\documentclass[10pt]{amsart}

\usepackage{ amsmath,amssymb,verbatim,graphicx,amsthm}
\usepackage[top=1in, bottom=1in, left=1in, right=1in]{geometry}
\usepackage{cite}
\raggedbottom
 
\usepackage[sc]{mathpazo}
\linespread{1.5}         
\usepackage[T1]{fontenc}

\newcommand{\vertiii}[1]{{\left\vert\kern-0.25ex\left\vert\kern-0.25ex\left\vert #1 
    \right\vert\kern-0.25ex\right\vert\kern-0.25ex\right\vert}}

\newcommand{\RR}{\mathbb{R}}

\newcommand{\ZZ}{\mathbb{Z}}
\newcommand{\QQ}{\mathbb{Q}}
\newcommand{\NN}{\mathbb{N}}

\newcommand{\la}{\left\langle}
\newcommand{\ra}{\right\rangle}

\newcommand{\expec}{\mathbb{E}}

\newcommand{\ind}{\mathbb{1}}
\newcommand{\Id}{\text{Id}}

\theoremstyle{plain}

\newtheorem{theorem}{Theorem}[section]

\newtheorem{lemma}[theorem]{Lemma}
\newtheorem{mydef}[theorem]{Definition}

\usepackage{setspace}
\begin{document}

\title{Pointwise characteristic factors for Wiener-Wintner double recurrence theorem}

\author{Idris Assani}
\address{Department of Mathematics, The University of North Carolina at Chapel Hill, 
Chapel Hill, NC 27599}
\email{assani@math.unc.edu}
\urladdr{http://www.unc.edu/math/Faculty/assani/} 

\author{David Duncan}
\address{Department of Mathematics \& Statistics, Coastal Carolina University, 
Conway, SC 29528}
\email{dduncan@coastal.edu}
\urladdr{http://www.coastal.edu/math/faculty/duncan.html} 

\author{Ryo Moore}
\address{Department of Mathematics, The University of North Carolina at Chapel Hill, 
Chapel Hill, NC 27599}
\email{ryom@live.unc.edu}

\begin{abstract}
In this paper, we extend Bourgain's double recurrence result to the Wiener-Wintner averages. Let $(X, \mathcal{F}, \mu, T)$ be a standard ergodic system. We will show that for any $f_1, f_2 \in L^\infty(X)$, the double recurrence Wiener-Wintner average
\[ \frac{1}{N} \sum_{n=1}^N f_1(T^{an}x)f_2(T^{bn}x) e^{2\pi i n t} \]
converges off a single null set of $X$ independent of $t$ as $N \to \infty$. Furthermore, we will show a uniform Wiener-Wintner double recurrence result: If either $f_1$ or $f_2$ belongs to the orthogonal complement of the Conze-Lesigne factor, then there exists a set of full measure such that the supremum on $t$ of the absolute value of the averages above converges to $0$.
\end{abstract}

\maketitle

\section{Historical Background}
In 1990, Bourgain proved the result on double recurrence \cite{BourgainDR}, which is stated as follows:
\begin{theorem}[Bourgain \cite{BourgainDR}]
Let $(X, \mathcal{F}, \mu, T)$ be an ergodic system, and $T_1$, $T_2$ be powers of $T$. Then, for $f_1, f_2 \in L^\infty(\mu)$,
\[ \lim_{N \to \infty} \frac{1}{N} \sum_{n=1}^N f_1(T_1^nx) f_2(T_2^nx) \]
converges for $\mu$-a.e. $x \in X$.
\end{theorem}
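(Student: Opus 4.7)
Write $A_N(f_1,f_2)(x) := \frac{1}{N}\sum_{n=1}^N f_1(T_1^n x) f_2(T_2^n x)$. I would follow the characteristic factor template of ergodic theory: identify the Kronecker factor $\mathcal{K}$ of $(X,T)$ as the factor governing the limit, reduce to $\mathcal{K}$, and compute directly there. The enabling tool is a bilinear maximal inequality
\[ \mu\bigl(\bigl\{x : \sup_{N\geq 1} |A_N(f_1,f_2)(x)| > \lambda\bigr\}\bigr) \leq C \frac{\|f_1\|_{L^\infty}\|f_2\|_{L^\infty}}{\lambda}, \]
which, via the Banach principle, upgrades a.e.\ convergence on a dense subclass of $L^\infty \times L^\infty$ to a.e.\ convergence on all of $L^\infty \times L^\infty$.

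\textbf{Kronecker reduction and direct computation.} Let $\mathcal{K}$ denote the Kronecker factor. First, if $\expec(f_i\mid\mathcal{K})=0$ for some $i$, then $\|A_N(f_1,f_2)\|_{L^2}\to 0$ by a van der Corput argument exploiting orthogonality to eigenfunctions; combined with the maximal inequality and a Banach principle argument on a suitable dense subclass, this gives $A_N(f_1,f_2)\to 0$ a.e. By multilinearity, the remaining case is $f_1, f_2 \in L^\infty(\mathcal{K})$. There $T$ acts on $\mathcal{K}$ as a rotation on a compact abelian group $Z$, and one approximates $f_1, f_2$ in $L^2$ by finite sums of characters $\chi$ of $Z$. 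For characters $\chi_1, \chi_2$, the average $A_N(\chi_1,\chi_2)(x)$ factors as $\chi_1(x)\chi_2(x) \cdot \frac{1}{N}\sum_{n=1}^N \gamma^n$ for some unimodular $\gamma$, whose convergence is elementary; the maximal inequality controls the approximation error.

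\textbf{Main obstacle.} The entire argument hinges on the bilinear maximal inequality, which is where essentially all of the technical difficulty lies. By Calder\'on's transference principle it suffices to bound the discrete bilinear maximal operator $\sup_N |\frac{1}{N}\sum_{n=1}^N u(k+a n)v(k+b n)|$ on $\ZZ$, where $T_1 = T^a$ and $T_2 = T^b$. Since the kernel is bilinear and signed, positivity arguments of Hardy--Littlewood--Wiener type are unavailable, and one is forced into a Fourier-analytic approach: dyadic decomposition in $N$ followed by a major/minor arc analysis of the Fourier transforms $\hat u, \hat v$ on the circle. The minor-arc bounds require Weyl-type exponential sum estimates, and controlling these uniformly across all scales is the genuine technical heart of Bourgain's original argument and the step where I would expect to spend essentially all of my effort.
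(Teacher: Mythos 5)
This statement is Bourgain's double recurrence theorem, which the paper does not prove: it is quoted from \cite{BourgainDR} and used as a black box throughout. So the only comparison available is with Bourgain's actual argument, and against that your proposal has a genuine gap at its load-bearing step. The bilinear maximal inequality you state, with $L^\infty$ norms on the right and weak-$L^1$ on the left, is trivially true on a probability space (the maximal function is pointwise bounded by $\|f_1\|_\infty\|f_2\|_\infty$) and for exactly that reason it cannot drive a Banach principle argument: your dense subclasses (finite sums of eigenfunctions, and the orthogonal complement of the Kronecker factor) are dense in $L^2$, not in $L^\infty$, so you would need a maximal bound with $\|f_1\|_2\|f_2\|_2$ on the right. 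That estimate is Lacey's bilinear maximal function theorem, proved a decade after Bourgain's paper and substantially deeper than the statement you are trying to establish; Bourgain neither proves nor uses it. Relatedly, the step ``$\expec(f_i\mid\mathcal{K})=0$ implies $\|A_N\|_{L^2}\to 0$, hence $A_N\to 0$ a.e.\ by the maximal inequality and Banach principle'' does not close: norm convergence plus a maximal inequality does not yield a.e.\ convergence, and the a.e.\ convergence on a dense subclass of $\mathcal{K}^\perp$ that the Banach principle requires is precisely the hard part you have deferred.

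What actually fills that hole in Bourgain's proof is the uniform Wiener--Wintner theorem (Theorem \ref{uniformWW} of this paper): for $f\in\mathcal{K}^\perp$ one has $\sup_t\bigl|\frac{1}{N}\sum_{n=1}^N f(T^nx)e^{2\pi int}\bigr|\to 0$ for a.e.\ $x$, off a null set independent of $t$, and a Fourier-side transference (van der Corput together with estimates of the type of inequality (\ref{cubes-variant})) dominates the bilinear average by such a supremum. This a.e.\ uniform-in-$t$ statement, not a bilinear maximal inequality, is the mechanism that kills the $\mathcal{K}^\perp$ part pointwise; it is also the template the present paper follows in Sections \ref{sec:CL} and \ref{sec:general}. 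Two smaller points: your computation on the Kronecker factor for pairs of characters is fine, but the approximation error there again needs an $L^2$-based maximal control, not an $L^\infty$ one; and for the linear orbits $an, bn$ the oscillatory kernel $\frac{1}{N}\sum_n e^{2\pi in\theta}$ is a geometric sum, so no Weyl exponential-sum estimates enter --- the genuine technical heart of Bourgain's harmonic analysis is the major-arc maximal and oscillation estimates (his entropy method), not minor arcs.
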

In \cite{BourgainDR}, the theorem was proven for the case $T_1 = T = T_2^{-1}$. Bourgain's proof relies on the uniform Wiener Wintner theorem, which is stated as follows (see, for example, \cite{AssaniWWET} for a proof):
\begin{theorem}\label{uniformWW}
Let $(X, \mathcal{F}, \mu, T)$ be an ergodic system, and let $f$ be a function in the orthogonal complement of the Kronecker factor of $(X, T)$. Then there exists a set of full measure $X_f$ such that
\[ \limsup_{N \to \infty} \sup_{t \in \RR} \left| \frac{1}{N} \sum_{n=1}^N f(T^nx)e^{2\pi i n t} \right| = 0 \]
for all $x \in X_f$.
\end{theorem}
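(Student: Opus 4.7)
The plan is to use van der Corput's inequality to absorb the $\sup_{t}$ into a $t$-independent bound, after which the remaining estimate is handled by Birkhoff's pointwise ergodic theorem and the Wiener lemma on continuous spectral measures.

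First, set $a_n(x,t) = f(T^nx)\,e^{2\pi int}$ and apply van der Corput's inequality: for integers $1 \le H \le N$ one obtains an estimate of the form
\[
\Big|\frac{1}{N}\sum_{n=1}^N a_n(x,t)\Big|^2 \le \frac{C\,\|f\|_\infty^2}{H} + \frac{2}{H}\sum_{h=1}^{H}\Big|\frac{1}{N}\sum_{n=1}^{N-h} a_{n+h}(x,t)\,\overline{a_n(x,t)}\Big|.
\]
The crucial identity is $a_{n+h}(x,t)\,\overline{a_n(x,t)} = f(T^{n+h}x)\,\overline{f(T^nx)}\,e^{2\pi iht}$, whose modulus is independent of $t$. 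Hence the entire right-hand side is free of $t$, and we may legitimately take $\sup_{t\in\RR}$ on the left without altering the inequality.

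Second, fix $h \ge 1$ and apply Birkhoff's pointwise ergodic theorem to the bounded function $\overline{f}\cdot(f\circ T^h)$: for almost every $x$,
\[
\lim_{N\to\infty}\frac{1}{N}\sum_{n=1}^{N-h} f(T^{n+h}x)\,\overline{f(T^nx)} = \int f\cdot\overline{f\circ T^h}\,d\mu =: c_h.
\]
Intersecting the associated full-measure sets over the countable family $h \in \NN$ produces a single set $X_f$ of full measure. On $X_f$, for every fixed $H$,
\[
\limsup_{N\to\infty}\,\sup_{t\in\RR}\Big|\frac{1}{N}\sum_{n=1}^N f(T^nx)\,e^{2\pi int}\Big|^2 \le \frac{C\,\|f\|_\infty^2}{H} + \frac{2}{H}\sum_{h=1}^H |c_h|.
\]

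Third, recognize $c_h$ as the $h$-th Fourier coefficient of the spectral measure $\sigma_f$ of $f$. Because $f$ is orthogonal to the Kronecker factor (the closed span of eigenfunctions of the Koopman operator), $\sigma_f$ has no atoms. Wiener's lemma then yields $\frac{1}{H}\sum_{h=1}^H |c_h|^2 \to 0$, and Cauchy--Schwarz promotes this to $\frac{1}{H}\sum_{h=1}^H |c_h| \to 0$. Letting $H\to\infty$ in the displayed bound forces the $\limsup$ to be $0$ on $X_f$, as required.

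The conceptual heart of the argument is the van der Corput step: it is what makes a supremum over an uncountable family of characters $e^{2\pi int}$ amenable to a.e.\ analysis, since the resulting correlations depend only on $h$, not on $t$. The subsequent steps are standard, the only bookkeeping being that the Birkhoff null sets are controlled for countably many $h$ and that continuity of $\sigma_f$ is precisely the spectral translation of $f\perp$ Kronecker.
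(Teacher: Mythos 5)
Your proof is correct and follows the standard argument: the paper itself only cites this theorem (from \cite{AssaniWWET}), but your route --- van der Corput to remove the supremum over $t$, Birkhoff over countably many $h$, then Wiener's lemma on the continuous spectral measure $\sigma_f$ --- is exactly the machinery the paper packages as inequality (\ref{vcd-ww-lim}) and deploys in its own proof of the generalization Theorem \ref{UniformWWPower} (which reduces to your argument when $a=1$, with the conditional expectation on $T^a$-invariant sets collapsing to $\hat{\sigma}_f(h)$ by ergodicity). The only cosmetic gap is that you do not explicitly note that replacing the truncated sum $\sum_{n=1}^{N-h}$ by $\sum_{n=1}^{N}$ costs $O(h/N)\to 0$, a point the paper handles via its identity (\ref{sameLimit}); this does not affect correctness.
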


In 2001, the second author worked on an extended result of Bourgain in his Ph. D. thesis \cite{duncan}, and proved the double recurrence Wiener Wintner result for the case when $T$ is totally ergodic (i.e. $T^a$ is ergodic for any $a \in \ZZ$).
\begin{theorem}\label{Duncan}Let $(X, \mathcal{F}, \mu, T)$ be a standard ergodic dynamical system (i.e. $X$ is a compact metrizable space, $\mathcal{F}$ is a Borelian sigma-algebra, $\mu$ is a probability Borel measure, and $T$ is a self-homeomorphism). Suppose $f_1$ and $f_2$ belong to $L^2(X)$. Let $\mathcal{CL}$ be the maximal isometric extension of the Kronecker factor of $T$. Let 
\[W_N(f_1, f_2,x ,t) = \frac{1}{N} \sum_{n=0}^{N-1} f_1(T^{an}x)f_2(T^{bn}x) e^{2\pi i n t} \]
\begin{enumerate}
\item (Double Uniform Wiener-Wintner Theorem) If either $f_1$ or $f_2$ belongs to $\mathcal{CL}^\perp$, then there exists a set of full measure $X_{f_1 \otimes f_2}$ such that for all $x \in X_{f_1 \otimes f_2}$,
\[ \limsup_{N \to \infty} \sup_{t \in \RR} \left|W_N(f_1, f_2, x, t) \right| = 0 \]
\item (General Convergence) If $f_1, f_2 \in \mathcal{CL}$, then $W_N(f_1, f_2, x, t)$ converges for $\mu$-a.e. $x \in X$ for all $t \in \RR$, provided that the cocycle associated with $\mathcal{CL}$ is affine.
\end{enumerate}
\end{theorem}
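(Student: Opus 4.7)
My plan is to treat the two parts separately, since Part~(1) is a van der Corput / characteristic-factor argument while Part~(2) is an explicit Fourier computation on the isometric extension. For Part~(1), I would set $a_n = f_1(T^{an}x) f_2(T^{bn}x) e^{2\pi i n t}$ and apply van der Corput's inequality. The factor $e^{-2\pi i h t}$ pulls out of the inner sum $\sum_n a_n \overline{a_{n+h}}$, so taking $\sup_t$ eliminates $t$ from the resulting bound: for every $H \geq 1$,
\[ \sup_t |W_N(f_1,f_2,x,t)|^2 \leq \frac{\|f_1\|_\infty^2 \|f_2\|_\infty^2}{H+1} + \frac{2}{H+1}\sum_{h=1}^H |A_N(h,x)| + o_N(1), \]
where
\[ A_N(h,x) = \frac{1}{N}\sum_{n=0}^{N-1} f_1(T^{an}x)\overline{f_1(T^{a(n+h)}x)} \cdot f_2(T^{bn}x)\overline{f_2(T^{b(n+h)}x)}. \]
For each fixed $h$, Bourgain's double recurrence theorem applied to $f_1 \cdot \overline{f_1 \circ T^{ah}}$ and $f_2 \cdot \overline{f_2 \circ T^{bh}}$ yields a.e. convergence of $A_N(h,\cdot)$ to some $L_h \in L^1(X)$. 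Letting $N \to \infty$ and then $H \to \infty$ reduces the claim to showing $\frac{1}{H}\sum_{h \leq H}|L_h(x)| \to 0$ a.e.\ whenever $f_1$ or $f_2$ lies in $\mathcal{CL}^\perp$. Integrating, this is the vanishing of a Host--Kra $U_3$-type seminorm, precisely the defining property of $\mathcal{CL}^\perp$; the a.e.\ upgrade, which is the main obstacle, should follow from a maximal inequality or a density argument via a countable dense subset of $\mathcal{CL}^\perp \cap L^\infty$.

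For Part~(2), I would use the explicit structure of $\mathcal{CL}$ as an isometric extension of the Kronecker factor $\mathcal{K}$. Realize $\mathcal{K}$ as a rotation $y \mapsto y + \alpha$ on a compact abelian group $Y$ and $\mathcal{CL}$ as the skew product $S(y,z) = (y + \alpha,\, z + \phi(y))$ on $Y \times Z$, with $\phi$ an \emph{affine} cocycle. The iterated cocycle $\Phi_n(y) = \sum_{k=0}^{n-1} \phi(y + k\alpha)$ is then a polynomial of degree $\leq 2$ in $n$ (for each $y$). Fourier-decomposing $f_i(y,z) = \sum_\chi \hat{f}_i^\chi(y)\chi(z)$ along the $Z$-fibers reduces $W_N$ to a double sum, indexed by pairs $(\chi_1,\chi_2)$ of characters of $Z$, of averages
\[ \frac{1}{N}\sum_{n=0}^{N-1} \hat{f}_1^{\chi_1}(y+an\alpha)\, \hat{f}_2^{\chi_2}(y+bn\alpha)\, e^{2\pi i Q_{\chi_1,\chi_2}(y,n) + 2\pi i n t}, \]
where $Q_{\chi_1,\chi_2}(y,n)$ is polynomial of degree at most $2$ in $n$. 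Pointwise convergence for a.e.\ $y$ and every $t$ then follows from a polynomial Wiener--Wintner argument combined with Weyl equidistribution; total ergodicity is what guarantees the equidistribution of the relevant polynomial orbits.

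I expect the main obstacle to be in Part~(1): converting the mean vanishing of $\frac{1}{H}\sum_h |L_h|$ into an almost-sure statement uniformly in $t$. All remaining steps --- van der Corput, application of Bourgain's theorem, Fourier decomposition on the isometric extension, and Weyl-sum analysis --- are standard in principle, though the bookkeeping for the two-variable polynomial $Q_{\chi_1,\chi_2}(y,n)$ in Part~(2) will require some care.
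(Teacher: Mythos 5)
The paper does not actually prove Theorem \ref{Duncan}: it is quoted from \cite{duncan} with a one-paragraph sketch, and the body of the paper proves the stronger Theorem \ref{MainResult}. So the comparison below is against that sketch and against the paper's proofs of the analogous statements (Theorems \ref{CL}, \ref{generalCL}, \ref{BothInZ2}).

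Your Part (1) has the right skeleton --- van der Corput in $t$, Bourgain's double recurrence theorem for the $h$-averages, a degree-three seminorm at the end --- and this is essentially the first proof of Theorem \ref{CL}. But you have located the ``main obstacle'' in the wrong place, and the step you defer is the one that cannot be deferred. The passage from mean vanishing to a.e.\ vanishing needs no maximal inequality, and the density argument you suggest over a countable dense subset of $\mathcal{CL}^\perp \cap L^\infty$ would itself require a maximal inequality for $\sup_t$ of the double average to control the error term, so as written that route is circular. The paper instead arranges the order of operations so the upgrade is automatic: since Bourgain's theorem makes each $\limsup_N |A_N(h,\cdot)|$ an honest limit a.e., one integrates the pointwise inequality
\[ \limsup_{N} \sup_t |W_N|^2 \leq \frac{C}{H} + \frac{C}{H}\sum_{h=1}^H \lim_{N} |A_N(h,x)| \]
directly (reverse Fatou plus dominated convergence), applies a \emph{second} van der Corput and the mean ergodic theorem to the $A_N(h,\cdot)$ averages --- a step your sketch compresses into the single word ``Integrating'' --- and obtains $\int \limsup_N \sup_t |W_N|^2\, d\mu \leq C \vertiii{f_1}_3^2 = 0$. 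Since the integrand is non-negative, the limsup vanishes a.e., and no further argument is needed. Two smaller points: the vanishing of the $U^3$-type seminorm is the defining property of $\mathcal{Z}_2^\perp$, not of $\mathcal{CL}^\perp$; since $\mathcal{CL}^\perp \subset \mathcal{Z}_2^\perp$ this is harmless (you would in fact be proving the stronger Theorem \ref{MainResult}(1)), but it is an implication, not an equivalence. Also, when $T^{b-a}$ is not ergodic the mean-ergodic-theorem step yields a conditional expectation on $T^{b-a}$-invariant sets rather than a product of integrals; under Duncan's total ergodicity hypothesis this does not arise, but it is exactly why the paper needs the kernel lemma of Section \ref{sec:general}.

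For Part (2), your fiberwise Fourier decomposition with quadratic Weyl phases is consistent with the paper's description of Duncan's argument: the affine cocycle is used precisely for the homomorphism property that makes $\Phi_n$ quadratic in $n$, and total ergodicity is used to identify the $\mathcal{CL}$ factors of $T^a$ and $T^b$ so that both functions live over the same skew product --- a reduction you omit but which the paper's sketch flags explicitly. The paper's own route to the stronger statement is genuinely different: it realizes $\mathcal{Z}_2$ as an inverse limit of two-step nilsystems, absorbs $e^{2\pi i n t}$ into a character $f_t$ of the Kronecker factor, and invokes Leibman's pointwise theorem for polynomial orbits on nilmanifolds. That route avoids the two issues your sketch leaves open: the interchange of the infinite sum over character pairs $(\chi_1,\chi_2)$ with the limit in $N$, and the passage from ``for each $t$, for a.e.\ $x$'' to ``for a.e.\ $x$, for all $t$,'' neither of which is automatic in your setup.
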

Theorem $\ref{Duncan}$ was proved in several stages. For (1), first one identifies the pointwise limit of the double recurrence averages as an integral with respect to a particular Borel measure (disintegration). Then one uses Wiener's lemma on the continuity of spectral measures and van der Corput's inequality to show that the double recurrence average converges to $0$. For the second part, one first shows that the total ergodicity of $T$ asserts that $\mathcal{CL}$ for every integer power of $T$ are the same, which allows one to assume that both functions lie in the same factor of $L^2(X, \mu)$. Furthermore, the assumption that the measurable cocycle associated with $\mathcal{CL}$ is affine allows one to use the homomorphism property to simplify the computations. 

A little was known about characteristic factors back then, especially for pointwise convergence. Originaly in \cite{duncan}, the factor $\mathcal{CL}$ is referred to as "Conze-Lesigne" factor, as they first appeared in series of work by J.-P. Conze and E. Lesigne (see, for example, \cite{CL84, CL87} for details), and named so by D. Rudolph \cite{Rudolph}. But with the work of B. Host and B. Kra in \cite{HostKraNEA}, the definition of the Conze-Lesigne facter has been updated when the Host-Kra-Ziegler factors emerged in 2005. It is noted that the updated Conze-Lesigne factor $\mathcal{Z}_2$, the second Host-Kra-Ziegler factor, is smaller than $\mathcal{CL}$, so more work is needed to prove the uniform double recurrence Wiener-Wintner theorem for the case either $f_1, f_2 \in \mathcal{Z}_2^\perp$ since $  \mathcal{CL}^\perp \subset \mathcal{Z}_2^\perp$.

\section{Introduction}
In this paper, we will prove the uniform Wiener-Wintner result for the case $f_1 \in \mathcal{Z}_2^\perp$ using the seminorms that characterize these related factors. These characteristic factors and seminorms were developed in the work of B. Host and B. Kra \cite{HostKraNEA} (these characteristic factors were also developed independently by T. Ziegler \cite{Ziegler} without the use of seminorms). 

\begin{mydef}\label{HostKraSeminorms} Let $(X, \mathcal{F}, \mu, T)$ be an ergodic dynamical system on a probability measure space. The factors $\mathcal{Z}_k$ are defined in terms of seminorms as follows.
\begin{itemize}
	\item The factor $\mathcal{Z}_0$ is the trivial $\sigma$-algebra.
	\item The factor $\mathcal{Z}_1$ can be characterized by the seminorm $\vertiii \cdot_2$ where
	\[ \vertiii f _2^4 = \lim_{H \to \infty} \frac{1}{H} \sum_{h=1}^H \left| \int f \cdot f \circ T^h \, d\mu \right|^2 \]
	i.e. A function $f$ belongs to $\mathcal{Z}_1^\perp$ if and only if $\vertiii{ f }_2 = 0$.
	\item The factor $\mathcal{Z}_2$ is the Conze-Lesigne factor. Functions in this factor are characterized by the seminorm $\vertiii \cdot _3$ such that
	\[\vertiii f _3^8 = \lim_{H \to \infty} \frac{1}{H} \sum_{h=1}^H \vertiii{ f \cdot f \circ T^h }^4_2 \: , \]
	i.e. A function $f$ belongs to $\mathcal{Z}_2^\perp$ if and only if $\vertiii f _3 = 0$.
	\item More generally, for each positive integer $k$, we have
	\[ \vertiii f _{k+1}^{2^{k+1}} = \lim_{H \to \infty} \frac{1}{H} \sum_{h=1}^H \vertiii{ f \cdot f \circ T^h }_k^{2^k} \]
	with the condition that $f$ belongs to $\mathcal{Z}_{k-1}^\perp$ if and only if $\vertiii f _k = 0$.
\end{itemize}
\end{mydef}

Note that these are similar to the seminorms introduced by W. T. Gowers in \cite{Gowers}. In parcitular, $\displaystyle \| f \|_{U^k(X)}^{2^k} = \vertiii{ f }_{k}^{2^k}$. In this paper, we chose use the notation $\vertiii{ \cdot }_{k}^{2^k}$ merely for the sake of readability.

In 2012, the first author and K. Presser published an update \cite{AssaniPresser} of their earlier unpublished work \cite{AssaniPresser03} on characteristic factors and the multiterm return times theorem. 
\begin{mydef}
Let $(X, \mathcal{F}, \mu, T)$ be an ergodic dynamical system on a probability measure space. We define factors $\mathcal{A}_k$ in the following inductive way.

\begin{itemize}
	\item The factor $\mathcal{A}_0$ is the trivial $\sigma$-algebra $\left\{ X, \emptyset \right\}$.
	
	\item The factor $\mathcal{A}_1$ is the Kronecker factor of $T$. We denote $N_1(f) = \| \expec(f | \mathcal{A}_1) \|_2$.
	
	\item For $k \geq 1$, the factor $\mathcal{A}_{k+1}$ is characterized by the following: A function $f \in \mathcal{A}^\perp_{k+1}$ if and only if
	\[ N_{k+1}(f)^4 := \lim_H \frac{1}{H} \sum_{h=1}^H \| \expec(f \cdot f \circ T^h | \mathcal{A}_k) \|_2^2 = 0. \]
\end{itemize}
\end{mydef}
It was proven that the quantities $N_k(f)$ are well-defined in \cite{AssaniCF}, and they characterize factors $\mathcal{A}_k$ of $T$ which are successive maximal isometric extensions. These successive factors turned out to be the $k$-step distal factors introduced by H. Furstenberg in \cite{FurstenbergDistal}.

In \cite{AssaniPresser}, it was shown that given an ergodic system $(X, \mathcal{F}, \mu, T)$ and $f_1 \in L^\infty(\mu)$, there exists a set of full measure $X_f$ such that for any $x \in X_f$ and for any measure-preserving system $(Y, \mathcal{G}, \nu, S)$ and $f_2 \in L^\infty(\nu)$ such that $\| f_2 \|_{L^\infty(\nu)} \leq 1$, the average
\[ \limsup_{N \to \infty} \sup_{t \in \RR} \left| \frac{1}{N} \sum_{n=1}^N f_1(T^n x) f_2(S^ny) e^{2\pi i n t} \right| \leq CN_3(f_1)^2 \]
converges for $\nu$-a.e. $y \in Y$ for some absolute constant $C$ independent of $f_1, f_2, S$, and $y$.

It is known that $\mathcal{Z}_k \subset \mathcal{A}_k$ (in fact, $\mathcal{Z}_0$ equals $\mathcal{A}_0$ and $\mathcal{Z}_1$ equals $\mathcal{A}_1$, but $\mathcal{Z}_2 \subsetneq \mathcal{A}_2$), so $\mathcal{Z}_k^\perp \supset \mathcal{A}_k^\perp$. In \cite{AssaniPresser}, it was proven that $\mathcal{Z}_k$ and $\mathcal{A}_k$ are both pointwise characteristic for the $k$-term return times averages.

In this paper, we will update Theorem $\ref{Duncan}$ in the following ways:
\begin{itemize}
	\item We will only assume that $T$ is ergodic, rather than totally ergodic.
	\item We will show that $\mathcal{Z}_2$ (and $\mathcal{A}_2$) is a characteristic factor for this Wiener-Wintner average, i.e. We will prove the uniform double Wiener-Wintner result for the case either $f_1 \in \mathcal{Z}_2^\perp$ or $f_2 \in \mathcal{Z}_2^\perp$ rather than $\mathcal{CL}^\perp$.
	\item We will show that the convergence holds in general for case $f_1, f_2 \in \mathcal{Z}_2$.
\end{itemize}
In other words, we will prove the following:
\begin{theorem}\label{MainResult}Let $(X, \mathcal{F}, \mu, T)$ be a standard ergodic dynamical system, and $f_1, f_2 \in L^2(X)$. Let 
\[W_N(f_1, f_2,x ,t) = \frac{1}{N} \sum_{n=0}^{N-1} f_1(T^{an}x)f_2(T^{bn}x) e^{2\pi i n t}. \]
\begin{enumerate}
\item (Double Uniform Wiener-Wintner Theorem) If either $f_1$ or $f_2$ belongs to $\mathcal{Z}_2^\perp$, then there exists a set of full measure $X_{f_1 \otimes f_2}$ such that for all $x \in X_{f_1 \otimes f_2}$,
\begin{equation*}\label{UniformDWW} \limsup_{N \to \infty} \sup_{t \in \RR} \left|W_N(f_1, f_2, x, t) \right| = 0 \end{equation*}
\item (General Convergence) If $f_1, f_2 \in \mathcal{Z}_2$, then for $\mu$-a.e. $x \in X$, $W_N(f_1, f_2, x, t)$ converges for all $t \in \RR$.
\end{enumerate}
\end{theorem}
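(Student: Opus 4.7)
Setting $a_n := f_1(T^{an}x)f_2(T^{bn}x)$, van der Corput's inequality applied to $a_n e^{2\pi int}$, together with the elementary observation $|e^{2\pi iht}|=1$, produces the $t$-uniform bound
\[
\sup_{t\in\RR}|W_N(f_1,f_2,x,t)|^2 \;\leq\; \frac{C}{H} + \frac{C}{H}\sum_{h=1}^{H} \left|\frac{1}{N}\sum_{n=0}^{N-1} F_{1,h}(T^{an}x)\,F_{2,h}(T^{bn}x)\right|,
\]
where $F_{1,h} := f_1\cdot\overline{f_1\circ T^{ah}}$ and $F_{2,h} := f_2\cdot\overline{f_2\circ T^{bh}}$. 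For each fixed $h\in\ZZ$, Bourgain's pointwise double recurrence theorem produces an almost sure limit $L_h(x)$ of the inner average; intersecting the countably many null sets yields a single full-measure set on which every $L_h$ exists. On this set,
\[
\limsup_{N\to\infty}\sup_{t\in\RR}|W_N|^2 \;\leq\; \frac{C}{H} + \frac{C}{H}\sum_{h=1}^{H}|L_h(x)|.
\]

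\textbf{Closing Part (1).} By Cauchy-Schwarz it suffices to show $\tfrac{1}{H}\sum_{h\leq H}|L_h(x)|^2 \to 0$ almost everywhere. Since the Kronecker factor is characteristic in $L^2$ for two-term Bourgain averages, one obtains a bound of the form $\|L_h\|_2^2 \leq C\,\vertiii{F_{1,h}}_2^{\,2}\vertiii{F_{2,h}}_2^{\,2}$; averaging in $h$ and applying Cauchy-Schwarz,
\[
\frac{1}{H}\sum_{h=1}^H \|L_h\|_2^2 \;\leq\; C\Biggl(\frac{1}{H}\sum_{h=1}^H\vertiii{F_{1,h}}_2^{\,4}\Biggr)^{\!1/2}\Biggl(\frac{1}{H}\sum_{h=1}^H\vertiii{F_{2,h}}_2^{\,4}\Biggr)^{\!1/2}.
\]
The second factor is uniformly bounded (since $\vertiii{F_{2,h}}_2\leq\|F_{2,h}\|_2\leq\|f_2\|_\infty^2$), and by the seminorm recursion in Definition~\ref{HostKraSeminorms} together with $\vertiii{f_1}_3=0$, the first factor tends to zero. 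Hence $\tfrac{1}{H}\sum\|L_h\|_2^2 \to 0$, and extracting a rapidly growing subsequence $(H_k)$ together with Borel-Cantelli upgrades this $L^1$-decay to $\tfrac{1}{H_k}\sum_{h\leq H_k}|L_h(x)|^2\to 0$ for $\mu$-almost every $x$. Taking the infimum over $k$ in the van der Corput estimate yields $\limsup_N\sup_t|W_N|=0$ almost everywhere.

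\textbf{Part (2) and main obstacles.} For part (2) we exploit the Host-Kra structure theorem, which identifies $\mathcal{Z}_2$ as an inverse limit of $2$-step pronilfactors. For $f_1,f_2$ continuous on such a nilfactor, the sequences $n\mapsto f_i(T^{a_i n}x)$ are $2$-step nilsequences, and Leibman's equidistribution theorem for polynomial orbits on nilmanifolds yields pointwise convergence of $W_N(f_1,f_2,x,t)$ for every $t$ at $\mu$-a.e. $x$. Passing from this dense subclass to all of $L^2(\mathcal{Z}_2)$ is then a Banach-principle argument using a maximal inequality for the double recurrence operator. The principal technical obstacle in part (1) is securing the $L^2$-bound $\|L_h\|_2^2 \leq C\vertiii{F_{1,h}}_2^{\,2}\vertiii{F_{2,h}}_2^{\,2}$ in a form compatible with the seminorm recursion \emph{without assuming total ergodicity of $T$}, since $T^a$ and $T^b$ need not be ergodic and the finer spectral structure on the Kronecker factor must be tracked through their ergodic decompositions. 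In part (2), the parallel subtlety is that a naive $L^2$-approximation of $f_i\in\mathcal{Z}_2$ by nilfactor functions does not place the error in $\mathcal{Z}_2^\perp$, so part (1) cannot be invoked directly and must be replaced by a suitable maximal-inequality estimate.
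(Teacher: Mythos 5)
Your outline reproduces the paper's opening move correctly: the van der Corput reduction to the $h$-indexed Bourgain averages, the use of Bourgain's double recurrence theorem to define $L_h(x)$ off a single null set, and the plan to kill $\frac{1}{H}\sum_{h\le H}|L_h(x)|$ via the seminorm recursion $\vertiii{f_1}_3^8=\lim_H\frac1H\sum_h\vertiii{f_1\cdot f_1\circ T^h}_2^4$. (Your Borel--Cantelli subsequence step at the end is valid but unnecessary: since $\limsup_N\sup_t|W_N|^2$ does not depend on $H$, one simply integrates it, bounds the integral by a quantity tending to $C\vertiii{f_1}_3^2=0$, and concludes it vanishes a.e.) The genuine gap is the estimate you label the ``principal technical obstacle'' and then do not supply: the bound $\|L_h\|_2^2\le C\,\vertiii{F_{1,h}}_2^2$ (your product form is not needed and is not what one gets). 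This is not a citable black box; it is where essentially all the work of the paper lives. After van der Corput and the mean ergodic theorem, the relevant integral is $\int (F_{1,h}\cdot F_{1,h}\circ T^{ak})\,\expec(F_{2,h}\cdot F_{2,h}\circ T^{bk}\,|\,\mathcal I_{b-a})\,d\mu$, where $\mathcal I_{b-a}$ is the algebra of $T^{b-a}$-invariant sets. When $T^{b-a}$ is not ergodic this does not factor into a product of integrals, and the naive spectral argument breaks. The paper resolves this with a chain of auxiliary results you would need analogues of: the integral-kernel representation of $\expec(\cdot|\mathcal I_{b-a})$ (Lemma \ref{integralKernel}), the transfer of spectral continuity and of the seminorm recursion to powers $T^a$ (Theorem \ref{UniformWWPower} and Lemma \ref{HKPower} --- note also that your ``first factor'' involves $f_1\cdot f_1\circ T^{ah}$ rather than $f_1\cdot f_1\circ T^{h}$, so even identifying its limit with $\vertiii{f_1}_3^8$ requires Lemma \ref{HKPower}), and finally the cubes inequality (\ref{cubes-variant}) applied on the product system $(X^2,\mu\otimes\mu, T^a\times T^b)$ together with Lemma \ref{doubleFctnWW}. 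The paper in fact never proves a per-$h$ bound on $\|L_h\|_2$ in the general case; it averages jointly over $h$ and $k$ and controls the resulting cube average by a Wiener--Wintner supremum on the product system. Without some version of this machinery your Part (1) does not close.

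On Part (2) your sketch is closer to the paper's actual argument (Host--Kra structure theorem plus Leibman's polynomial-orbit convergence), and realizing $e^{2\pi int}$ inside the nilsystem framework directly is a reasonable alternative to the paper's split into rational $t$ (countable union of Bourgain null sets on a product with a rotation) and irrational $t$ (extension of the character $\beta\mapsto e^{2\pi it}$ from the dense cyclic subgroup to the Kronecker factor). Note, however, that the paper's Theorem \ref{BothInZ2} is also stated only for continuous $f_1,f_2$ on the nilsystem; the maximal-inequality/Banach-principle passage to general $f_i\in\mathcal Z_2$ that you flag is likewise not carried out there, so on that point you are no worse off than the source, but you should not present it as routine.
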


We will use Bourgain's double recurrence theorem and the seminorms mentioned above to prove (1). Some useful inequalities are introduced in section $\ref{sec:Inequalities}$. We will first show that (1) holds for the case $f_1 \in \mathcal{Z}_2^\perp$ when  $a = 1$ and $b = 2$ in section $\ref{sec:CL}$. Complication arises when $|b - a| > 1$, and we will prove the case for general $a, b \in \ZZ$ in section $\ref{sec:general}$. In section $\ref{sec:maxIsom}$, we will prove that there is a pointwise estimate for the limit of the double recurrence Wiener Wintner average using  for the case $a = 1$ and $b = 2$ using the seminorm $N_2( \cdot )$. Finally, in section $\ref{sec:CVinZ2}$, we will show (2) of Theorem $\ref{MainResult}$ using Leibman's convergence result in  \cite{Leibman}.

Throughout this paper, we will without loss of generality assume that the functions $f_1$ and $f_2$ are real-valued, and $\|f_1\|_\infty, \|f_2
\|_\infty \leq 1$, unless specified otherwise. Note that this implies that for any sub-sigma algebra $\mathcal{G}$ of $\mathcal{F}$, $\|\expec(f_i | \mathcal{G})\|_\infty \leq 1$ for both $i = 1, 2$.

\section{Some Inequalities}\label{sec:Inequalities}
Throughout this paper, we will refer to the following inequalities repetitively. The proofs are given in the appendix.

One of the key ingredients of our proofs is the van der Corput's inequality, which is the following.

\begin{lemma}[van der Corput] \label{lem-vdc}
If $(a_n)$ is a sequence of complex numbers and if $H$ is an integer between $0$ and $N-1$, then
\begin{align}\label{vdc}
\left| \frac{1}{N} \sum_{n=0}^{N-1} a_n \right|^2 
&\leq \frac{N+H}{N^2(H+1)} \sum_{n=0}^{N-1} |a_n|^2 \\
&+ \frac{2(N+H)}{N^2(H+1)^2} \sum_{h=1}^H (H + 1 - h) \Re \left( \sum_{n=0}^{N-h-1} a_n \overline{a}_{n+h} \right). \nonumber
\end{align}
\end{lemma}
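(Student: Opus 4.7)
The plan is to prove Lemma~\ref{lem-vdc} by the standard doubling trick: expressing $\sum_n a_n$ as a repeated average of overlapping blocks of length $H+1$, applying Cauchy--Schwarz to obtain a quadratic expression, and then collecting pairs by their shift.

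First I would extend the sequence by setting $a_n = 0$ for $n \notin \{0, 1, \dots, N-1\}$, so that all sums below may be written without worrying about endpoints. The key identity is
\[ (H+1) \sum_{n=0}^{N-1} a_n = \sum_{m=-H}^{N-1} \sum_{h=0}^{H} a_{m+h}, \]
since each $a_n$ is counted exactly $H+1$ times on the right. Because the outer index $m$ ranges over $N+H$ values, Cauchy--Schwarz yields
\[ (H+1)^2 \left| \sum_{n=0}^{N-1} a_n \right|^2 \leq (N+H) \sum_{m=-H}^{N-1} \left| \sum_{h=0}^{H} a_{m+h} \right|^2. \]

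Next I would expand the inner square as $\sum_{h,h'=0}^{H} a_{m+h}\overline{a_{m+h'}}$, swap the order of summation, and group the resulting terms according to the shift $k = h'-h$. The diagonal contribution ($h=h'$) produces $(H+1)\sum_{n=0}^{N-1} |a_n|^2$, while for each $k \in \{1,\dots,H\}$ there are exactly $H+1-k$ pairs $(h,h')$ giving that shift, and combining the contributions from $k$ and $-k$ produces a factor of $2\,\Re$. After relabeling the inner variable and using that $a_n$ vanishes outside $[0,N-1]$, each shifted sum becomes $\sum_{n=0}^{N-k-1} a_n \overline{a_{n+k}}$. Dividing through by $N^2(H+1)^2$ gives exactly the stated inequality.

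The only place that requires any care is the bookkeeping in the second step: one must verify both that the range $-H \leq m \leq N-1$ is correct and that the count $H+1-k$ of pairs with shift $k$ is accurate. This is a short combinatorial check rather than a real obstacle, so I expect the proof to be essentially a careful accounting with no analytical subtlety.
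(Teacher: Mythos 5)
Your argument is correct and complete: the extension-by-zero, the counting identity $(H+1)\sum_n a_n=\sum_{m=-H}^{N-1}\sum_{h=0}^H a_{m+h}$ over $N+H$ values of $m$, the Cauchy--Schwarz step, and the bookkeeping of $H+1-k$ pairs per shift $k$ all check out and yield exactly the stated constants. The paper itself does not prove this lemma but only cites Kuipers--Niederreiter, and your proof is precisely the classical argument found there, so there is nothing to compare beyond noting that you have supplied the standard proof the paper omits.
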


The following inequalities can be derived directly from ($\ref{vdc}$).

\begin{lemma}\label{lem-vcd-ww}
\begin{itemize}
\item There exists an absolute constant $C$ such that for any sequence of complex numbers $( a_n )$ such that $\sup_n|a_n| \leq 1$ and any positive integer N, we have
\begin{equation}\label{vcd-lim}
\limsup_{N \to \infty} \left|\frac{1}{N} \sum_{n=1}^N a_n \right|^2 \leq \frac{C}{H} + \frac{C}{(H+1)^2} \sum_{h=1}^H (H + 1 - h) \Re \left( \limsup_{N \to \infty} \frac{1}{N} \sum_{n=1}^{N} a_n \overline{a}_{n+h} \right)
\end{equation}
for any $H \in \NN$.
\item There exists an absolute constant $C$ such that for any sequence of complex numbers $( a_n )$ such that $\sup_n|a_n| \leq 1$ and any positive integer N, we have
\begin{equation}\label{vcd-ww} \sup_{t \in \RR} \left| \frac{1}{N} \sum_{n=1}^N a_n e^{2\pi i n t} \right|^2 \leq \frac{C}{H} + \frac{C}{H} \sum_{h=1}^H \left| \frac{1}{N} \sum_{n=1}^{N-h} a_n \overline{a}_{n+h} \right| \end{equation}
for $1 \leq H \leq N$.
\item There exists an absolute constant $C$ such that for any sequence of complex numbers $( a_n )$ such that $\sup_n|a_n| \leq 1$ and any positive integer N, we have
\begin{equation}\label{vcd-ww-lim} \limsup_{N \to \infty} \sup_{t \in \RR} \left| \frac{1}{N} \sum_{n=1}^N a_n e^{2\pi i n t} \right|^2 \leq \frac{C}{H} +  \frac{C}{H} \sum_{h=1}^H  \limsup_{N \to \infty} \left| \frac{1}{N} \sum_{n=1}^{N} a_n \overline{a}_{n+h} \right|
\end{equation}
for all $H \in \NN$.
\end{itemize}
\end{lemma}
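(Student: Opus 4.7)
The plan is to derive all three inequalities as immediate corollaries of Lemma \ref{lem-vdc} by plugging in an appropriate auxiliary sequence and then performing the relevant limiting operation (taking $\limsup_{N\to\infty}$, taking $\sup_{t\in\RR}$, or both). In every case the first term of van der Corput yields the $C/H$ piece because $|a_n|\leq 1$ gives $\sum_{n=0}^{N-1}|a_n|^2\leq N$, so
\begin{equation*}
\frac{N+H}{N^2(H+1)}\sum_{n=0}^{N-1}|a_n|^2 \leq \frac{N+H}{N(H+1)} \leq \frac{2}{H+1}
\end{equation*}
once $H\leq N-1$. All of the content is in massaging the weighted correlation sum into the cleaner form appearing on the right-hand sides.

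For \eqref{vcd-lim}, I would apply Lemma \ref{lem-vdc} directly to $(a_n)$ and then take $\limsup_{N\to\infty}$. Since $\sum_{h=1}^{H}$ is a finite sum with nonnegative weights $(H+1-h)$, the limsup passes through termwise; the prefactor $\frac{2(N+H)}{N(H+1)^2}$ tends to $\frac{2}{(H+1)^2}$; and replacing $\frac{1}{N}\sum_{n=0}^{N-h-1}$ by $\frac{1}{N}\sum_{n=1}^{N}$ only costs $O(h/N)$ for each fixed $h\leq H$, which vanishes in the limit. Absorbing the $(H+1-h)/(H+1)$ weight into the constant yields the stated bound.

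For \eqref{vcd-ww}, I would apply Lemma \ref{lem-vdc} to the sequence $\bigl(a_n e^{2\pi int}\bigr)_{n\geq 0}$ for each fixed $t\in\RR$. The key observation is that the phase factors out of the correlations:
\begin{equation*}
\bigl(a_n e^{2\pi int}\bigr)\,\overline{\bigl(a_{n+h}\,e^{2\pi i(n+h)t}\bigr)} \;=\; e^{-2\pi i h t}\, a_n\,\overline{a_{n+h}},
\end{equation*}
so after taking absolute values and then $\sup_{t\in\RR}$ on both sides, all $t$-dependence disappears from the right. Using the crude bounds $(H+1-h)\leq H+1$ and $(N+H)/N\leq 2$ (valid for $H\leq N$) collapses the weighted sum into $\frac{C}{H}\sum_{h=1}^{H}\bigl|\frac{1}{N}\sum_{n=0}^{N-h-1}a_n\overline{a_{n+h}}\bigr|$, which is the claimed inequality (again up to the harmless boundary adjustment from index $N-h-1$ to $N-h$).

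Finally, \eqref{vcd-ww-lim} follows from \eqref{vcd-ww} by taking $\limsup_{N\to\infty}$ of both sides with $H$ fixed; the limsup commutes with the finite sum over $h$ and with the absolute value, and the $C/H$ term is unchanged. There is no real obstacle here, since these are purely formal manipulations of the inequality from Lemma \ref{lem-vdc}; the only care needed is in keeping the constant $C$ absolute (independent of $H$, $N$, $t$, and the sequence) and in verifying that boundary discrepancies of size $O(h/N)$ vanish in each limsup.
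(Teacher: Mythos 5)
Your proposal is correct and follows essentially the same route as the paper: inequality \eqref{vcd-lim} by taking $\limsup_{N\to\infty}$ in Lemma \ref{lem-vdc} and absorbing the $O(h/N)$ boundary discrepancy, and \eqref{vcd-ww-lim} by taking $\limsup_{N\to\infty}$ in \eqref{vcd-ww}. The only cosmetic difference is in \eqref{vcd-ww}: the paper simply quotes Corollary 2.1 of the Wiener--Wintner reference, whereas you re-derive that bound inline by applying van der Corput to $\bigl(a_n e^{2\pi i n t}\bigr)$, factoring the phase $e^{-2\pi i h t}$ out of the correlations, and bounding $\Re\bigl(e^{-2\pi i h t}S_h\bigr)\leq |S_h|$ so that the right-hand side becomes $t$-independent before taking $\sup_t$ --- which is exactly how that corollary is proved, so the substance is identical.
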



The following inequality is sometimes known as the reverse Fatou lemma.

\begin{lemma}\label{lem-revFatou}
Let $(X, \mathcal{F}, \mu)$ be a measure space. Suppose $(f_n)$ is a sequence of integrable, real-valued functions such that $\sup_n f_n \leq F$ for some integrable function $F$. Then
\begin{equation}\label{revFatou}
\limsup_{n \to \infty} \int f_n \, d\mu \leq \int \limsup_{n \to \infty} f_n \, d\mu
\end{equation}
\end{lemma}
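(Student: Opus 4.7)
The plan is to reduce the statement to the standard (non-negative) Fatou lemma by exploiting the integrable dominating function $F$. Specifically, I would introduce the auxiliary sequence $g_n := F - f_n$. By hypothesis $f_n \leq \sup_n f_n \leq F$, so each $g_n$ is non-negative, and since both $F$ and $f_n$ are integrable, so is $g_n$. Applying the usual Fatou lemma to $(g_n)$ gives
\[ \int \liminf_{n \to \infty} (F - f_n) \, d\mu \leq \liminf_{n \to \infty} \int (F - f_n) \, d\mu. \]

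Next I would rewrite both sides using the elementary identity $\liminf_n (-a_n) = -\limsup_n a_n$, together with linearity of the integral. On the left, $\liminf_n (F - f_n) = F - \limsup_n f_n$, and on the right, $\liminf_n \int (F - f_n)\, d\mu = \int F \, d\mu - \limsup_n \int f_n \, d\mu$. Substituting these and cancelling the common finite term $\int F\, d\mu$ (finite because $F$ is integrable), then multiplying through by $-1$ to flip the inequality, yields the reverse Fatou inequality \eqref{revFatou}. The only point that needs a brief sanity check is that the quantities being subtracted are finite: $\limsup_n f_n \leq F$ pointwise, so $\int \limsup_n f_n\, d\mu \leq \int F\, d\mu < \infty$, and similarly $\int f_n \, d\mu \leq \int F\, d\mu$ uniformly in $n$, so $\limsup_n \int f_n \, d\mu$ is a finite real number. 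There is no serious obstacle; the argument is entirely routine once $g_n = F - f_n$ is identified as the right sequence to feed into Fatou.
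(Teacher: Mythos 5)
Your proof is correct, but it takes a genuinely different route from the paper's. You reduce the statement to the standard (non-negative) Fatou lemma by feeding it the auxiliary sequence $g_n = F - f_n$ and then unwinding the $\liminf$/$\limsup$ identities; the paper instead works directly with the decreasing envelopes $g_k = \sup_{n \geq k} f_n$, notes $g_k \downarrow \limsup_n f_n$ with $g_1 \leq F$ integrable, passes to the limit in $\int g_k \, d\mu$ by a convergence theorem, and finishes with the elementary bound $\sup_{n \geq k} \int f_n \, d\mu \leq \int \sup_{n \geq k} f_n \, d\mu$. Both are standard textbook derivations of the reverse Fatou lemma. Your reduction has the advantage of outsourcing all measure-theoretic work to a known result, and the non-negativity of $F - f_n$ makes the extended-real bookkeeping automatic; the one place to be slightly more careful than your "sanity check" suggests is that $\int \limsup_n f_n \, d\mu$ and $\limsup_n \int f_n \, d\mu$ may equal $-\infty$ rather than being finite reals, but the cancellation of the finite term $\int F \, d\mu$ and the resulting inequality remain valid in the extended reals, so nothing breaks. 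The paper's envelope argument is self-contained (it does not invoke Fatou) and makes the monotone structure of the $\limsup$ explicit, which is arguably more transparent here since all the sequences to which the lemma is applied in the paper are uniformly bounded anyway.
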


Finally, here is an inequality that can be used to control the average along the cubes. This inequality is similar to Lemma 5 in \cite{AssaniCubes}.

\begin{lemma}\label{lem-cubes-variant}
Let $a_n$, $b_n$, and $c_n$, $n \in \NN$ be three complex-valued sequences, norm of each bounded above by $1$. Then for each positive integer $N$,
\begin{align}
\label{cubes-variant}\left| \frac{1}{N(N+1)^2} \sum_{m, n = 0}^{N-1} (N + 1 - m) a_n \cdot b_m \cdot c_{n+m} \right|^2
\leq \sup_t \left| \frac{1}{N} \sum_{m'=1}^{2(N-1)} c_{m'} e^{2\pi i m' t} \right|^2
\end{align}
\end{lemma}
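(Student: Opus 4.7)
The plan is to apply Parseval's identity on the circle $[0,1]$: rewrite the triple sum as an $L^2$ pairing between a trigonometric polynomial whose supremum we can control (essentially the right-hand side of the claim) and a product of two polynomials whose $L^2$ norms are bounded directly by Plancherel.

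First, introduce the trigonometric polynomials
\[ A(t) = \sum_{n=0}^{N-1} a_n e^{-2\pi i n t}, \qquad B(t) = \sum_{m=0}^{N-1}(N+1-m) b_m e^{-2\pi i m t}, \qquad C(t) = \sum_{k=0}^{2(N-1)} c_k e^{2\pi i k t}. \]
Expanding the product gives $A(t) B(t) = \sum_{k} \phi_k e^{-2\pi i k t}$, where $\phi_k = \sum_{n+m=k,\, 0 \leq m,n \leq N-1}(N+1-m) a_n b_m$. Using orthogonality, $\int_0^1 C(t) e^{-2\pi i k t}\, dt = c_k$, so integrating term by term yields
\[ \int_0^1 C(t)\, A(t)\, B(t)\, dt \;=\; \sum_{k=0}^{2(N-1)} c_k \phi_k \;=\; \sum_{m,n=0}^{N-1}(N+1-m) a_n b_m c_{n+m} \;=:\; S. \]

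Next, bound the integral by $L^\infty$--$L^1$ duality, Cauchy--Schwarz, and Plancherel's identity:
\[ |S| \;\leq\; \sup_t |C(t)| \cdot \|A\|_{L^2} \|B\|_{L^2}, \]
with $\|A\|_{L^2}^2 = \sum_n |a_n|^2 \leq N$ and $\|B\|_{L^2}^2 = \sum_m (N+1-m)^2 |b_m|^2 \leq \sum_{m=0}^{N-1}(N+1-m)^2 \leq (N+1)^3$, where the last estimate uses $(N+1)(N+2)(2N+3) \leq 6(N+1)^3$. Squaring and dividing by $N^2(N+1)^4$ yields
\[ \frac{|S|^2}{N^2(N+1)^4} \;\leq\; \frac{\sup_t |C(t)|^2}{N(N+1)} \;\leq\; \frac{\sup_t |C(t)|^2}{N^2}, \]
which is the claimed inequality.

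The main work is bookkeeping the coefficient matching in the Fourier expansion of $A(t)B(t)$. A minor subtlety is that the right-hand side of the lemma indexes its trigonometric polynomial starting at $m' = 1$, so $c_0$ is absent from it; the corresponding $k = 0$ contribution in $S$ equals $(N+1) a_0 b_0 c_0$ and can be split off using that $\int_0^1 \tilde C(t)\, dt = 0$ for $\tilde C(t) = \sum_{m'=1}^{2(N-1)} c_{m'} e^{2\pi i m' t}$. The $\sqrt{N(N+1)}$ gap between our estimate $\sqrt{N(N+1)^3}$ for $\|A\|_{L^2}\|B\|_{L^2}$ and the target $N(N+1)^2$ provides enough slack to absorb this boundary term. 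I expect this boundary bookkeeping to be the only nontrivial obstacle.
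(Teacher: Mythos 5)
Your main argument is correct and takes a mildly different route from the paper: you pair the whole triple sum against $C(t)$ in a single Parseval identity and bound $A$ and $B$ in $L^2$ by Plancherel, whereas the paper first applies Cauchy--Schwarz in the $n$-variable (using only $\|a\|_\infty$), writes the remaining correlation $\frac{1}{N+1}\sum_m B_m c_{n+m}$ as the $n$-th Fourier coefficient of a product of two polynomials, and then uses Bessel's inequality over the frequencies $n=0,\dots,N-1$ to convert the average of $|\widehat{F}(n)|^2$ into $\frac1N\int|F|^2$. Both roads lead to the same bound $\sup_t\bigl|\frac1N\sum_{m'=0}^{2(N-1)}c_{m'}e^{2\pi im't}\bigr|^2$ with the sum starting at $m'=0$; your version trades the paper's exploitation of orthogonality in $n$ for the crude bound $\|A\|_{L^2}^2\le N$, which happens to cost nothing here, and your coefficient bookkeeping and the estimate $\sum_{m=0}^{N-1}(N+1-m)^2\le(N+1)^3$ check out.

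The genuine problem is your treatment of the $m'=0$ term. The error you must absorb is additive, of size $(N+1)|a_0b_0c_0|/(N(N+1)^2)$, while the slack you identify is multiplicative in $\sup_t|\tilde C(t)|$; if $c_1=\dots=c_{2(N-1)}=0$ but $a_0b_0c_0\neq0$, the right-hand side of (\ref{cubes-variant}) as printed is $0$ while the left-hand side is $|a_0b_0c_0|^2/(N^2(N+1)^2)>0$, so no absorption argument can succeed --- the inequality with the sum starting at $m'=1$ is simply false. This is an off-by-one in the statement rather than a defect of your method: the paper's own proof also carries the full polynomial $\sum_{k=0}^{2(N-1)}c_ke^{2\pi ikt}$ through to its final line and therefore only establishes the $m'\ge0$ version, which is all that is needed in (\ref{Cubes}) and (\ref{AACubes}) since the discrepancy there is $O(1/N)$ and disappears in the limit. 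You should either prove the lemma with the sum from $m'=0$, or keep $m'=1$ and record an explicit additive error term of order $1/N^2$ on the right; do not claim the boundary term can be absorbed into the stated bound.
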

\section{When $f_1 \in \mathcal{Z}_2^\perp$, $a = 1$, $b = 2$}\label{sec:CL}

In this section, we will prove the uniform Wiener Wintner theorem for the case when $f_1$ belongs to $\mathcal{Z}_2^\perp$. We will prove this special case since the fact that $|b - a| = 1$ simplifies the proofs tremendously, since for any $f, g \in L^2(\mu)$,
\[ \int f(Tx)g(T^2x) d\mu(x) = \int f(x)g(Tx) d\mu(x). \] 

\begin{theorem}\label{CL} Let $(X, \mathcal{F}, \mu, T)$ be an ergodic dynamical system, and $f_1, f_2 \in L^\infty(X)$, and $\| f_i \|_\infty \leq 1$ for both $i = 1, 2$. If $f_1 \in \mathcal{Z}_2^\perp$, then
\[ \limsup_{N \to \infty} \sup_{t \in \RR} \left| \frac{1}{N} \sum_{n=1}^N f_1(T^nx) f_2(T^{2n}x) e^{2\pi i n t} \right| = 0 \]
for $\mu$-a.e. $x \in X$.
\end{theorem}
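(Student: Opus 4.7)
The plan is to apply the Wiener-Wintner form of the van der Corput inequality to strip the oscillating factor $e^{2\pi i n t}$, reducing the problem to a pointwise average of double-recurrence type governed by Bourgain's theorem, and then to control the resulting limit in $L^2$ using the Host-Kra seminorm hypothesis $\vertiii{f_1}_3 = 0$.

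First, applying (\ref{vcd-ww-lim}) to $a_n(x) = f_1(T^n x) f_2(T^{2n}x)$ yields
\[
\limsup_{N \to \infty} \sup_{t} \left|\frac{1}{N}\sum_{n=1}^N f_1(T^n x)f_2(T^{2n}x)e^{2\pi i n t}\right|^2 \leq \frac{C}{H} + \frac{C}{H}\sum_{h=1}^H \limsup_{N \to \infty}\left|\frac{1}{N}\sum_{n=1}^N F_h(T^n x) G_h(T^{2n}x)\right|,
\]
where $F_h := f_1 \cdot f_1 \circ T^h$ and $G_h := f_2 \cdot f_2 \circ T^{2h}$. Bourgain's double recurrence theorem applied to each pair $(F_h, G_h)$ produces a full-measure set on which the inner average converges to a limit $L_h(x)$; intersecting these countably many sets gives a single null set outside of which the pointwise bound holds for every $H \in \NN$. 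Noting that $\sup_t|W_N|^2$ is measurable (the supremum reduces to a countable one by continuity of $W_N$ in $t$), integration yields
\[
\int \limsup_{N \to \infty} \sup_{t}|W_N(f_1, f_2, x, t)|^2\, d\mu(x) \leq \frac{C}{H} + \frac{C}{H}\sum_{h=1}^H \|L_h\|_2,
\]
using Cauchy--Schwarz on the probability space to bound $\int|L_h|\,d\mu \leq \|L_h\|_2$. It therefore suffices to show that $\frac{1}{H}\sum_{h=1}^H\|L_h\|_2 \to 0$ as $H \to \infty$.

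The key estimate is $\|L_h\|_2^2 \leq \vertiii{F_h}_2^2$, obtained by expanding the square. Since the Bourgain average converges in $L^2$ as well,
\[
\|L_h\|_2^2 = \lim_N \frac{1}{N^2}\sum_{n,m=0}^{N-1}\int F_h(T^n x)G_h(T^{2n}x)F_h(T^m x)G_h(T^{2m}x)\, d\mu(x).
\]
Substituting $y = T^m x$ and setting $k = n-m$, the integrand becomes $F_h(y)\cdot T^k F_h(y) \cdot T^m\bigl(G_h \cdot T^{2k}G_h\bigr)(y)$. This factorization is where the relation $|b-a|=1$ enters: the difference $2n - 2m = 2k$ between the two $G_h$-shifts is independent of $m$, so $G_h(T^{2m}x)G_h(T^{2n}x)$ pulls out as a clean translate by $T^m$ of $G_h \cdot T^{2k}G_h$. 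Summing in $m$ and invoking the mean ergodic theorem (ergodicity of $T^{b-a}=T$ here) collapses the expression to a Cesaro average in $k$ of $A(h,k)B(h,k)$, where $A(h,k) = \int f_1\cdot T^h f_1 \cdot T^k f_1 \cdot T^{h+k}f_1\, d\mu$ and $B(h,k) = \int f_2 \cdot T^{2h}f_2 \cdot T^{2k}f_2 \cdot T^{2h+2k}f_2\, d\mu$. Using $|B(h,k)| \leq 1$ and Cauchy--Schwarz in $k$,
\[
\|L_h\|_2^2 \leq \limsup_K \frac{1}{K}\sum_{k=1}^K |A(h,k)| \leq \left(\lim_K \frac{1}{K}\sum_{k=1}^K A(h,k)^2\right)^{1/2} = \vertiii{F_h}_2^2.
\]

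Finally, averaging over $h$ and applying Jensen's inequality twice gives
\[
\frac{1}{H}\sum_{h=1}^H \|L_h\|_2 \leq \left(\frac{1}{H}\sum_{h=1}^H \vertiii{F_h}_2^2\right)^{1/2} \leq \left(\frac{1}{H}\sum_{h=1}^H \vertiii{f_1\cdot f_1\circ T^h}_2^4\right)^{1/4},
\]
and the right-hand side tends to $\vertiii{f_1}_3^2 = 0$ as $H \to \infty$ by the inductive formula in Definition~\ref{HostKraSeminorms} together with the hypothesis $f_1 \in \mathcal{Z}_2^\perp$. Sending $H \to \infty$ in the integrated bound forces $\sup_t|W_N(f_1,f_2,x,t)| \to 0$ for $\mu$-a.e.\ $x$. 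The main technical delicacy is the $L^2$-factorization used for $\|L_h\|_2^2$: its cleanness depends on $T^{b-a}$ being ergodic, which is automatic exactly when $b-a = \pm 1$, and this is why the paper isolates the case $(a,b)=(1,2)$ here before treating general $a,b$ in Section~\ref{sec:general}.
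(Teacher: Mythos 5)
Your argument is correct and follows essentially the same route as the paper's first proof of Theorem \ref{CL}: one application of the Wiener--Wintner van der Corput inequality, Bourgain's double recurrence theorem to define the limits $L_h$, an $L^2$ bound of the inner average by $\vertiii{f_1\cdot f_1\circ T^h}_2$ via the mean ergodic theorem for the ergodic $T^{b-a}=T$, and the recursion $\lim_H \frac{1}{H}\sum_h \vertiii{f_1\cdot f_1\circ T^h}_2^4 = \vertiii{f_1}_3^8 = 0$. The only divergence is cosmetic: where you expand $\|L_h\|_2^2$ as a double sum over $n,m$ and pass to the Cesàro average in $k=n-m$ (a limit interchange that, as written, deserves the Fejér/van der Corput justification), the paper reaches the identical bound by a second application of inequality (\ref{vcd-lim}) followed by integration, which handles that interchange cleanly.
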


We will present two proofs; the first one is more direct and concise than the second one, while the second one will be similar to the proof for the general case (when $a, b \in \ZZ$).

In the first proof, we first find the upper bound for the limit supremum for the $L^2$-norm of the average of the sequence $G_1(T^nx)G_2(T^{2n}x)$ for any $G_1, G_2 \in L^\infty(\mu)$ using $(\ref{vcd-lim})$---this upper bound turns out to be a constant multiple of $\vertiii{  f_1 }_2^2$. Then we use this upper bound as well as the double recurrence theorem and inequality $(\ref{vcd-ww-lim})$ to find the upper bound for the norm of the limit supremum of the average of the sequence $f_1(T^nx)f_2(T^{2n}x)e^{2\pi i n t}$, which turns out to be a constant multiple of $\vertiii{ f_1 }_3^2$.

For the second proof, we first apply the inequality $(\ref{vcd-ww})$ by setting $a_n = f_1(T^nx)f_2(T^{2n}x)$ pointwise, and then apply the inequality $(\ref{vdc})$ to the new average. After noticing that the average after the VDC trick converges a.e. (by the double recurrence theorem), we take the limit supremum of the first average and integrate. Using the ergodic decomposition, Wiener's lemma, the inequality ($\ref{cubes-variant}$), and an inequality found in \cite{AssaniPresser}, we can conclude that the original average converges to zero $\mu$-a.e.

\begin{proof}[First proof of Theorem $\ref{CL}$]
First, we would like to show that for any two functions $G_1, G_2 \in L^\infty(\mu)$, where $\|G_i\|_{L^\infty(\mu)} \leq 1$ for $i = 1, 2$, the following estimate holds:
\begin{equation}\label{Z2ineq}
\limsup_{N \to \infty} \int \left| \frac{1}{N} \sum_{n=1}^N G_1(T^nx) G_2(T^{2n}x) \right|^2 d\mu \leq C \vertiii {G_1} _2^2.
\end{equation}
First we apply the inequality ($\ref{vcd-lim}$) by setting $a_n = G_1(T^nx) G_2(T^{2n}x)$ pointwise. Then we integrate both sides and apply $(\ref{revFatou})$ to obtain
\begin{align} & \label{firstIntegral}\limsup_{N \to \infty} \int \left| \frac{1}{N} \sum_{n=1}^N G_1(T^nx) G_2(T^{2n}x) \right|^2 d\mu  \leq  \int \limsup_{N \to \infty} \left| \frac{1}{N} \sum_{n=1}^N G_1(T^nx) G_2(T^{2n}x) \right|^2 d\mu\\
& \nonumber \leq \frac{C}{H} + \frac{C}{(H+1)^2} \sum_{h=1}^{H} (H - h + 1) \Re \left(\int \limsup_{N \to \infty} \frac{1}{N} \sum_{n=1}^{N} (G_1 \cdot G_1 \circ T^h)(T^nx)(G_2 \cdot G_2 \circ T^{2h})(T^{2n}x)  d\mu\right).
\end{align} 
Note that $\displaystyle \lim_{N \to \infty} \frac{1}{N} \sum_{n=1}^{N} (G_1 \cdot G_1 \circ T^h)(T^nx)(G_2 \cdot G_2 \circ T^{2h})(T^{2n}x)$ exists for $\mu$-a.e. $x \in X$ by the double recurrence theorem. Hence, the dominated convergence theorem tells us that
\begin{align*} & \label{firstIntegral}\limsup_{N \to \infty} \int \left| \frac{1}{N} \sum_{n=1}^N G_1(T^nx) G_2(T^{2n}x) \right|^2 d\mu  \\
& \nonumber \leq \frac{C}{H} + \frac{C}{(H+1)^2} \sum_{h=1}^{H} (H - h + 1) \Re \left(\lim_{N \to \infty}\int  \frac{1}{N} \sum_{n=1}^{N} (G_1 \cdot G_1 \circ T^h)(T^nx)(G_2 \cdot G_2 \circ T^{2h})(T^{2n}x)  d\mu\right).
\end{align*} 
Using the fact that $T$ is measure-preserving, we can apply the mean ergodic theorem to obtain 
\begin{align*}
& \limsup_{N \to \infty} \int \left| \frac{1}{N} \sum_{n=1}^N G_1(T^nx) G_2(T^{2n}x) \right|^2 d\mu  \\
&\leq \frac{C}{H} + \frac{C}{H} \sum_{h=1}^{H} \left| \lim_{N \to \infty} \int (G_1 \cdot G_1 \circ T^h)(x) \frac{1}{N}\sum_{n=1}^{N} (G_2 \cdot G_2 \circ T^{2h})(T^{n}x)  d\mu\right|\\
&= \frac{C}{H} + \frac{C}{H} \sum_{h=1}^{H} \left| \int (G_1 \cdot G_1 \circ T^h)(x) d\mu(x) \int (G_2 \cdot G_2 \circ T^{2h})(y)  d\mu(y)\right|.
\end{align*}
Because $\| G_2 \|_{L^\infty(\mu)} \leq 1$, we know that $\displaystyle \left| \int (G_2 \cdot G_2 \circ T^{2h})(y)  d\mu(y)\right| \leq 1$. Hence, we can apply the Cauchy-Schwarz inequality and let $H \to \infty$ to obtain $(\ref{Z2ineq})$.

Now we are ready to prove the theorem. Our goal is to show that there exists a universal constant $C$ such that
\[ \int \limsup_{N \to \infty} \sup_{t \in \RR} \left| \frac{1}{N} \sum_{n=1}^N f_1(T^nx)f_2(T^{2n}x)e^{2\pi i n t} \right|^2 d\mu \leq C \vertiii {f_1} _3^2. \]
By using the inequality ($\ref{vcd-ww-lim}$) by setting $a_n = f_1(T^nx)f_2(T^{2n}x)$ pointwise, we obtain
\begin{equation*}
\limsup_{N \to \infty}\sup_{t \in \RR} \left| \frac{1}{N} \sum_{n=1}^N f_1(T^nx) f_2(T^{2n}x) e^{2\pi int} \right|^2  \leq \frac{C}{H}+\frac{C}{H} \sum_{h=1}^H  \lim_{N \to \infty} \left| \frac{1}{N}\sum_{n=1}^{N} F_{1, h}(T^nx) F_{2, h}(T^{2n}x) \right|,
\end{equation*}
where $F_{1, h}(x) = f_1(x) f_1 \circ T^h(x)$, and $F_{2, h}(x) = f_2(x) f_2 \circ T^{2h}(x)$, and the limit on the right hand side exists by the double recurrence theorem.
We take the integral on both sides of the inequality above, and apply the dominated convergence theorem and the Cauchy-Schwarz inequality to obtain
\begin{equation*}
\int \limsup_{N \to \infty}\sup_{t \in \RR} \left| \frac{1}{N} \sum_{n=1}^N f_1(T^nx) f_2(T^{2n}x) e^{2\pi int} \right|^2 d\mu  \leq \frac{C}{H}+\frac{C}{H} \sum_{h=1}^H  \left( \lim_{N \to \infty} \int \left| \frac{1}{N}\sum_{n=1}^{N} F_{1, h}(T^nx) F_{2, h}(T^{2n}x) \right|^2 d\mu\right)^{1/2}.
\end{equation*}
Apply ($\ref{Z2ineq}$) by setting $G_1 = F_{1, h}$ and $G_2 = F_{2, h}$ on the right hand side while letting $H \to \infty$, we obtain the desired upper bound; we have
\begin{align*}
&\int \limsup_{N \to \infty} \sup_{t \in \RR} \left| \frac{1}{N} \sum_{n=1}^N f_1(T^nx) f_2(T^{2n}x) e^{2\pi int} \right|^2 d\mu \leq \limsup_{H \to \infty} \frac{C}{H} \sum_{h=1}^H \vertiii{ f_1 \cdot f_1 \circ T^h }_2\\
&\leq C\left( \lim_{H \to \infty} \frac{1}{H} \sum_{h=1}^H \vertiii{f_1 \cdot f_1 \circ T^h }_2^4 \right)^{1/4} = C \vertiii{f_1}_3^2.
\end{align*}
Since $f_1$ belongs to $\mathcal{Z}_2^\perp$, we know that $\vertiii{ f_1 }_3 = 0$, which completes the proof.
\end{proof}

\begin{proof}[Second Proof]
We denote $F_{1, h}(x) = f_1(x) f_1 \circ T^h(x)$, and $F_{2, h}(x) = f_2(x) f_2 \circ T^{2h}(x)$. We apply the inequality $(\ref{vcd-ww-lim})$ by setting $a_n = f_1(T^nx)f_2(T^{2n}x)$ pointwise, and the Cauchy-Schwarz inequality to obtain the following estimate for any $H \in \NN$:
\begin{align}
\label{secondIntegral} \limsup_{N \to \infty}\sup_{t \in \RR} \left| \frac{1}{N} \sum_{n=1}^N f_1(T^nx)f_2(T^{2n}x) e^{2\pi i n t} \right|^2
& \leq \frac{C}{H} + \frac{C}{H} \sum_{h = 1}^H \limsup_{N \to \infty} \left| \frac{1}{N} \sum_{n=1}^{N}  F_{1, h}(T^nx) F_{2, h} (T^{2n}x) \right|\\
&\nonumber \leq \frac{C}{H} + \left(\frac{C}{H}  \sum_{h = 1}^H \limsup_{N \to \infty} \left| \frac{1}{N} \sum_{n=1}^{N} F_{1, h}(T^nx) F_{2, h} (T^{2n}x) \right|^2 \right)^{1/2}
\end{align}
Now we apply the inequality $(\ref{vcd-lim})$ on the average inside the absolute value by setting $a_n = F_{1, h}(T^nx)F_{2, h}(T^{2n}x)$ pointwise so that for any $K \in \NN$, we have 
\begin{align}
&\nonumber \limsup_{N \to \infty} \left| \frac{1}{N} \sum_{n=1}^{N} F_{1, h}(T^nx)F_{2, h}(T^{2n}x) \right|^2 \\
&\label{secondVDC} \leq \frac{C}{K} + \frac{C}{(K+1)^2} \sum_{k=1}^K (K - k+1) \left( \limsup_{N \to \infty}\frac{1}{N} \sum_{n=1}^{N} \left(F_{1, h} \cdot F_{1, h} \circ T^k \right)(T^nx)   \left(F_{2, h} \cdot F_{2, h} \circ T^{2k} \right)(T^{2n}x) \right).
\end{align}
Note that the average
\[ \frac{1}{N} \sum_{n=1}^{N}  \left(F_{1, h} \cdot F_{1, h} \circ T^k \right)(T^nx)   \left(F_{2, h} \cdot F_{2, h} \circ T^{2k} \right)(T^{2n}x) \]
converges as $N \to \infty$ by the double recurrence theorem. Now we combine $(\ref{secondIntegral})$ and $(\ref{secondVDC})$, integrate both sides, and apply the H\"{o}lder's inequality as well as the dominated convergence theorem to obtain

\begin{align*} 
& \int \limsup_{N \to \infty} \sup_{t \in \RR} \left| \frac{1}{N} \sum_{n=1}^N f_1(T^nx) f_2(T^{2n}x) e^{2\pi i n t} \right|^2 d\mu \\
&\leq \frac{C}{H} + \int \left( \frac{C}{H}\sum_{h = 1}^H \limsup_{N \to \infty} \left| \frac{1}{N} \sum_{n=1}^{N} F_{1, h}(T^nx) F_{2, h}(T^{2n}x) \right|^2 \right)^{1/2} d\mu\\
&\leq \frac{C}{H} + \left(  \frac{C}{H}\sum_{h = 1}^H \int  \limsup_{N \to \infty}\left| \frac{1}{N} \sum_{n=1}^{N}F_{1, h}(T^nx)F_{2, h}(T^{2n}x) \right|^2 d\mu \right)^{1/2}  \\
&\leq \frac{C}{H} + \left( \frac{C}{H}\sum_{h=1}^H \left( \frac{C}{K} + \frac{C}{(K+1)^2} \sum_{k=1}^K (K+1-k) \right. \right. \\
& \left. \left. \lim_{N \to \infty} \int  \left(  \frac{1}{N} \sum_{n=1}^{N} \left(F_{1, h} \cdot F_{1, h} \circ T^k \right)(T^nx)   \left(F_{2, h} \cdot F_{2, h} \circ T^{2k} \right)(T^{2n}x) \, d\mu\right)\right) \right)^{1/2}.
\end{align*}
Using the fact that $T$ is measure-preserving, we can evaluate the limit in the last term by applying the mean ergodic theorem: We have
\begin{align*}
& \lim_{N \to \infty} \int \frac{1}{N} \sum_{n=1}^{N} \left(F_{1, h} \cdot F_{1, h} \circ T^k \right)(T^nx)   \left(F_{2, h} \cdot F_{2, h} \circ T^{2k} \right)(T^{2n}x)d\mu(x) \\
&=   \lim_{N \to \infty}  \int  \left(F_{1, h} \cdot F_{1, h} \circ T^k \right)(x)   \frac{1}{N} \sum_{n=1}^{N} \left(F_{2, h} \cdot F_{2, h} \circ T^{2k} \right) (T^{n}x) d\mu(x)
\end{align*}
\begin{align*}
&= \iint F_{1, h}(x) (F_{1, h} \circ T^k) (x) F_{2, h}(y)(F_{2, h} \circ T^{2k}) (y) \, d\mu(x) d\mu(y) \\
&= \iint f_1 \otimes f_2(x, y) \,{f_1 \otimes f_2} (U^h(x, y)) \, {f_1 \otimes f_2}(U^k(x, y)) \, f_1 \otimes f_2(U^{h+k}(x, y)) \, d\mu \otimes \mu(x, y),
\end{align*}
where $U = T \otimes T^2$ is a measure preserving transformation on $X^2$. If we take the ergodic decomposition of $\mu \otimes \mu$ with respect to $U$, then the integral becomes
\[ \iint f_1 \otimes f_2(x, y) \, {f_1 \otimes f_2} (U^h(x, y)) \, {f_1 \otimes f_2}(U^k(x, y)) \, f_1 \otimes f_2(U^{h+k}(x, y)) \, d(\mu \otimes \mu)_c(x, y) \, d\mu(c). \]
Let $H = K$. Note that, on the system $(X^2, (\mu \otimes \mu)_c, U)$ for a.e. $c \in X$, the inequality $(\ref{cubes-variant})$ tells us that we have
\begin{align}\label{Cubes} & \left|\frac{1}{H(H+1)^2}  \sum_{h, k = 0}^{H-1} (H + 1 - k) f_1 \otimes f_2 (U^h(x, y)) \, f_1 \otimes f_2 (U^k(x, y))  \, f_1 \otimes f_2 (U^{h+k}(x, y)) \right|^2 \nonumber \\
&\leq  \sup_{t \in \RR} \left| \frac{1}{H} \sum_{h=1}^{2(H-1)} f_1 \otimes f_2 (U^h(x, y)) e^{2\pi iht} \right|^2. \end{align}
Note that the proof is complete if we can show that $\displaystyle \limsup_{H \to \infty} \sup_{t \in \RR} \left| \frac{1}{H} \sum_{h=1}^H f_1 \otimes f_2 (U^h(x, y)) e^{2\pi iht} \right|^2 = 0$ for $(\mu \otimes \mu)_c$-a.e. $(x, y) \in X^2$, for $\mu$-a.e. $c \in X$. In other words, we would like to show that if $f_1$ belongs to $\mathcal{Z}_2^\perp$, then $f_1 \otimes f_2$ belongs to the orthogonal complement of the Kronecker factor with respect to the transformation $U$ and measure $(\mu \otimes \mu)_c$ for $\mu$-a.e. $c \in X$, so that we can apply the uniform Wiener Wintner theorem (Theorem $\ref{uniformWW}$).

To show that this is indeed the case, we first prove the following lemma:

\begin{lemma}\label{spectral}
Suppose $(Y, \mathcal{G}, \nu, U)$ is a measure preserving system, and $f \in L^\infty(X)$ such that
\[ \int \limsup_N \sup_{t \in \RR} \left|\frac{1}{ N} \sum_{n=1}^N f(U^ny) e^{2\pi i n t} \right| d\nu = 0. \]
If $\sigma_f$ is the spectral measure of $f$ with respect to $U$, then
\[ \lim_{N \to \infty} \frac{1}{N} \sum_{n=1}^N | \hat{\sigma}_f(n) |^2 = 0. \]
\end{lemma}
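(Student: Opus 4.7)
The plan is to reduce the conclusion to showing that the spectral measure $\sigma_f$ has no atoms, and then derive this from the hypothesis via a modulated ergodic average. By Wiener's lemma on spectral measures (the same tool invoked in the paper's description of Duncan's proof),
\[ \lim_{N \to \infty} \frac{1}{N}\sum_{n=1}^N |\hat{\sigma}_f(n)|^2 \;=\; \sum_{s \in [0,1)} |\sigma_f(\{s\})|^2, \]
so it is enough to prove that $\sigma_f$ is a continuous measure on $[0,1)$.

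I would argue by contradiction. Suppose $\sigma_f(\{t_0\}) > 0$ for some $t_0 \in [0,1)$. By the spectral theorem, this is equivalent to the orthogonal projection $P_{t_0} f$ of $f$ onto the $e^{2\pi i t_0}$-eigenspace of $U$ being nonzero, and in fact $\|P_{t_0} f\|_{L^2(\nu)}^2 = \sigma_f(\{t_0\}) > 0$. I would then examine the modulated ergodic average
\[ A_N(y) \;=\; \frac{1}{N}\sum_{n=1}^N f(U^n y)\, e^{-2\pi i n t_0}. \]
Transferring to the spectral model of $U$ restricted to the cyclic subspace of $f$, one has
\[ \|A_N - P_{t_0} f\|_{L^2(\nu)}^2 \;=\; \int_{[0,1)} \left|\frac{1}{N}\sum_{n=1}^N e^{2\pi i n(s - t_0)} - \mathbf{1}_{\{t_0\}}(s)\right|^2 d\sigma_f(s), \]
and the integrand tends to $0$ pointwise in $s$ and is uniformly bounded by $4$; the dominated convergence theorem then forces $A_N \to P_{t_0} f$ in $L^2(\nu)$. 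Passing to an a.e.-convergent subsequence yields $\limsup_N |A_N(y)| \geq |P_{t_0} f(y)|$ for $\nu$-a.e.\ $y$.

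On the other hand, the hypothesis forces $\limsup_N \sup_{t \in \RR} |\frac{1}{N}\sum_{n=1}^N f(U^n y) e^{2\pi i n t}| = 0$ for $\nu$-a.e.\ $y$, and specializing the supremum to the specific character $t = -t_0$ gives $\limsup_N |A_N(y)| = 0$ $\nu$-a.e. Combined with the previous paragraph, this yields $P_{t_0} f = 0$ $\nu$-a.e., contradicting $\sigma_f(\{t_0\}) > 0$. Hence $\sigma_f$ is atomless and the lemma follows from Wiener's identity.

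I do not anticipate a substantial obstacle: the $L^2$ computation for $A_N$ is the standard von Neumann-style mean ergodic theorem for a modulated average, and Wiener's lemma handles the final step. The only point requiring a moment's care is the Fourier sign convention, which is harmless since the hypothesis supplies a supremum over every $t \in \RR$, so whichever of $\pm t_0$ extracts the eigenfunction component of $f$ is automatically available.
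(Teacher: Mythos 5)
Your argument is correct, and at the top level it runs parallel to the paper's: both proofs invoke Wiener's lemma to reduce the conclusion to the continuity of $\sigma_f$, and both kill each atom by specializing the supremum in the hypothesis to the single frequency $t=-t_0$. The middle step is executed differently, though. The paper identifies the atom weakly, writing $\sigma_f(\{t\}) = \lim_N \int f(y)\,\frac{1}{N}\sum_{n=1}^N f(U^ny)e^{2\pi i nt}\,d\nu(y)$, justifies exchanging the limit with the integral by the Wiener--Wintner pointwise ergodic theorem together with dominated convergence, and then concludes directly from the bound $|\sigma_f(\{t\})| \le \|f\|_\infty \int \limsup_N \sup_t |\cdots|\,d\nu = 0$, with no contradiction argument. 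You instead identify the atom as $\|P_{t_0}f\|_{L^2(\nu)}^2$, prove $A_N \to P_{t_0}f$ in $L^2$ by the spectral calculus (a modulated mean ergodic theorem), and extract an a.e.-convergent subsequence. Your route has the minor advantage of not needing the Wiener--Wintner a.e.\ convergence theorem as an input---only the mean ergodic theorem---at the cost of the subsequence extraction and the contradiction framing; the paper's is slightly shorter but leans on a stronger black box. Both are complete. One small point worth making explicit in either write-up (the paper elides it as well): the supremum over $t \in \RR$ of the trigonometric average coincides with the supremum over rational $t$, so the integrand in the hypothesis is measurable and ``integral equal to zero'' does yield ``zero $\nu$-a.e.,'' which is the step where you pass from the integral hypothesis to the pointwise statement.
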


\begin{proof}
By Wiener's lemma, we have
\[ \lim_{N \to \infty} \frac{1}{N} \sum_{n=1}^N | \hat{\sigma}_f(n) |^2 = \sum_{t} | \sigma_f( \left\{ t \right\} ) |^2. \]
Observe that, by the spectral theorem,
\[ |\sigma_f( \left\{ t \right\} )| = \left| \lim_{N \to \infty} \frac{1}{N} \sum_{n=1}^N \hat{\sigma}_f(n) e^{2\pi i n t} \right| = \left| \lim_{N \to \infty} \int f(y)  \frac{1}{N} \sum_{n=1}^N f(U^ny) e^{2\pi int} d\nu(y) \right|. \]
Since $\displaystyle{\lim_{N \to \infty} \frac{1}{N} \sum_{n=1}^N f(U^ny) e^{2\pi int}}$ exists by the Wiener Wintner pointwise ergodic theorem, we can apply the dominated convergence theorem to show that
\begin{align*} 
|\sigma_f( \left\{ t \right\} )| 
&= \left| \int f (y) \lim_{N \to \infty} \frac{1}{N} \sum_{n=1}^N f (U^ny)  e^{2\pi i n t} d\nu(y) \right| \\
&\leq \| f \|_\infty \int \limsup_N \sup_{t \in \RR} \left| \frac{1}{ N} \sum_{n=1}^N f(U^n y) e^{2 \pi i n t} \right| d\nu(y) = 0.
\end{align*}
\end{proof}
The following lemma will complete the proof of this theorem.
\begin{lemma}\label{CLperpKperp}
Suppose $f_1 \in \mathcal{Z}_2^\perp$. Then $f_1 \otimes f_2$  belongs to the orthogonal complement of the Kronecker factor of $U = T \times T^2$ with respect to measure $(\mu \otimes \mu)_c$ for $\mu$-a.e. $c$.
\end{lemma}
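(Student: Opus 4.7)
The plan is to reformulate the conclusion as the vanishing of the $\vertiii{\cdot}_2$-seminorm of $F := f_1 \otimes f_2$ on the ergodic fiber system $(X^2, (\mu \otimes \mu)_c, U)$ with $U := T \times T^2$, and then to transfer the hypothesis $\vertiii{f_1}_3 = 0$ through this seminorm. By Definition \ref{HostKraSeminorms} (equivalently, by Wiener's lemma applied to the spectral measure of $F$ on the ergodic fiber), $F$ lies in the orthogonal complement of the Kronecker factor of $(X^2, (\mu \otimes \mu)_c, U)$ if and only if
\[ \lim_{H \to \infty} \frac{1}{H} \sum_{h=1}^H |\phi_h(c)|^2 = 0, \qquad \phi_h(c) := \int F \cdot F \circ U^h \, d(\mu \otimes \mu)_c. \]
Since these Ces\`aro averages are uniformly bounded by $1$ and converge pointwise in $c$ (to $\sum_t |\sigma_{F,c}(\{t\})|^2$, by Wiener's lemma), dominated convergence reduces the desired $\mu$-a.e.\ vanishing to the integrated statement $\int \tfrac{1}{H}\sum_{h=1}^H |\phi_h(c)|^2 \, d\mu(c) \to 0$ as $H \to \infty$.

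To obtain this integrated bound, I would identify $\phi_h(c)$ with the value on the $c$-fiber of the ergodic projection $L_h := \expec(F \cdot F \circ U^h \mid \mathcal{I}_U)$, so that disintegration gives $\int |\phi_h(c)|^2\, d\mu(c) = \|L_h\|_{L^2(\mu \otimes \mu)}^2$. The mean ergodic theorem then yields
\[ \|L_h\|_2^2 = \lim_{N \to \infty} \frac{1}{N} \sum_{n=0}^{N-1} \int F \cdot (F \circ U^h) \cdot (F \circ U^n) \cdot (F \circ U^{n+h}) \, d(\mu \otimes \mu), \]
and the product structure $F = f_1 \otimes f_2$, $U = T \times T^2$ factors each integrand into the product $[f_1(x) f_1(T^h x) f_1(T^n x) f_1(T^{n+h} x)] \cdot [f_2(y) f_2(T^{2h} y) f_2(T^{2n} y) f_2(T^{2(n+h)} y)]$. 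Splitting the integral in $x$ and $y$ turns the double sum into $\lim_N \tfrac{1}{N}\sum_n \tilde\alpha_h(n) \tilde\beta_h(n)$ with $\tilde\alpha_h, \tilde\beta_h$ scalar sequences; Cauchy-Schwarz in $n$, together with $\|f_2\|_\infty \leq 1$, yields $\|L_h\|_2^2 \leq \vertiii{f_1 \cdot f_1 \circ T^h}_2^2$.

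A second application of Cauchy-Schwarz, this time in $h$, gives
\[ \frac{1}{H} \sum_{h=1}^H \|L_h\|_2^2 \leq \left( \frac{1}{H} \sum_{h=1}^H \vertiii{f_1 \cdot f_1 \circ T^h}_2^4 \right)^{1/2}, \]
whose limit as $H \to \infty$ is $\vertiii{f_1}_3^4 = 0$, using $f_1 \in \mathcal{Z}_2^\perp$. This closes the argument. The step I expect to be the most delicate is the passage from this $L^1(\mu)$-bound on the Ces\`aro averages to a $\mu$-almost-everywhere statement: in general such a transition can fail, but here it is rescued by Wiener's lemma, which guarantees a \emph{non-negative} pointwise limit for $\tfrac{1}{H}\sum_h |\phi_h(c)|^2$, so the vanishing of its $\mu$-integral forces the pointwise limit itself to be zero for $\mu$-a.e.\ $c$.
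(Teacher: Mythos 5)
Your argument is correct, but it reaches the conclusion by a genuinely different route than the paper. The paper reduces the statement, via its Lemma \ref{spectral}, to showing that the fiberwise integral of the \emph{uniform} Wiener--Wintner averages $\limsup_N \sup_t \left| \frac{1}{N}\sum_n f_1\otimes f_2(U^n(x,y))e^{2\pi i n t}\right|$ vanishes; it then integrates over $c$, uses Fubini and the ergodic decomposition to rewrite this as an iterated integral in $(x,y)$, and invokes the return-times estimate of Lemma 8 of \cite{AssaniPresser}, which bounds the inner integral by $C\vertiii{f_1}_3^2=0$. You bypass the supremum over $t$ and the external estimate entirely: you identify $\int |\phi_h(c)|^2\,d\mu(c)$ with $\|\expec(F\cdot F\circ U^h\mid \mathcal{I}_U)\|_2^2$, evaluate it by the mean ergodic theorem, exploit the product structure of $F$ and $U$ to split the integrand over $x$ and $y$, and control the result by two applications of Cauchy--Schwarz, arriving at $\limsup_H \frac{1}{H}\sum_h \|L_h\|_2^2 \leq \vertiii{f_1}_3^4 = 0$. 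What your route buys is self-containedness and a clean quantitative bound in terms of $\vertiii{f_1}_3$ that needs nothing beyond Wiener's lemma on the fibers and the definition of the seminorms; what the paper's route buys is economy given that the uniform return-times bound of \cite{AssaniPresser} is already available, and consistency with the spectral-measure framework set up in Lemma \ref{spectral}. Both proofs share the final step: the Ces\`aro averages converge pointwise in $c$ to a nonnegative limit (Wiener's lemma), so vanishing of the $\mu(dc)$-integral forces $\mu$-a.e.\ vanishing, which is exactly the delicate passage you flag; your handling of it is the same as the paper's.
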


\begin{proof}
Equivalently, we would like to show that if $\sigma_{f_1 \otimes f_2}$ is the spectral measure with respect to $U$, we have
\[ \frac{1}{N} \sum_{n=1}^N | \hat{\sigma}_{f_1 \otimes f_2}(n) |^2 \to 0 \]
i.e. we would like to show that $\sigma_{f_1 \otimes f_2}$ is a continuous measure. By lemma $\ref{spectral}$, this can be done by showing
\begin{equation}\label{goal} \int \limsup_{N \to \infty} \sup_{t \in \RR} \left| \frac{1}{N} \sum_{n=1}^N f_1 \otimes f_2(U^n(x, y)) e^{2\pi int} \right| d(\mu \otimes \mu)_c(x, y) = 0 \end{equation}
for $\mu$-a.e. $c \in X$. Because of the ergodic decomposition and Fubini's theorem, we have
\begin{align*}
& \iint \limsup_{N \to \infty} \sup_{t \in \RR} \left| \frac{1}{N} \sum_{n=1}^N f_1 \otimes f_2(U^n(x, y)) e^{2\pi int} \right| \, d (\mu \otimes \mu)_c(x, y) \, d\mu(c) \\
&= \int \left( \int \limsup_{N \to \infty} \sup_{t \in \RR} \left| \frac{1}{N} \sum_{n=1}^N f_1(T^nx) f_2(T^{2n} y) e^{2\pi int} \right| d\mu(y) \right) d\mu(x).
\end{align*}
The inner integral has an upper bound, as it is stated and proved in Lemma 8 of \cite{AssaniPresser}: There exists a set of full measure $X_{f_1}$ such that for all $x \in X_{f_1}$,
\[ \int \limsup_{N \to \infty} \sup_{t \in \RR} \left| \frac{1}{N} \sum_{n=1}^N f_1(T^nx)f_2(T^{2n}y) e^{2\pi int} \right| d\mu(y) \leq C \vertiii{ f_1 }_3^2 = 0 \]
for $\mu$-a.e. $y \in X$. Therefore, 
\begin{equation}\label{ergDecomp} \iint \limsup_{N \to \infty} \sup_{t \in \RR} \left| \frac{1}{N} \sum_{n=1}^N f_1 \otimes f_2(U^n(x, y)) e^{2\pi int} \right| \, d (\mu \otimes \mu)_c(x, y) \, d\mu(c) = 0. \end{equation}
Since 
\[ \int \limsup_{N \to \infty} \sup_{t \in \RR} \left| \frac{1}{N} \sum_{n=1}^N f_1 \otimes f_2(U^n(x, y)) e^{2\pi int} \right| \, d(\mu \otimes \mu)_c(x, y) \]
is $\mu$-a.e. non-negative function of $c$ that is measurable with respect to $\mu$, we can deduce from ($\ref{ergDecomp}$) that $(\ref{goal})$ equals 0.
\end{proof}
Because of lemma $\ref{CLperpKperp}$, we can show that the average $(\ref{Cubes})$ converges to $0$ a.e., which proves theorem $\ref{CL}$.
\end{proof}
\textbf{Remark:} A result similar to Lemma $\ref{CLperpKperp}$ was proven by D. Rudolph in \cite{Rudolph}. In his work, the Conze-Lesigne algebra referred is the maximal isometric extension of the Kronecker factor, which is $\mathcal{CL}$ in Theorem $\ref{Duncan}$ of this paper.

\section{Case $f_1 \in \mathcal{Z}_2^\perp$, when $a, b \in \ZZ$.}\label{sec:general}

Here, we will prove the uniform Wiener Wintner double recurrence property for any $a, b \in \ZZ$, which is stated precisely as follows.
\begin{theorem}\label{generalCL}
Let $(X, \mathcal{F}, \mu, T)$ be an ergodic dynamical system, and $f_1, f_2 \in L^\infty(X)$, and $\| f_2 \|_\infty = 1$. If $f_1 \in \mathcal{Z}_2^\perp$, then
\[ \limsup_{N \to \infty} \sup_{t \in \RR} \left| \frac{1}{N} \sum_{n=1}^N f_1(T^{an}x) f_2(T^{bn}x) e^{2\pi i n t} \right| = 0 \]
for $\mu$-a.e. $x \in X$, and for any pair of integers $a$ and $b$.
\end{theorem}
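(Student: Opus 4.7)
The plan is to mirror the second proof of Theorem \ref{CL}, replacing the transformation $U = T \otimes T^2$ by $U = T^a \otimes T^b$ on $(X^2, \mu \otimes \mu)$. Step 1 (double van der Corput): apply $(\ref{vcd-ww-lim})$ with $a_n = f_1(T^{an}x) f_2(T^{bn}x)$ and then Cauchy--Schwarz to bound the Wiener-Wintner supremum by
\[
\frac{C}{H} + \left( \frac{C}{H} \sum_{h=1}^H \limsup_{N \to \infty} \left| \frac{1}{N} \sum_{n=1}^N F_{1,h}(T^{an}x)\, F_{2,h}(T^{bn}x) \right|^2 \right)^{1/2},
\]
where $F_{1,h}(x) = f_1(x) f_1(T^{ah}x)$ and $F_{2,h}(x) = f_2(x) f_2(T^{bh}x)$. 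Apply $(\ref{vcd-lim})$ to the inner squared average in the parameter $k$, generating fourfold products of the form $(F_{1,h} \cdot F_{1,h} \circ T^{ak})(T^{an}x)\cdot (F_{2,h} \cdot F_{2,h} \circ T^{bk})(T^{bn}x)$. Bourgain's double recurrence theorem ensures the existence of the $N$-limit pointwise, so we may integrate, apply dominated convergence, and then invoke the mean ergodic theorem for the single transformation $T$ (using that $T$ is measure-preserving) to evaluate the inner limit as a product integral on $X \times X$.

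Step 2 (reduction to cubes): The double limit rewrites as
\[
\iint (f_1 \otimes f_2)(x,y)\,(f_1 \otimes f_2)(U^h(x,y))\,(f_1 \otimes f_2)(U^k(x,y))\,(f_1 \otimes f_2)(U^{h+k}(x,y))\, d(\mu \otimes \mu),
\]
with $U = T^a \otimes T^b$. Disintegrate $\mu \otimes \mu$ into $U$-ergodic components $(\mu \otimes \mu)_c$, set $H = K$, and apply the cubes inequality $(\ref{cubes-variant})$ on each fiber: the quadruple average is dominated by
\[
\sup_{t \in \RR} \left| \frac{1}{H} \sum_{h=1}^{2(H-1)} (f_1 \otimes f_2)(U^h(x,y))\, e^{2\pi i h t} \right|^2.
\]
Thus it suffices to show that, for $\mu$-a.e.\ component $c$, the function $f_1 \otimes f_2$ lies in the orthogonal complement of the Kronecker factor of $U$ with respect to $(\mu \otimes \mu)_c$, after which the uniform Wiener-Wintner theorem (Theorem \ref{uniformWW}) combined with Lemma \ref{spectral} finishes the argument.

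Step 3 (the Kronecker-orthogonality of $f_1 \otimes f_2$ under $U = T^a \otimes T^b$): by Lemma \ref{spectral}, Fubini, and the ergodic decomposition, this reduces to showing that for $\mu$-a.e.\ $x$,
\[
\int \limsup_{N \to \infty} \sup_{t \in \RR} \left| \frac{1}{N} \sum_{n=1}^N f_1(T^{an}x)\, f_2(T^{bn}y)\, e^{2\pi i n t} \right| d\mu(y) = 0.
\]
In the $a=1$, $b=2$ case this followed immediately from Lemma 8 of \cite{AssaniPresser} with the bound $C N_3(f_1)^2$, but that estimate was tailored to first-coordinate transformation $T$, not $T^a$. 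This is the main obstacle: to invoke the return-times machinery with $T^a$ in the first coordinate and $T^b$ in the second, we need $T^a$ to be ergodic, which fails in general since $T$ is only assumed ergodic, and moreover we need the hypothesis $f_1 \in \mathcal{Z}_2^\perp(T)$ to pass to the analogous hypothesis $f_1 \in \mathcal{Z}_2^\perp(T^a)$ on each $T^a$-ergodic component.

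The resolution I would pursue is to decompose $\mu$ into $T^a$-ergodic components, apply the return-times estimate of \cite{AssaniPresser} on each component with respect to the ergodic transformation $T^a$ and the measure-preserving system $(X, T^b)$, and then argue that $\mathcal{Z}_2^\perp(T)$ is preserved when restricting to ergodic components of $T^a$---equivalently, that $N_3(f_1)$ computed relative to $T^a$ on each component vanishes when $\vertiii{f_1}_3 = 0$ relative to $T$. Once this orthogonality passes to the power, assembling the components back via the decomposition and integrating in $x$ yields the desired vanishing of the integral, completing the proof exactly as in the second proof of Theorem \ref{CL}.
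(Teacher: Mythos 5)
Your skeleton (double van der Corput, reduction to a cube average for $U = T^a \otimes T^b$, the cubes inequality $(\ref{cubes-variant})$, and a Wiener--Wintner statement for $f_1 \otimes f_2$) matches the paper's, but there are two genuine gaps, and you have flagged only the second. The first is in Steps 1--2: you assert that the mean ergodic theorem evaluates
\[
\lim_{N}\int \frac{1}{N}\sum_{n=1}^{N}\bigl(F_{1,h}\cdot F_{1,h}\circ T^{ak}\bigr)(T^{an}x)\,\bigl(F_{2,h}\cdot F_{2,h}\circ T^{bk}\bigr)(T^{bn}x)\,d\mu(x)
\]
as a product integral over $X\times X$ against $\mu\otimes\mu$. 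After shifting by $T^{-an}$ the inner average runs along $T^{(b-a)n}$, and since $T^{b-a}$ need not be ergodic the limit is $\int (F_{1,h}\cdot F_{1,h}\circ T^{ak})\,\expec(F_{2,h}\cdot F_{2,h}\circ T^{bk}\,|\,\mathcal{I}_{b-a})\,d\mu$, not a product of integrals; your displayed identity in Step 2 is false whenever $T^{b-a}$ has nontrivial invariant sets. The paper repairs this with Lemma \ref{integralKernel}: the conditional expectation onto the $T^{b-a}$-invariant sets is realized by the bounded Furstenberg--Weiss kernel $K_{b-a}(x,y)$, so the limit becomes $\iint(\cdots)K_{b-a}(x,y)\,d\mu\otimes\mu$, and the bounded factor $K_{b-a}$ is harmless in the remainder of the argument. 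Without the kernel (or some substitute) the passage to $X^2$ does not go through.

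Second, your Step 3 correctly identifies that Lemma 8 of \cite{AssaniPresser} is tailored to an ergodic first-coordinate transformation, but the ``resolution you would pursue'' --- decomposing into $T^a$-ergodic components and showing that $\vertiii{f_1}_3 = 0$ relative to $T$ forces the corresponding seminorm relative to $T^a$ on each component to vanish --- is precisely the nontrivial technical content, and you do not carry it out. The paper avoids this transfer problem entirely: it does not disintegrate $\mu\otimes\mu$ into $U$-ergodic components at all, but instead proves Lemma \ref{doubleFctnWW} directly for the (possibly non-ergodic) product system $(X^2,\mu\otimes\mu, T^a\times T^b)$ by two further applications of van der Corput, using the elementary subsequence comparison of Lemma \ref{HKPower},
\[
\limsup_{H \to \infty} \frac{1}{H}\sum_{h=1}^{H}\vertiii{f\cdot f\circ T^{ah}}_k^{2^k}\le |a|\,\vertiii{f}_{k+1}^{2^{k+1}},
\]
to convert the $T^a$-shifted seminorms back into the $T$-seminorm $\vertiii{f_1}_3$. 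That is the mechanism you are missing; with it, no component-wise version of $\mathcal{Z}_2^\perp$ is ever needed.
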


Unlike the case in Theorem $\ref{CL}$, $T^{b-a}$ may no longer be ergodic. We will first prove various lemmas to overcome this obstacle.

The following lemma will show that for any positive integer $s$, any $T^s$-invariant function $f$ can be expressed in terms of an integral kernel (that does not depend on $f$). The kernel first appeared in the work of H. Furstenberg and B. Weiss (Theorem 2.1 of \cite{FurstenbergWeiss}); we will present a detailed proof here. This kernel will be useful to characterize various conditional expectations.

\begin{lemma}\label{integralKernel}
Let $T$ be an ergodic map, and $s$ be a positive integer. Then there exist a disjoint partition of $T^s$-invariant sets $A_1, \ldots, A_l$ such that every $T^s$-invariant function $f$ can be expressed as an integral with respect to the kernel
\begin{equation}\label{theKernel}K(x, y) = l \sum_{k=1}^l \ind_{A_i}(x) \ind_{A_i}(y). \end{equation}
\end{lemma}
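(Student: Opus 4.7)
The plan is to construct the $A_i$ as the ergodic components of $T^s$ with respect to $\mu$. Because $T$ is ergodic, these components have a very rigid structure: there is an integer $l$ dividing $s$ and a $T^s$-invariant set $A_1$ of measure $1/l$ such that, up to null sets, $X$ partitions into $A_i := T^{-(i-1)}A_1$ for $i=1,\ldots,l$, each of which is $T^s$-invariant (since $T$ and $T^s$ commute).

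First I would establish this structure. If $B \in \mathcal{F}$ is any $T^s$-invariant set with $0 < \mu(B) < 1$, then $\bigcup_{k=0}^{s-1} T^{-k} B$ is $T$-invariant and therefore of full measure by ergodicity of $T$. Choose $l$ to be the smallest positive integer for which some $T^s$-invariant set $A_1$ of positive measure satisfies $T^{-l}A_1 = A_1$ mod null sets; necessarily $l \mid s$. Minimality of $l$ forces $A_1, T^{-1}A_1, \ldots, T^{-(l-1)}A_1$ to be pairwise disjoint mod null, and their union is $T$-invariant hence of full measure. Measure-preservation of $T$ then gives $\mu(A_i) = 1/l$ for every $i$, and a standard argument shows that $T^s$ is ergodic on each $(A_i, l \cdot \mu|_{A_i})$, so these are precisely the ergodic components of $T^s$.

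With the partition in hand, any $T^s$-invariant $f \in L^1(\mu)$ is constant $\mu$-a.e.\ on each $A_i$ with value $c_i = l \int_{A_i} f \, d\mu$. Substituting into $f = \sum_{i=1}^l c_i \ind_{A_i}$ yields
\[ f(x) = \sum_{i=1}^l \ind_{A_i}(x) \cdot l \int_{A_i} f(y) \, d\mu(y) = \int \left( l \sum_{i=1}^l \ind_{A_i}(x) \ind_{A_i}(y) \right) f(y) \, d\mu(y), \]
which is exactly the desired kernel representation with kernel $K(x,y)$ as in \eqref{theKernel}.

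The main obstacle is the first step: showing that $T$-ergodicity forces the ergodic components of $T^s$ to be equal-measure translates of a single set under the iterates $T^{-(i-1)}$. Without ergodicity of $T$, the partition could have components of unequal measure (or infinitely many pieces), and one could not absorb them into a single prefactor $l$ in $K$. Once this rigidity is in place, the kernel formula is just a rewriting of the conditional expectation onto the $T^s$-invariant $\sigma$-algebra, and no further ergodic-theoretic input is needed.
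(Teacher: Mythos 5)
Your overall strategy is the same as the paper's: show that the $\sigma$-algebra of $T^s$-invariant sets is atomic with $l$ atoms of equal measure $1/l$ that are cyclically permuted by $T$, and then observe that the kernel representation is just the conditional expectation onto that finite algebra. The second half of your argument (a $T^s$-invariant $f$ is a.e.\ constant on each atom with value $l\int_{A_i}f\,d\mu$, which rewrites as integration against $K$) is correct and matches the paper's equation for the conditional expectation. The problem is in your construction of the atoms.

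You define $l$ as ``the smallest positive integer for which some $T^s$-invariant set $A_1$ of positive measure satisfies $T^{-l}A_1=A_1$.'' Read literally, this gives $l=1$ with $A_1=X$, since $X$ itself is $T^s$-invariant, has positive measure, and satisfies $T^{-1}X=X$; the resulting ``partition'' is trivial and the conclusion fails whenever $T^s$ is not ergodic. Even if one adds the implicit restriction $0<\mu(A_1)<1$, minimality of the period does not force $A_1$ to be an atom, and your claim that ``minimality of $l$ forces $A_1,T^{-1}A_1,\dots,T^{-(l-1)}A_1$ to be pairwise disjoint'' is not the issue --- the issue is that the pieces can be too coarse. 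Concretely, suppose $s=4$ and the invariant algebra of $T^4$ has four atoms $P_1,P_2,P_3,P_4$ cyclically permuted by $T^{-1}$. No proper $T^4$-invariant set has period $1$ (that would violate ergodicity of $T$), but $A_1=P_1\cup P_3$ is $T^4$-invariant with $T^{-2}A_1=A_1$, so your minimal $l$ is $2$ and your partition is $\{P_1\cup P_3,\;P_2\cup P_4\}$. The $T^4$-invariant function $\ind_{P_1}$ is not constant on $P_1\cup P_3$, so the kernel identity fails for this partition. What you actually need is the finest such partition, e.g.\ take $A_1$ to be an atom (a $T^s$-invariant set of positive measure containing no $T^s$-invariant subset of strictly smaller positive measure) and let $l$ be the length of its orbit under $T$; the existence of atoms requires an argument, which is where the paper does its real work: it introduces the $T$-invariant, hence constant, integer-valued function $f_A=\sum_{k=0}^{s-1}\ind_{T^{-k}A}=s\mu(A)$, handles the case $f_A=1$ directly (the translates are disjoint and each is shown to be an atom), and reduces the case $f_A=k\ge 2$ to it by intersecting $k$ translates of $A$. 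Some such device is needed to rule out the scenario above; as written, your proof does not supply it.
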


\begin{proof}\footnote{The authors had an opportunity to discuss with B. Weiss about this proof, recently during the Ergodic Theory Workshop at UNC-Chapel Hill (April 3-6, 2014). The proof given here is longer than what Weiss provided, but our proof provides some information about the number $l$.}
If $T^s$ is ergodic, we are done, since $f$ is a constant. If not, suppose $A$ is a $T^s$-invariant subset of $X$ such that $0 < \mu(A) < 1$. Define a function
\[ f_A := \ind_{A} + \ind_{T^{-1}A} + \ind_{T^{-2}A} + \cdots + \ind_{T^{-(s-1)}A} \]
Observe that $f_A$ is $T$-invariant, and since $T$ is ergodic, $f_A$ must be a constant. Therefore, \[ \ind_{A} + \ind_{T^{-1}A} + \ind_{T^{-2}A} + \cdots + \ind_{T^{-(s-1)}A} = \int f_A d\mu = s\mu(A). \]
Note that $f_A \neq 0$, since $\mu(A) \neq 0$. Similarly, $f_A \neq s$, since $\mu(A) \neq 1$. If $f_A = 1$, then for $\mu$-a.e. $x \in X$, $\ind_A + \ind_{T^{-1}A} + \ind_{T^{-2}A} + \cdots + \ind_{T^{-(s-1)}A} = 1$, which implies that $\mu(T^{-i}A \cap T^{-j}A) = 0$ for any $0 \leq i < j \leq s-1$. Hence, $A, T^{-1}A, \ldots, T^{-(s-1)}A$ are disjoint, and furthermore, $\displaystyle \mu(X) = \sum_{k=0}^{s-1} \mu(T^{-k}A) = 1$, so $A, T^{-1}A, \ldots, T^{-(s-1)}A$ is a partition of $X$.

Now we show that $A$ (and similarly, $T^{-1}A, \ldots, T^{-(s-1)}A$) is an atom (of a collection of $T^s$-invariant sets). If $B \subset A$ and $B$ is $T^s$-invariant, then
\[ f_B = \ind_{B} + \ind_{T^{-1}B} + \ind_{T^{-2}B} + \cdots + \ind_{T^{-(s-1)}B} = s\mu(B) \leq s\mu(A) = 1 \]
The above holds only when $\mu(B) = 0$ or $\mu(B) = 1/s = \mu(A)$, which implies that $B = A$ $\mu$-a.e. For $k > 0$, we note that if $B \subset T^{-k}A$ is $T^s$-invariant, then $T^kB \subset A$ is also $T^s$-invariant, so if $\mu(B) \neq 0$, then $\mu(B) = \mu(T^k B) = \mu(A) = \mu(T^{-k}A)$, which proves that $T^{-k}A$ is also an atom for $k > 0$.

If $f$ is a $T^s$-invariant function, then we claim that
\begin{equation}\label{CE} f = \sum_{k=0}^{s-1} \left(\frac{ \int_{T^{-k}A} f d\mu }{\mu(T^{-k}A)} \right) \ind_{T^{-k}A} = s \sum_{k=0}^{s-1} \left( \int_{T^{-k}A} f d\mu\right) \ind_{T^{-k}A}  . \end{equation}
First, we note that $\mathcal{S}$, the $\sigma$-algebra generated by the sets $A, T^{-1}A, \ldots, T^{-(s-1)}A$, is a collection of finite union of sets $A, T^{-1}A, \ldots, T^{-(s-1)}A$. We also know that $f$ is $\mathcal{S}$-measurable, since
\[ \left\{ f > \lambda \right\} = \bigcup_{k=0}^{s-1} \left( \left\{ f > \lambda \right\} \cap T^{-k}A \right), \]
and we note that $\left\{ f > \lambda \right\} \cap T^{-k}A $ is $T^s$-invariant. Since $T^{-k}A$ is an atom for each $k$, we know  $\left\{ f > \lambda \right\} \cap T^{-k}A$ equals either $T^{-k}A$ or the empty set. This implies that $\left\{ f > \lambda \right\} \in \mathcal{S}$.

Since we know that $f$ is $\mathcal{S}$-measurable, we note that $f$ can be expressed as the expression above (a fact regarding conditional expectation). This proves $(\ref{CE})$, and if we denote $T^{-k}A = A_k$, then we have
\[f \circ T^s(x) = f(x) = \int s \sum_{k=0}^{s-1} \ind_{A_k}(y) \ind_{A_k}(x)f(y) d\mu(y) 
               = \int f(y) K(x, y) d\mu(y), \]
which proves the lemma for the case $f_A = 1$.

Now, suppose $f_A = k$ for $2 \leq k \leq s-1$. Let $B = T^{-l_1}A \cap T^{-l_2}A \cap \cdots T^{-l_k}A$, where $0 \leq l_1 < l_2 < \cdots < l_k \leq s-1$, and $\mu(B) > 0$ (we know such $B$ exists since $f_A = k$). Define
\[ f_B = \ind_B + \ind_{T^{-1}B} + \cdots + \ind_{T^{-(s-1)}B} \]
Note that $f_B$ is $T$-invariant, so it must be a constant that equals to $s\mu(B)$. Since $\mu(B) > 0$, we know that $f_B > 0$.

Also, note that each $T^{-j}B$ is disjoint for $0 \leq j \leq s-1$. Assume it is not. Then for some $0 \leq i < j \leq s-1$, there exists $x \in T^{-i}B \cap T^{-j}B$ such that $f_A(x) > k$, which is a contradiction. Therefore, we must have $f_B \leq 1$, and we can conclude that $f_B = 1$. By letting $A_i = T^{-i}B$, we have proved the lemma.
\end{proof}

The next lemma will provide a simple yet useful comparison between $\displaystyle{ \limsup_{H \to \infty} \frac{1}{H} \sum_{h=1}^H \vertiii{ f \cdot f \circ T^{ah} }_k^{2^k}}$ and $\displaystyle{ \vertiii{ f }_{k+1}^{2^{k+1}}}$. 

\begin{lemma}\label{HKPower}
Let $(X, \mathcal{F}, \mu, T)$ be an ergodic dynamical system, and $a \in \ZZ$. Then for any positive integer $k$, we have
\[ \limsup_{H \to \infty} \frac{1}{H} \sum_{h=1}^H \vertiii{ f \cdot f \circ T^{ah} }_k^{2^k} \leq |a| \vertiii{ f }_{k+1}^{2^{k+1}}. \]
\end{lemma}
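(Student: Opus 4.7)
The plan is to exploit the fact that when $a$ is a nonzero integer the arithmetic progression $\{ah : 1 \leq h \leq H\}$ sits inside $\{1, 2, \ldots, |a|H\}$ as an $|a|$-sparse subset. Since $\vertiii{f \cdot f \circ T^m}_k^{2^k} \geq 0$ for every $m$, enlarging the index set only increases the sum, and after rescaling by $|a|$ the Ces\`aro average along $\{ah\}$ is at most $|a|$ times the Ces\`aro average along all integers, which by Definition~\ref{HostKraSeminorms} tends to $\vertiii{f}_{k+1}^{2^{k+1}}$.

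First I would reduce to the case $a \geq 1$. For this, I would verify by induction on $k$ that the seminorms $\vertiii{\cdot}_k$ are $T$-invariant, i.e. $\vertiii{g \circ T}_k = \vertiii{g}_k$ for every $g \in L^\infty(\mu)$. At the base level ($k = 2$) this follows from the $T$-invariance of $\mu$ via the substitution $y = Tx$ inside $\int (g \circ T)(g \circ T^{h+1}) \, d\mu$; for the inductive step one uses the identity $(g \circ T) \cdot (g \circ T) \circ T^h = (g \cdot g \circ T^h) \circ T$ together with the inductive hypothesis. Applying this invariance via $T^{-ah}$ gives
\[ \vertiii{f \cdot f \circ T^{ah}}_k = \vertiii{(f \cdot f \circ T^{ah}) \circ T^{-ah}}_k = \vertiii{f \circ T^{-ah} \cdot f}_k = \vertiii{f \cdot f \circ T^{|a|h}}_k, \]
so the case $a < 0$ follows from the case $a > 0$ with $a$ replaced by $|a|$.

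Assuming now $a \geq 1$, the core estimate is simply
\[ \frac{1}{H} \sum_{h=1}^H \vertiii{f \cdot f \circ T^{ah}}_k^{2^k} = \frac{a}{aH} \sum_{h=1}^H \vertiii{f \cdot f \circ T^{ah}}_k^{2^k} \leq \frac{a}{aH} \sum_{m=1}^{aH} \vertiii{f \cdot f \circ T^m}_k^{2^k}, \]
where the inequality uses only non-negativity of the summands. Letting $H \to \infty$ (so that $aH \to \infty$ cofinally), the rightmost Ces\`aro average converges to $\vertiii{f}_{k+1}^{2^{k+1}}$ by the very definition of the seminorm, yielding $\limsup_{H \to \infty} \frac{1}{H}\sum_{h=1}^H \vertiii{f \cdot f \circ T^{ah}}_k^{2^k} \leq a\,\vertiii{f}_{k+1}^{2^{k+1}} = |a|\,\vertiii{f}_{k+1}^{2^{k+1}}$.

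The only delicate point is the $T$-invariance step, which is intuitively transparent but formally requires the short induction described above; apart from that, the argument is an elementary counting/non-negativity observation combined with passage to the limit, so no serious obstacle is anticipated.
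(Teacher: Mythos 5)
Your proof is correct and takes essentially the same route as the paper's: both rest on the observation that, by non-negativity of the summands, $\frac{1}{H}\sum_{h=1}^{H}\vertiii{f\cdot f\circ T^{ah}}_k^{2^k}\le |a|\cdot\frac{1}{|a|H}\sum_{m=1}^{|a|H}\vertiii{f\cdot f\circ T^{m}}_k^{2^k}$, followed by passing to the limit along the subsequence $(|a|H)_H$ of the defining Ces\`aro averages. The only difference is that you explicitly justify the reduction to $a\ge 1$ by proving $T$-invariance of the seminorms, a point the paper's one-line estimate passes over silently; this is a worthwhile bit of added care rather than a different approach.
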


\begin{proof}
Note that
\[ \frac{1}{H} \sum_{h=1}^H \vertiii{ f \cdot f \circ T^{ah} }_k^{2^k} \leq \frac{1}{H} \sum_{h=1}^{|a|H} \vertiii{ f \cdot f \circ T^{h} }_k^{2^k} = |a|\left( \frac{1}{|a|H} \sum_{h=1}^{|a|H} \vertiii{ f \cdot f \circ T^{h} }_k^{2^k} \right). \]
The sequence $\displaystyle{\left( \frac{1}{|a|H} \sum_{h=1}^{|a|H} \vertiii{ f \cdot f \circ T^{h} }_k^{2^k} \right)_H}$ is a subsequence of $\displaystyle{\left( \frac{1}{H} \sum_{h=1}^H \vertiii{ f \cdot f \circ T^{h} }_k^{2^k} \right)_H}$, which converges to $\vertiii{ f }_{k+1}^{2^{k+1}}$. By taking the limit supremum on both sides of the inequality above, we get
\[ \limsup_{H \to \infty} \frac{1}{H} \sum_{h=1}^H \vertiii{ f \cdot f \circ T^{ah} }_k^{2^k} 
\leq |a| \left( \lim_{H \to \infty} \frac{1}{|a|H} \sum_{h=1}^{|a|H} \vertiii{ f \cdot f \circ T^{h} }_k^{2^k} \right)
= |a| \vertiii{ f }_{k+1}^{2^{k+1}} \]
\end{proof}

The proof of Bourgain's double recurrence theorem in \cite{BourgainDR} relies on the classical uniform Wiener-Wintner theorem, which holds for the case when $T$ is ergodic. Here, we prove the uniform Wiener Wintner theorem that holds for the case when the measure preserving transformation is a power of ergodic map. This allows us to use Bourgain's double recurrence theorem without assuming that $T$ is a totally ergodic map.

\begin{theorem}\label{UniformWWPower}
Let $(X, \mathcal{F}, \mu, T)$ be an ergodic system. Suppose $f \in  \mathcal{Z}_1^\perp$. Then there exists a set of full measure $X_f$ such that for any $x \in X_f$ and for any integer $a$, we have
\[ \limsup_{N \to \infty} \sup_{t \in \RR} \left| \frac{1}{N} \sum_{n=1}^N f(T^{an}x) e^{2\pi i n t} \right| = 0 \]
\end{theorem}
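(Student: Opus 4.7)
The plan is to apply the van der Corput estimate $(\ref{vcd-ww-lim})$ once, identify the resulting correlation sums via Birkhoff's theorem for the (possibly non-ergodic) transformation $T^a$, and reduce matters to a Wiener-type statement about the spectral measure of $f$ for $T$.

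Setting $a_n = f(T^{an}x)$ in $(\ref{vcd-ww-lim})$ and observing that $T^a$ is measure-preserving, Birkhoff's theorem gives, for every $(a,h) \in \ZZ \times \NN$ and $\mu$-a.e.\ $x$,
\[
    \lim_{N \to \infty} \frac{1}{N} \sum_{n=1}^N (f \cdot f \circ T^{ah})(T^{an}x) = g_{a,h}(x) := \expec(f \cdot f \circ T^{ah} \mid \mathcal{I}_{T^a})(x).
\]
Intersecting over the countably many pairs $(a,h)$ yields a set $X_f$ of full $\mu$-measure on which this convergence holds for every $(a,h)$. On $X_f$, Cauchy-Schwarz in $h$ combined with $(\ref{vcd-ww-lim})$ gives, for every nonzero $a \in \ZZ$ and every $H \in \NN$,
\[
    \limsup_{N \to \infty} \sup_{t \in \RR} \left| \frac{1}{N} \sum_{n=1}^N f(T^{an}x) e^{2\pi i n t} \right|^2 \leq \frac{C}{H} + C \left( \frac{1}{H} \sum_{h=1}^H |g_{a,h}(x)|^2 \right)^{1/2},
\]
so it suffices to show that for each fixed nonzero $a$ the quantity $(1/H)\sum_{h=1}^H |g_{a,h}(x)|^2$ tends to $0$ as $H \to \infty$ for every $x \in X$.

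For this reduction, apply Lemma $\ref{integralKernel}$ with $s = |a|$ to write $X = \bigsqcup_{k=1}^{l} A_k$ where each $A_k$ is a $T^a$-ergodic atom. Since $g_{a,h}$ is $\mathcal{I}_{T^a}$-measurable, on $A_k$ it is the constant $\mu(A_k)^{-1}\int_{A_k} f \cdot f \circ T^{ah}\,d\mu$; using the $T^{ah}$-invariance of $\ind_{A_k}$, this integral is precisely $\hat{\sigma}_{g^{(k)},f}(ah)$, where $g^{(k)} := f\ind_{A_k}$ and $\sigma_{g^{(k)},f}$ is the cross-spectral measure of $g^{(k)}$ and $f$ for the unitary operator induced by $T$ on $L^2(X,\mu)$. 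Cauchy-Schwarz applied to the spectral projections $E(\{t\})$ of $T$ gives $|\sigma_{g^{(k)},f}(\{t\})|^2 \leq \sigma_{g^{(k)}}(\{t\})\,\sigma_f(\{t\})$, and since the hypothesis $f \in \mathcal{Z}_1^\perp$ forces $\sigma_f$ to be atomless, $\sigma_{g^{(k)},f}$ is itself an atomless complex measure on the torus. A Fubini calculation then yields, for any atomless finite complex measure $\rho$,
\[
    \lim_{H \to \infty} \frac{1}{H} \sum_{h=1}^H |\hat{\rho}(ah)|^2 = \rho \otimes \bar{\rho}(\{(t,s) : a(t-s) \in \ZZ\}) = 0,
\]
the shifted-diagonal set carrying no product-mass precisely because $\rho$ has no atoms. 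Applied to $\rho = \sigma_{g^{(k)},f}$ this gives the desired vanishing on each $A_k$, and hence on all of $X$; letting $H \to \infty$ in the preceding bound finishes the proof.

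The main obstacle is the spectral step: identifying $\int_{A_k} f \cdot f \circ T^{ah}\,d\mu$ as a cross-spectral Fourier coefficient on the full ergodic system $(X,T,\mu)$, propagating atomlessness from $\sigma_f$ to $\sigma_{g^{(k)},f}$ by Cauchy-Schwarz on spectral projections, and carrying out the Wiener-style calculation along the sparse subsequence $\{ah\}_h$. The remaining pieces---van der Corput, Birkhoff for the non-ergodic $T^a$, and the kernel structure from Lemma $\ref{integralKernel}$---then combine routinely.
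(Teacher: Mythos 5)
Your proposal is correct and follows essentially the same route as the paper's proof: van der Corput via $(\ref{vcd-ww-lim})$, identification of the correlation averages as conditional expectations onto the $T^a$-invariant sets, the atom decomposition of Lemma $\ref{integralKernel}$ to rewrite these as cross-spectral Fourier coefficients, and continuity of the relevant spectral measure forced by $f \in \mathcal{Z}_1^\perp$. The only (harmless) differences are that you keep the spectral measure attached to $T$ and evaluate the Wiener sum along the progression $a\ZZ$ directly by Fubini, and you propagate atomlessness by Cauchy--Schwarz on spectral projections, whereas the paper passes to the spectral measure of $T^a$, uses a subsequence-domination argument, and cites absolute continuity of $\sigma_{f,g,T^a}$ with respect to $\sigma_{f,T^a}$.
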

 
\begin{proof}
To show that the uniform convergence holds, we apply the inequality $(\ref{vcd-ww-lim})$ for $a_n = f(T^{an}x)$ pointwise, and use the pointwise ergodic theorem and the Cauchy-Schwarz inequality to obtain
\begin{align} \nonumber \limsup_{N \to \infty}\sup_{t \in \RR}\left| \frac{1}{N} \sum_{n=1}^N f(T^{an}x)e^{2\pi i n t} \right|^2 
&\leq \frac{C}{H} + \frac{C}{H} \sum_{h=1}^H \left| \lim_{N \to \infty} \frac{1}{N} \sum_{n=1}^N (f \cdot f \circ T^{ah})(T^nx) \right| \\
&\nonumber = \frac{C}{H} + \frac{C}{H} \sum_{h=1}^H \left| \expec_a(f \cdot f \circ T^{ah})(x) \right| \\
&\label{condExpec-H}\leq \frac{C}{H} + \left( \frac{C}{H} \sum_{h=1}^H \left| \expec_a(f \cdot f \circ T^{ah})(x)  \right|^2 \right)^{1/2},
\end{align}
where $\expec_a$ is the conditional expectation operator with respect to the sigma-algebra of $T^a$-invariant sets. Let $\gamma_x$ be a measure on $\RR$ such that $\hat{\gamma_x}(h) = \expec_a(f\cdot f \circ T^{ah})(x)$. We would like to show that that $\gamma_x$ is a continuous measure, since that would tells us that the limit above converges to $0$ by Wiener's lemma:
\[ \lim_{H \to \infty} \frac{1}{H} \sum_{h=1}^{H} |\hat{\gamma}_x(h)|^2 = \lim_{H \to \infty} \frac{1}{H} \sum_{h=1}^{H}|\expec_a(f \cdot f \circ  T^{ah}) |^2 = 0. \]
To show this, we use the integral kernel from lemma $\ref{integralKernel}$ so that for some positive integer $l$, we have
\begin{align*}
\expec_a(f \cdot f \circ T^{ah})(x) = l \int \ind_{A_i}(y)f(y)f(T^{ah}y) d\mu(y)
\end{align*}
where $A_i$ is one of the sets of the partition of $X$ given in lemma $\ref{integralKernel}$ such that $x \in A_i$. Set $g(y) = \ind_{A_i}(y)f(y)$. Then we notice that
\[ \int g(y) f(T^{ah}y) d\mu(y) = \hat{\sigma}_{f, g,  T^a}(h),\]
where $\sigma_{f, g, T^a}$ is the spectral measure of the functions $f$ and $g$ with respect to the transformation $T^a$. Note that $\sigma_{f, g, T^a}$ is absolutely continuous with respect to $\sigma_{f, T^a}$ (see, for example, Proposition 2.4 of \cite{Queffelec} for a proof). We claim that $\sigma_{f, T^a}$ is a continuous measure. Since $f \in \mathcal{Z}_1^\perp$ and $\sigma_{f, T}$ is a continuous measure, Wiener's lemma tells us that $\displaystyle{\lim_{H \to \infty}\frac{1}{H} \sum_{h=1}^H | \hat{\sigma}_{f, T}(h)|^2 = 0}$. Since $\displaystyle \left( \frac{1}{|a|H} \sum_{h=1}^{|a|H} | \hat{\sigma}_{f, T}(h) |^2\right)_H$ is a subsequence of $\displaystyle  \left( \frac{1}{H} \sum_{h=1}^{H} | \hat{\sigma}_{f, T}(h) |^2\right)_H$, we have
\[ \lim_{H \to \infty} \frac{1}{H} \sum_{h=1}^H | \hat{\sigma}_{f, T^a}(h) |^2 
= \lim_{H \to \infty} \frac{1}{H} \sum_{h=1}^{H} | \hat{\sigma}_{f, T}(ah) |^2 
\leq \lim_{H \to \infty} |a| \left( \frac{1}{|a|H} \sum_{h=1}^{|a|H} | \hat{\sigma}_{f, T}(h) |^2\right) = 0, \]
and again, by Wiener's lemma, $\sigma_{f, T^a}$ is a continuous measure. Hence, $\sigma_{f,g, T^a}$ is continuous, so we have
\begin{align}\label{CondExpSpec} 
0 &= \sigma_{f, g, T^a}(\left\{-\tau\right\}) = \lim_{H \to \infty} \frac{1}{H} \sum_{h=1}^H e^{-2\pi i h \tau}\int g(y) f(T^{ah}y) d\mu(y)\\
&\nonumber= \lim_{H \to \infty} \frac{1}{H} \sum_{h=1}^H  e^{-2\pi i h \tau}\expec_a(f \cdot f \circ T^{ah})(x) = \gamma_x(\left\{ -\tau \right\}) .\end{align}
Hence, $\gamma_x$ is a continuous measure. The proof is complete by letting $H \to \infty$ in $(\ref{condExpec-H})$.
\end{proof}
Here we introduce seminorms that are similar to the ones introduced in definition $\ref{HostKraSeminorms}$. These seminorms hold for any measure preserving system. 
\begin{mydef}
Suppose $(Y, \mathcal{Y}, \nu, U)$ is a measure preserving system, and $f \in L^\infty(\nu)$. We define seminorms $\| \la \cdot \ra \|_2$ and $\| \la \cdot \ra \|_3$ on $L^2(\nu)$ such that
\[ \| \la f \ra \|_2^4 = \lim_{H \to \infty} \frac{1}{H} \sum_{h=1}^H \left| \int f \cdot f \circ U^h \, d\nu \right|^2, \]
and
\[ \| \la f \ra \|_3^8 = \lim_{H \to \infty} \frac{1}{H} \sum_{h=1}^H \left\Vert \la f \cdot f \circ U^h \ra \right\Vert_2^4. \]
\end{mydef}
Certainly, if $U$ is ergodic, then $\| \la f \ra \|_k = \vertiii{ f }_k$ for $k = 2, 3$. We can easily verify that $\| \la \cdot \ra \|_2$ and $\| \la \cdot \ra \|_3$ are indeed seminorms. For example, we can show that $\| \la \cdot \ra \|_2$ is a positive semidefinite function by using the dominated convergence theorem and the pointwise convergence theorem:
\begin{align*}
\| \la f \ra \|_2^4 &= \lim_{H \to \infty}\frac{1}{H}\left| \int f \cdot f \circ U^h \, d\nu \right|^2\\
&= \lim_{H \to \infty}\frac{1}{H}\int \left(f \cdot f \circ U^h\right)(x) \, d\nu(x)\int{ \left(f \cdot f \circ U^h\right)(y)} \, d\nu(y)\\
&= \iint (f \otimes f)(x,y)d\mu \otimes \mu(x, y) \lim_{H \to \infty} \iint \frac{1}{H} \sum_{h=1}^H (f \otimes f)((U \otimes U)^h(x, y)) d\mu \otimes \mu(x, y) \\
&= \iint (f \otimes f)(x,y) \expec(f \otimes {f} | \mathcal{I}^2)(x, y) \, d\nu \otimes \nu(x, y) \\
&= \iint \expec(f \otimes {f} | \mathcal{I}^2)^2(x, y) \, d\nu \otimes \nu(x, y) \geq 0,
\end{align*}
where $\mathcal{I}^2$ is the sigma-algebra generated by $U \times U$-invariant sets.

Before we proceed to prove the theorem $\ref{generalCL}$, we will prove the following preliminary lemmas.
\begin{lemma}\label{singleFctn}
Suppose $(Y, \mathcal{Y}, \nu, U)$ is a measure preserving system, and $f \in L^\infty(\nu)$. Then
\begin{equation*}
\int \limsup_{N \to \infty} \left| \frac{1}{N} \sum_{n=1}^N f(U^ny)\right|^2 d\nu  \leq C \| \la f \ra \|_2^2
\end{equation*}
for $\nu$-a.e. $y \in Y$.
\end{lemma}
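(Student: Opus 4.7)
The plan is to combine van der Corput's inequality with Birkhoff's pointwise ergodic theorem, which holds even when $U$ is only measure-preserving. First I would apply $(\ref{vcd-lim})$ pointwise to $a_n = f(U^n y)$ (permissible since $\|f\|_\infty \leq 1$ by the standing assumption), obtaining for each $H \in \NN$ and $\nu$-a.e.\ $y \in Y$,
\[ \limsup_{N \to \infty} \left|\frac{1}{N}\sum_{n=1}^N f(U^n y)\right|^2 \leq \frac{C}{H} + \frac{C}{(H+1)^2}\sum_{h=1}^H (H+1-h)\,\Re\,\limsup_{N \to \infty} \frac{1}{N}\sum_{n=1}^N (f \cdot f \circ U^h)(U^n y). \]

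Birkhoff's theorem applied to the bounded function $f \cdot f \circ U^h$ tells me that the inner $\limsup$ is in fact a genuine limit, equal $\nu$-a.e.\ to $\expec(f \cdot f \circ U^h \mid \mathcal{I})(y)$, where $\mathcal{I}$ is the $\sigma$-algebra of $U$-invariant sets. Because every quantity on both sides of the displayed inequality is bounded by $1$, I can integrate directly over $Y$ without needing any Fatou or dominated convergence step, and since $\int \expec(g \mid \mathcal{I})\,d\nu = \int g\, d\nu$, the inequality
\[ \int \limsup_{N \to \infty} \left|\frac{1}{N}\sum_{n=1}^N f(U^n y)\right|^2 d\nu(y) \leq \frac{C}{H} + \frac{C}{H}\sum_{h=1}^H \left|\int f \cdot f \circ U^h\, d\nu\right| \]
follows (using $\sum_{h=1}^H (H+1-h)/(H+1)^2 \leq 1/(H+1)$ to absorb the weighted-averaging factor).

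Finally, the Cauchy--Schwarz inequality applied to the Cesàro sum gives
\[ \frac{1}{H}\sum_{h=1}^H \left|\int f \cdot f \circ U^h\, d\nu\right| \leq \left(\frac{1}{H}\sum_{h=1}^H \left|\int f \cdot f \circ U^h\, d\nu\right|^2\right)^{1/2}, \]
and passing $H \to \infty$ (the limit on the right exists by the very definition of $\|\la f \ra\|_2$) yields the claimed bound $C\,\|\la f \ra\|_2^2$. I do not anticipate a serious obstacle here; the only mild subtlety is that $U$ is not assumed ergodic, but Birkhoff's theorem still produces an a.e.\ limit, namely the conditional expectation onto $\mathcal{I}$, and this is exactly what makes the integration step go through cleanly.
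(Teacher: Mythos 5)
Your argument is correct and follows essentially the same route as the paper's proof: apply $(\ref{vcd-lim})$ pointwise to $a_n = f(U^ny)$, identify the inner limit via Birkhoff's theorem as $\expec(f\cdot f\circ U^h\mid\mathcal{I})(y)$, integrate both sides, use $\int \expec(g\mid\mathcal{I})\,d\nu = \int g\,d\nu$, apply Cauchy--Schwarz to the Ces\`aro average, and let $H\to\infty$. One small quibble: the inequality $\sum_{h=1}^H (H+1-h)/(H+1)^2 \le 1/(H+1)$ is false as a statement about the sum (the left side is roughly $1/2$), but the termwise bound $(H+1-h)/(H+1)^2 \le 1/H$ is what you actually need, and it is correct and is exactly what the paper uses.
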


\begin{proof}
We denote $F_h(x) = f(x) f\circ U^h(x)$. We apply the inequality $(\ref{vcd-lim})$ by setting $a_n = f(U^ny)$ pointwise and the pointwise ergodic theorem to obtain
\begin{align}\label{vdc-mpt}
\limsup_{N \to \infty}\left| \frac{1}{N} \sum_{n=1}^N f(U^ny) \right|^2
&\leq \frac{C}{H} + \frac{C}{(H+1)^2} \sum_{h=1}^H (H + 1 - h) \left(\lim_{N \to \infty} \frac{1}{N}\sum_{n=0}^{N } F_h(U^ny) \right)\\
&= \frac{C}{H} + \frac{C}{(H+1)^2} \sum_{h=1}^H (H + 1 - h)\expec(f \cdot f \circ U^h | \mathcal{I})(y) \nonumber
\end{align}
where $\mathcal{I}$ is the sigma-algebra generated by $U$-invariant sets. Note that $\displaystyle \int \expec(f \cdot f \circ U^h | \mathcal{I}) \, d\nu = \int f \cdot f \circ U^h \, d\nu$. So if we take the integral on both sides of the inequality ($\ref{vdc-mpt}$), we would obtain the following after applying the Cauchy-Schwarz inequality to the second term:
\begin{equation}\label{U-IntegIneq}
\int \limsup_{N \to \infty} \left| \frac{1}{N} \sum_{n=1}^N f(U^ny) \right|^2 \, d\nu \leq \frac{C}{H} + C\left(\frac{1}{H} \sum_{h=1}^H \left| \int f \circ f \cdot U^h \, d\nu \right|^2 \right)^{1/2}.
\end{equation}
Now we let $H \to \infty$ to obtain
\begin{equation*}
\int \limsup_{N \to \infty} \left| \frac{1}{N} \sum_{n=1}^N f(U^ny) \right|^2 \, d\nu \leq C\left( \lim_{H \to \infty} \frac{1}{H} \sum_{h=1}^H \left| \int f \cdot f \circ U^h \, d\nu \right|^2\right)^{1/2} = C \| \la f \ra \|_2^2
\end{equation*}
\end{proof}

\begin{lemma}\label{doubleFctn}
Suppose $(X, \mathcal{F}, \mu, T)$ is an ergodic dynamical system, and $f_1, f_2 \in L^\infty(\mu)$. Then for any integers $a$ and $b$,
\[ \int \limsup_{N \to \infty} \left| \frac{1}{N} \sum_{n=1}^N f_1(T^{an}x) f_2(T^{bn}y) \right|^2 d\mu \otimes \mu(x, y) \leq C|a| \vertiii{ f_1 }_2^2 \]
for $\mu$-a.e. $x, y \in X$.
\end{lemma}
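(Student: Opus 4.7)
The plan is to follow the same template as Lemma $\ref{singleFctn}$, but on the product system $(X \times X, \mu \otimes \mu, T^a \times T^b)$ with the ``diagonal'' function $f_1 \otimes f_2$. Specifically, I would apply the van der Corput inequality $(\ref{vcd-lim})$ pointwise to $a_n = f_1(T^{an}x) f_2(T^{bn} y)$. Setting $F_{1,h}(x) = f_1(x) f_1(T^{ah}x)$ and $F_{2,h}(y) = f_2(y) f_2(T^{bh}y)$, the cross term factors as
\[ a_n \overline{a_{n+h}} = F_{1,h}(T^{an}x) \, F_{2,h}(T^{bn}y), \]
and since $T^a \times T^b$ is a measure preserving transformation on $(X \times X, \mu \otimes \mu)$, the pointwise ergodic theorem guarantees that the corresponding inner time average converges $(\mu \otimes \mu)$-a.e.

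Next, I would integrate both sides of the van der Corput inequality over $X \times X$, use the reverse Fatou lemma $(\ref{revFatou})$ to pull the $\limsup_N$ outside the integral, and then invoke the dominated convergence theorem to move the integral inside the $\lim_N$. Because $T$ is measure preserving, each summand factorizes as
\[ \int F_{1,h}(T^{an}x)\, F_{2,h}(T^{bn}y)\, d\mu \otimes \mu(x,y) = \left( \int F_{1,h}\, d\mu \right)\left( \int F_{2,h}\, d\mu \right), \]
independently of $n$, so the time average collapses to this product. Bounding the second factor by $1$ via $\|f_2\|_\infty \leq 1$ and applying Cauchy--Schwarz in $h$ then yields
\[ \int \limsup_{N \to \infty} \left| \frac{1}{N} \sum_{n=1}^N f_1(T^{an}x) f_2(T^{bn}y) \right|^2 d\mu \otimes \mu \leq \frac{C}{H} + C\left( \frac{1}{H} \sum_{h=1}^H \left| \int f_1 \cdot f_1 \circ T^{ah}\, d\mu \right|^2 \right)^{1/2}. \]

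Finally, letting $H \to \infty$ and applying the subsequence trick from the proof of Lemma $\ref{HKPower}$ to the scalar sequence $h \mapsto \left| \int f_1 \cdot f_1 \circ T^h\, d\mu \right|^2$ bounds the parenthesized quantity by $|a|\, \vertiii{f_1}_2^4$, which yields the stated bound (in fact with the sharper constant $C\sqrt{|a|}\,\vertiii{f_1}_2^2 \leq C|a|\,\vertiii{f_1}_2^2$). I do not expect a serious obstacle here: the only care required is in justifying the exchanges of limits and integrals, but each one is routine because the sequences in play are uniformly bounded by $1$ and the relevant inner averages converge a.e.\ by the pointwise ergodic theorem. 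The point of the lemma, and the reason the factor $|a|$ appears, is precisely that $T^a$ need not be ergodic; the subsequence comparison in Lemma $\ref{HKPower}$ is what absorbs this defect.
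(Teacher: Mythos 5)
Your proposal is correct and follows essentially the same route as the paper: the paper simply cites the intermediate inequality $(\ref{U-IntegIneq})$ from Lemma $\ref{singleFctn}$ applied to the product system $(X^2, \mu\otimes\mu, T^a\times T^b)$ with $f = f_1\otimes f_2$, whereas you rederive that inequality inline, and both arguments then factor the integral, bound the $f_2$ factor by $1$, and invoke the subsequence comparison of Lemma $\ref{HKPower}$ to absorb the possible non-ergodicity of $T^a$. Your observation that the bound actually comes out as $C|a|^{1/2}\vertiii{f_1}_2^2$ is consistent with the paper's computation.
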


\begin{proof}
We denote $F_{1, h}(x) = f_1(x)f_1 \circ T^{ah}(x)$, and $F_{2, h}(x) = f_2(x)f_2\circ T^{bh}(x)$. If $U = T^a \times T^b$, then $(X^2, \mathcal{F} \otimes \mathcal{F}, \mu \otimes \mu, U)$ is a measure preserving system. Hence, we can use $(\ref{U-IntegIneq})$  in Lemma $\ref{singleFctn}$ to obtain, for any $H \in \NN$, 
\begin{align*} 
& \int \limsup_{N \to \infty} \left| \frac{1}{N} \sum_{n=1}^N f_1 \otimes f_2 (U^n(x, y)) \right|^2 d\mu \otimes \mu(x, y) \\
&\leq \frac{C}{H} + C \left(\frac{1}{H}\sum_{h=1}^H \left| \iint F_{1, h}(x)F_{2, h}(y) \, d\mu \otimes \mu(x, y) \right|^2 \right)^{1/2}\\
&\leq \frac{C}{H} + C \left(\frac{1}{H}\sum_{h=1}^H \left| \int f_1 \cdot f_1 \circ T^{ah} d\mu\right|^2 \right)^{1/2}.
\end{align*}
As we let $H \to \infty$, we obtain the desired result by lemma $\ref{HKPower}$.
\end{proof}

\begin{lemma}\label{doubleFctnWW}
Suppose $(X, \mathcal{F}, \mu, T)$ is an ergodic system, and $f_1, f_2 \in L^\infty(\mu)$. Then for any integers $a$ and $b$,
\[ \int \limsup_{N \to \infty} \sup_{t \in \RR} \left| \frac{1}{N} \sum_{n=1}^N f_1(T^{an}x) f_2(T^{bn}y) e^{2\pi i n t}\right|^2 d\mu \otimes \mu(x, y) \leq C|a|^{1/2}  \vertiii{ f_1 }_3^2 \]
for $\mu$-a.e. $x, y \in X$.
\end{lemma}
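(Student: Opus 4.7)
The plan is a two-level van der Corput argument: first use (\ref{vcd-ww-lim}) to strip the character $e^{2\pi int}$, then invoke Lemma~\ref{doubleFctn} to control the resulting double averages by the $\vertiii{\cdot}_{2}$-seminorm of the shifted products $f_{1}\cdot f_{1}\circ T^{ah}$, and finally apply Lemma~\ref{HKPower} to pass from an average over $h$ of such seminorms to $\vertiii{f_{1}}_{3}^{2}$.

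First I would set $a_{n}(x,y):=f_{1}(T^{an}x)f_{2}(T^{bn}y)$ and apply (\ref{vcd-ww-lim}) pointwise in $(x,y)$. With $F_{1,h}:=f_{1}\cdot f_{1}\circ T^{ah}$ and $F_{2,h}:=f_{2}\cdot f_{2}\circ T^{bh}$, the cross terms $a_{n}\overline{a_{n+h}}$ equal $F_{1,h}(T^{an}x)F_{2,h}(T^{bn}y)$, so for every $H\in\NN$,
\[
\limsup_{N}\sup_{t\in\RR}\Bigl|\frac{1}{N}\sum_{n=1}^{N}f_{1}(T^{an}x)f_{2}(T^{bn}y)e^{2\pi int}\Bigr|^{2}\leq\frac{C}{H}+\frac{C}{H}\sum_{h=1}^{H}\limsup_{N}\Bigl|\frac{1}{N}\sum_{n=1}^{N}F_{1,h}(T^{an}x)F_{2,h}(T^{bn}y)\Bigr|.
\]
Since every expression in sight is bounded by $1$, I would integrate against $\mu\otimes\mu$ using the reverse Fatou lemma~\ref{lem-revFatou} and then move each $|\cdot|$ to $|\cdot|^{2}$ via Cauchy--Schwarz on the probability space, obtaining
\[
\int\limsup_{N}\sup_{t}|{\cdot}|^{2}\,d\mu\otimes\mu\leq\frac{C}{H}+\frac{C}{H}\sum_{h=1}^{H}\Bigl(\int\limsup_{N}\Bigl|\frac{1}{N}\sum_{n=1}^{N}F_{1,h}(T^{an}x)F_{2,h}(T^{bn}y)\Bigr|^{2}d\mu\otimes\mu\Bigr)^{1/2}.
\]

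For each fixed $h$ I would then apply Lemma~\ref{doubleFctn} to the pair $(F_{1,h},F_{2,h})$, whose proof actually delivers the sharper constant $C|a|^{1/2}\vertiii{F_{1,h}}_{2}^{2}$, and follow it with two rounds of Cauchy--Schwarz over $h$ to go from a $\vertiii{\cdot}_{2}$-average to a $\vertiii{\cdot}_{2}^{4}$-average:
\[
\int\limsup_{N}\sup_{t}|{\cdot}|^{2}\,d\mu\otimes\mu\leq\frac{C}{H}+C|a|^{1/4}\Bigl(\frac{1}{H}\sum_{h=1}^{H}\vertiii{f_{1}\cdot f_{1}\circ T^{ah}}_{2}^{4}\Bigr)^{1/4}.
\]
Finally, Lemma~\ref{HKPower} with $k=2$ yields $\limsup_{H}\frac{1}{H}\sum_{h=1}^{H}\vertiii{f_{1}\cdot f_{1}\circ T^{ah}}_{2}^{4}\leq|a|\,\vertiii{f_{1}}_{3}^{8}$, so letting $H\to\infty$ produces the claimed estimate $C|a|^{1/4}\cdot|a|^{1/4}\,\vertiii{f_{1}}_{3}^{2}=C|a|^{1/2}\vertiii{f_{1}}_{3}^{2}$.

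The main obstacle is purely computational bookkeeping: the factor $|a|$ enters at two separate stages (once through Lemma~\ref{doubleFctn}, once through Lemma~\ref{HKPower}), so Cauchy--Schwarz must be applied exactly the right number of times in between for the intermediate seminorm powers to telescope into $\vertiii{f_{1}}_{3}^{2}$; in particular one should use the tighter $|a|^{1/2}$ form of Lemma~\ref{doubleFctn} (read off from its proof) rather than the stated $|a|$ in order to land on the exponent $|a|^{1/2}$ asserted in the lemma. All integrability interchanges are trivial since $\|f_{1}\|_{\infty},\|f_{2}\|_{\infty}\leq 1$.
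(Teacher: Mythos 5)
Your proposal is correct and follows essentially the same route as the paper's proof: apply ($\ref{vcd-ww-lim}$) to strip the character, integrate and pass to squares via Cauchy--Schwarz, invoke Lemma $\ref{doubleFctn}$ for each fixed $h$, and finish with Cauchy--Schwarz in $h$ together with Lemma $\ref{HKPower}$. Your careful tracking of the power of $|a|$ (reading the sharper $|a|^{1/2}$ bound off the proof of Lemma $\ref{doubleFctn}$ rather than its stated $|a|$ form, which would only yield $|a|^{3/4}$) is a nice touch that lands exactly on the stated exponent, though it is immaterial in the end since the estimate is only ever applied when $\vertiii{f_1}_3 = 0$.
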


\begin{proof}
We denote $F_{1, h}(x) = f_1(x)f_1 \circ T^{ah}(x)$, and $F_{2, h}(x) = f_2(x)f_2\circ T^{bh}(x)$. By applying the inequality ($\ref{vcd-ww-lim}$) for $a_n = f_1(T^{an}x)f_2(T^{bn}y)$ pointwise, we obtain
\begin{align*}
\limsup_{N \to \infty} \sup_{t \in \RR} \left| \frac{1}{N} \sum_{n=1}^N f_1(T^{an}x)f_2(T^{bn}y) e^{2\pi i n t}\right|^2  &\leq \frac{C}{H} + \frac{C}{H} \sum_{h=1}^H \limsup_{N \to \infty}\left| \frac{1}{N} \sum_{n=1}^N F_{1, h}(T^{an}x) F_{2, h}(T^{bn}y) \right|.
\end{align*}
Again, if we set $U = T^a \times T^b$, then $(X^2, \mathcal{F} \otimes \mathcal{F}, \mu \otimes \mu, U)$ is a measure preserving system. Hence, Birkhoff's pointwise ergodic theorem asserts that the average $\displaystyle{ \frac{1}{N} \sum_{n=1}^N F_{1, h} \otimes F_{2, h}(U^n(x, y))}$ converges $\mu \otimes \mu$-a.e. By the Cauchy-Schwarz inequality, we have
\begin{align*}
&\int \limsup_{N \to \infty}\sup_{t \in \RR} \left| \frac{1}{N} \sum_{n=1}^N f_1(T^{an}x)f_2(T^{bn}y) e^{2\pi i n t}\right|^2 d\mu \otimes \mu(x, y)\\
&\leq \frac{C}{H} + \frac{C}{H} \sum_{h=1}^H \int \lim_{N \to \infty}\left| \frac{1}{N} \sum_{n=1}^N F_{1, h}(T^{an}x)F_{2, h}(T^{bn}y) \right| \, d\mu \otimes \mu(x, y)\\
&\leq \frac{C}{H} +  C\left(\frac{1}{H} \sum_{h=1}^H \int \lim_{N \to \infty}\left| \frac{1}{N} \sum_{n=1}^N F_{1, h}(T^{an}x)F_{2, h}(T^{bn}y) \right|^2 d\mu \otimes \mu(x, y) \right)^{1/2}.
\end{align*}
By lemma $\ref{doubleFctn}$, we know that 
\[\int \lim_{N \to \infty}\left| \frac{1}{N} \sum_{n=1}^N F_{1, h}(T^{an}x)F_{2, h}(T^{bn}y) \right|^2 d\mu \otimes \mu(x, y) \leq C|a| \vertiii{F_{1, h} }_2^2 = C|a| \vertiii{F_{1, h} }_2^2. \]
Hence,
\begin{align*} &\int \limsup_{N \to \infty} \sup_{t \in \RR} \left| \frac{1}{N} \sum_{n=1}^N f_1(T^{an}x) f_2(T^{bn}y) e^{2\pi i n t}\right|^2 d\mu \\
&\leq \frac{C}{H} + C \left(\frac{|a|}{H} \sum_{h=1}^H \vertiii{ f_1 \cdot f_1 \circ T^{ah} }_2^2 \right)^{1/2} \leq \frac{C}{H} + C \left( \frac{|a|^2}{H} \sum_{h=1}^H \vertiii{ f_1 \cdot f_1 \circ T^{ah} }_2^4 \right)^{1/4}.
\end{align*}
Let $H \to \infty$, and apply lemma $\ref{HKPower}$ to obtain the desired result.
\end{proof}
Now we are ready to prove the theorem $\ref{generalCL}$. The beginning of the proof is very similar to the second proof of the theorem $\ref{CL}$, where we apply the inequalities $(\ref{vcd-ww-lim})$ and $(\ref{vcd-lim})$, as well as the double recurrence theorem and the mean ergodic theorem. Then we use the integral kernel from the lemma $\ref{integralKernel}$ to obtain the integral expression for the upper bound, and then we use the inequality $(\ref{cubes-variant})$ and the lemma $\ref{doubleFctnWW}$ to obtain the desired result.

\begin{proof}[Proof of Theorem $\ref{generalCL}$]
We denote $F_{1, h}(x) = f_1(x)f_1 \circ T^{ah}(x)$, and $F_{2, h}(x) = f_2(x)f_2\circ T^{bh}(x)$. We apply the inequality ($\ref{vcd-ww-lim}$) by setting $a_n = f_1(T^{an}x)f_2(T^{bn}x)$, we obtain the following for all $H \in \NN$:
\begin{align*}
& \limsup_{N \to \infty} \sup_{t \in \RR} \left| \frac{1}{N} \sum_{n=1}^N f_1(T^{an}x)f_2(T^{bn}x) e^{2\pi i n t} \right|^2 \\
&\leq \frac{C}{H} + \frac{C}{H} \sum_{h = 1}^H \limsup_{N \to \infty} \left| \frac{1}{N} \sum_{n=1}^{N} F_{1, h}(T^{an}x) F_{2, h}(T^{bn}x) \right|\\
&\leq \frac{C}{H} + \left( \frac{C}{H}  \sum_{h = 1}^H \limsup_{N \to \infty} \left| \frac{1}{N} \sum_{n=1}^{N} F_{1, h}(T^{an}x) F_{2, h}(T^{bn}x) \right|^2 \right)^{1/2},
\end{align*}
where the second inequality is the consequence of the Cauchy-Schwarz inequality. Note that we can  apply the inequality $(\ref{vcd-lim})$ on the average $\displaystyle{ \left| \frac{1}{N} \sum_{n=1}^{N} F_{1, h}(T^{an}x) F_{2, h}(T^{bn}x) \right|^2}$ to obtain the following bound for $0 < K < N$:
\begin{align*}
& \limsup_{N \to \infty} \left| \frac{1}{N} \sum_{n=1}^{N} F_{1, h}(T^{an}x) F_{2, h} (T^{bn}x) \right|^2 \\
&\leq \frac{C}{K} + \frac{C}{(K+1)^2} \sum_{k=1}^K (K + 1 - k) \left( \limsup_{N \to \infty} \frac{1}{N} \sum_{n=1}^{N}\left( F_{1, h}\cdot F_{1, h} \circ T^{ak} \right)(T^{an}x)  \left( F_{2, h} \cdot F_{2, h} \circ T^{bk} \right) (T^{bn}x) \right).
\end{align*}
Note that the average
\[ \frac{1}{N} \sum_{n=1}^{N} \left( F_{1, h}\cdot F_{1, h} \circ T^{ak} \right)(T^{an}x) \cdot \left( F_{2, h} \cdot F_{2, h} \circ T^{bk} \right)(T^{bn}x) \]
converges as $N \to \infty$ by Bourgain's a.e. double recurrence theorem in \cite{BourgainDR}. Therefore,
\begin{align*}
& \int \limsup_{N \to \infty} \sup_{t \in \RR} \left| \frac{1}{N} \sum_{n=1}^N f_1(T^{an}x)f_2(T^{bn}x) e^{2\pi i n t} \right|^2 d\mu(x) \\
&\leq \frac{C}{H} + \frac{C}{H} \int \left( \sum_{h = 1}^H \limsup_{N \to \infty} \left|  \frac{1}{N} \sum_{n=1}^{N} F_{1, h}(T^{an}x) F_{2, h} (T^{bn}x) \right|^2 \right)^{1/2} d\mu(x)\\
&\leq \frac{C}{H} + \frac{C}{H}\left(  \sum_{h = 1}^H \int \limsup_{N \to \infty} \left|  \frac{1}{N} \sum_{n=1}^{N} F_{1, h}(T^{an}x) F_{2, h} (T^{bn}x) \right|^2 d\mu(x) \right)^{1/2} \mbox{ (by H\"older's inequality)}\\
&\leq \frac{C}{H} + \frac{C}{H}\left( \sum_{h=1}^H \left( \frac{C}{K} + \frac{C}{(K+1)^2} \sum_{k=1}^K (K + 1 - k) \right. \right. \\
&\left. \left. \left(  \lim_{N \to \infty}  \int \frac{1}{N} \sum_{n=1}^{N} \left(F_{1, h} \cdot F_{1, h} \circ T^{ak} \right)(T^{an}x)  \left(F_{2, h} \cdot F_{2, h} \circ T^{bk} \right) (T^{bn}x) \, d\mu(x) \right) \right) \right)^{1/2}.
\end{align*}
Since $T^a$ is a measure preserving transformation, we can apply the mean ergodic theorem to obtain
\begin{align*}
& \lim_{N \to \infty} \int \frac{1}{N} \sum_{n=1}^{N} \left(F_{1, h} \cdot F_{1, h} \circ T^{ak} \right)(T^{an}x)\left(F_{2, h} \cdot F_{2, h} \circ T^{bk} \right)(T^{bn}x)\, d\mu(x) \\
&= \lim_{N \to \infty} \int  \left(F_{1, h} \cdot F_{1, h} \circ T^{ak} \right)(x) \left( \frac{1}{N} \sum_{n=1}^{N}\left(F_{2, h} \cdot F_{2, h} \circ T^{bk} \right)(T^{(b-a)n}x) \right) \, d\mu(x) \\
&= \int \left(F_{1, h} \cdot F_{1, h} \circ T^{ak} \right)(x) \expec(F_{2, h} \cdot F_{2, h} \circ T^{bk} | \mathcal{I}_{b-a} )(x) d\mu(x),
\end{align*}
where $\mathcal{I}_{b-a}$ is the $\sigma$-algebra generated by $T^{b-a}$-invariant sets. By lemma $\ref{integralKernel}$, there exists a positive integer $l_{b-a}$ and partition $A_1, \ldots, A_{l_{b-a}}$ of $X$ such that
\[ \expec(F_{2, h} \cdot F_{2, h} \circ T^{bk} | \mathcal{I}_{b-a})(x) = \int \left(F_{2, h} \cdot F_{2, h} \circ T^{bk}\right)(y) K_{b-a}(x, y) d\mu(y), \]
where $\displaystyle{ K_{b-a}(x, y) = l_{b-a}\sum_{i=1}^{l_{b-a}} \ind_{A_i}(x) \ind_{A_i}(y) }$. Note that

\begin{align*}
&\iint \left(F_{1, h}(x) \cdot F_{1, h} \circ T^{ak} \right)(x)\left(F_{2, h} \cdot F_{2, h} \circ T^{bk} \right)(y) K_{b-a}(x, y)\, d\mu(x) d\mu(y) \\
&= \iint f_1 \otimes f_2(x, y) K_{b-a}(x, y)\, \left[ {f_1 \otimes f_2} (U^h(x, y)) \, {f_1 \otimes f_2}(U^k(x, y)) \, f_1 \otimes f_2(U^{h+k}(x, y)) \right]\,d\mu \otimes \mu(x, y),
\end{align*}
where $U = T^a \times T^b$ is a measure preserving transformation on $X^2$. Let $H = K$.
Note that, on the system $(X^2, \mu \otimes \mu, U)$, the inequality ($\ref{cubes-variant}$) tells us that we have
\begin{align}\label{AACubes} & \nonumber \frac{1}{H(H+1)^2} \left| \sum_{h, k = 0}^{H-1} (H + 1 - k) f_1 \otimes f_2 (U^h(x, y)) \, f_1 \otimes f_2 (U^k(x, y))  \, f_1 \otimes f_2 (U^{h+k}(x, y)) \right|^2 \nonumber \\
&\leq  \sup_{t \in \RR} \left| \frac{1}{H} \sum_{h=1}^{2(H-1)} f_1 \otimes f_2 (U^h(x, y)) e^{2\pi iht} \right|^2. \end{align}
By lemma $\ref{doubleFctnWW}$, we know that
\begin{align*}
& \int \limsup_{H \to \infty} \sup_{t \in \RR} \left| \frac{1}{H} \sum_{h=1}^H f_1 \otimes f_2 (U^h(x, y)) e^{2\pi iht} \right|^2 \, d\mu \otimes \mu(x, y) \\
&= \int \limsup_{H \to \infty} \sup_{t \in \RR} \left|^2 \frac{1}{H} \sum_{h=1}^H f_1(T^{ah}x)f_2 (T^{bh}y) e^{2\pi iht} \right| d\mu \otimes \mu(x, y) \\
&\leq C|a|^{1/2} \vertiii{ f_1 }_3^2 = 0.
\end{align*}
Hence, by letting $H \to \infty$, we obtain
\begin{align*}
& \int \limsup_{N \to \infty} \sup_{t \in \RR} \left| \frac{1}{N} \sum_{n=1}^N f_1(T^{an}x) f_2(T^{bn}x) e^{2\pi i n t} \right|^2 \, d\mu(x) \\
&\leq \iint f_1 \otimes f_2(x, y) K(x, y) \\
&\cdot \lim_{H \to \infty} \frac{1}{H(H+1)^2} \sum_{h, k = 0}^{H-1} (H + 1 - k) \left( {f_1 \otimes f_2} (U^h(x, y)) \, {f_1 \otimes f_2}(U^k(x, y)) \, f_1 \otimes f_2(U^{h+k}(x, y)) \right) \, d\mu \otimes \mu(x, y) \\
&= 0.
\end{align*}
\end{proof}

\section{Case when $f_1 \in \mathcal{A}_2^\perp$, $a = 1$, $b = 2$}\label{sec:maxIsom}
In this section, we will show that we can obtain a pointwise estimate to the Wiener-Wintner double recurrence averages using the seminorm of $\mathcal{A}_2$. This means that we can bound the double recurrence averages using the seminorm $N_2(\cdot)$ without taking the integral of the norm of the averages. This was not the case when we used the Host-Kra seminorm  $\vertiii{ \cdot }_3$, where we obtained the norm bound
\[ \int \limsup_{N \to \infty}\sup_{t \in \RR} \left| \frac{1}{N} \sum_{n=1}^N f_1(T^nx)f_2(T^{2n}x)e^{2\pi i n t} \right| d\mu \leq C \vertiii{ f_1}_3^2.\]

We recall that $(X, \mathcal{F}, \mu, T)$ is an ergodic system, $f_1, f_2 \in L^\infty(\mu)$ such that $\| f_i \|_\infty \leq 1$ for both $i = 1, 2$.

\begin{theorem}\label{maxIsom} Let $(X, \mathcal{F}, \mu, T)$ be an ergodic dynamical system. Then there exist a universal constant $C$ such that
\[ \limsup_{N \to \infty} \sup_{t \in \RR} \left| \frac{1}{N} \sum_{n=1}^N f_1(T^nx) f_2(T^{2n}x) e^{2\pi i n t} \right| \leq C [N_2(f_1)]^2 \]
for $\mu$-a.e. $x \in X$.
\end{theorem}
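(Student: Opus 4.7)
My plan is to adapt the double van der Corput argument from the second proof of Theorem~\ref{CL}, but keep every estimate pointwise in $x$; the integrated $\vertiii{f_1}_3^2$-bound obtained there becomes the pointwise $N_2(f_1)^2$-bound, which is natural because $N_2$ is defined via conditional expectations onto $\mathcal{A}_1 = \mathcal{Z}_1$ rather than via $\mu$-integrals.

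I will first apply $(\ref{vcd-ww-lim})$ to $a_n = f_1(T^n x) f_2(T^{2n} x)$ pointwise. By Bourgain's double recurrence theorem, the Cesàro limits $L_h(x) = \lim_N \frac{1}{N}\sum_n F_{1,h}(T^n x) F_{2,h}(T^{2n}x)$ exist a.e., where $F_{1,h} = f_1 \cdot f_1 \circ T^h$ and $F_{2,h} = f_2 \cdot f_2 \circ T^{2h}$, giving
\[
\limsup_{N \to \infty} \sup_{t \in \RR} \left|W_N(x)\right|^2 \leq \frac{C}{H} + \frac{C}{H}\sum_{h=1}^H L_h(x).
\]
I will then apply $(\ref{vcd-lim})$ a second time to each $L_h(x)^2$, producing bounds in terms of iterated Bourgain averages $R_{h,k}(x) = \lim_N \frac{1}{N}\sum_n (F_{1,h}\cdot F_{1,h}\circ T^k)(T^n x)(F_{2,h}\cdot F_{2,h}\circ T^{2k})(T^{2n}x)$. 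In the case $a=1,b=2$ we have $T^{b-a} = T$ ergodic, so Lemma~\ref{integralKernel} provides the trivial kernel and the mean ergodic theorem rewrites the conditional expectations onto $T^{b-a}$-invariant sets as ordinary integrals.

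Next, transferring to the product system $(X^2, \mu\otimes\mu, U)$ with $U = T\times T^2$, as in the second proof of Theorem~\ref{CL} the iterated expressions correspond to 4-term products of $f_1\otimes f_2$ along the cubes of $U$ based at $(x,x)$. The cubes inequality $(\ref{cubes-variant})$ then bounds these by a supremum-over-$t$ Wiener-Wintner average of $f_1\otimes f_2$ under $U$. Instead of integrating and invoking $\vertiii{\cdot}_3$ as was done there, I will run a pointwise Wiener-Wintner/Wiener's-lemma argument in the spirit of Lemma~\ref{spectral} and Theorem~\ref{UniformWWPower}, working at the level of the product system, to control this supremum pointwise by a spectral quantity built from the Kronecker projections $\expec(F_{1,h}\mid \mathcal{A}_1)$.

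Averaging the resulting pointwise bound over $h$ (combined with Cauchy-Schwarz) and using
\[
N_2(f_1)^4 = \lim_{H\to\infty}\frac{1}{H}\sum_{h=1}^H \bigl\|\expec(F_{1,h}\mid \mathcal{A}_1)\bigr\|_2^2,
\]
then letting $H,K\to\infty$, will yield $\limsup_N \sup_t |W_N(x)| \leq CN_2(f_1)^2$ for $\mu$-a.e. $x$. The main obstacle is precisely the pointwise Wiener-Wintner step: converting the integrated cubes argument of Theorem~\ref{CL}, which produced $\vertiii{\cdot}_3^2$ after integration over $\mu$, into a genuinely pointwise estimate controlled by $\|\expec(F_{1,h}\mid \mathcal{A}_1)\|_2$. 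This demands that the Kronecker-spectral machinery used in Section~\ref{sec:general} be carried out at the level of individual $x$ rather than after an $L^2(\mu)$-integration, and it is here that the ergodicity of $T^{b-a}=T$ in the $a=1, b=2$ case is decisive.
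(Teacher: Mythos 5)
Your first step matches the paper's, but the rest of your plan diverges and contains a genuine gap at precisely the point you yourself flag as ``the main obstacle.'' After the first van der Corput application you propose a second application of $(\ref{vcd-lim})$ to each $L_h(x)^2$, a transfer to the product system $(X^2,\mu\otimes\mu,U)$ with $U=T\times T^2$, and the cubes inequality $(\ref{cubes-variant})$. The trouble is that the identification of the iterated Bourgain limits $R_{h,k}(x)$ with four-term products of $f_1\otimes f_2$ along the cubes of $U$ ``based at $(x,x)$'' is only valid after integrating in $x$: in the second proof of Theorem \ref{CL} it arises from the mean ergodic theorem applied under the integral sign, which turns $\lim_N\int\frac1N\sum_n G(T^nx)H(T^{2n}x)\,d\mu$ into $\int G\,d\mu\int H\,d\mu$. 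Pointwise, $R_{h,k}(x)$ genuinely depends on $x$ (through the Kronecker factor) and is not the cube expression evaluated at $(x,x)$; moreover the diagonal is a $\mu\otimes\mu$-null set, so a.e.\ statements for $U$ on $X^2$ say nothing about points of the form $(x,x)$. You acknowledge that converting the integrated cubes argument into a pointwise one is the crux, but you supply no mechanism for doing so, so the proof as proposed does not close.

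The paper's route is different and avoids the second van der Corput entirely. It proves Lemma \ref{charFactor}: if $F_1\in\mathcal{A}_1^\perp$ then $\frac1N\sum_{n=1}^N F_1(T^nx)F_2(T^{2n}x)\to 0$ for $\mu$-a.e.\ $x$, so the Kronecker factor is pointwise characteristic for Bourgain's double recurrence limit and one may replace $F_{i,h}$ by $\expec(F_{i,h}\mid\mathcal{A}_1)$. The projected limit is then computed explicitly in an eigenbasis $\{e_j\}$ of $\mathcal{A}_1$: the Ces\`aro limit of $\lambda_j^n\lambda_l^{2n}$ selects the resonant pairs $\lambda_j=\overline{\lambda}_l^{\,2}$, and Cauchy--Schwarz over those pairs bounds the limit pointwise by $\|\expec(F_{1,h}\mid\mathcal{A}_1)\|_2\,\|\expec(F_{2,h}\mid\mathcal{A}_1)\|_2\le\|\expec(F_{1,h}\mid\mathcal{A}_1)\|_2$, a quantity independent of $x$ --- this is exactly what makes the final bound pointwise. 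Averaging over $h$ and one more Cauchy--Schwarz then yields $[N_2(f_1)]^2$. If you want to rescue your outline, this eigenfunction/resonance computation is the missing ingredient you would need in place of the product-system cubes step.
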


\begin{proof}
We first apply the inequality $(\ref{vcd-ww-lim})$ to the sequence $a_n = f_1(T^nx)f_2(T^{2n}x)$ pointwise to obtain
\begin{align*} \limsup_{N \to \infty}\sup_{t \in \RR} \left| \frac{1}{N} \sum_{n=1}^N f_1(T^nx) f_2(T^{2n}x)e^{2\pi i n t} \right|^2
&\leq \frac{C}{H} + \frac{C}{H} \sum_{h=1}^H\limsup_{N \to \infty}\left| \frac{1}{N}  \sum_{n=1}^{N } (f_1 \cdot f_1\circ T^h)(T^{n}x)(f_2 \cdot f_2\circ T^{2h})(T^{2n}x)\right|.
\end{align*}
Our main task is to show that
\begin{equation}\label{vcdlimit} \limsup_{H \to \infty} \frac{1}{H} \sum_{h=1}^H \lim_{N \to \infty} \left| \frac{1}{N} \sum_{n=1}^N \left(f_1 \cdot f_1 \circ T^h\right)(T^nx) \, \left(f_2 \cdot f_2 \circ T^{2h}\right)(T^{2n}x) \right| \leq [N_2(f_1)]^2
\end{equation}
for $\mu$-a.e. $x \in X$. We will first prove the following lemma, which would allow us to take conditional expectations of $f_i \cdot f_i \circ T^h$ for $i = 1, 2$ to the Kronecker factor $\mathcal{A}_1$, i.e. it would suffice to show that
\begin{equation*}\label{vcdlimit-cond} \limsup_{H \to \infty} \frac{1}{H} \sum_{h=1}^H \lim_{N \to \infty} \left| \frac{1}{N} \sum_{n=1}^N \expec\left(f_1 \cdot f_1 \circ T^h | \mathcal{A}_1 \right)(T^nx) \, \expec\left(f_2 \cdot f_2 \circ T^{2h} | \mathcal{A}_1\right)(T^{2n}x) \right| \leq [N_2(f_1)]^2.
\end{equation*}

\begin{lemma}\label{charFactor}
Suppose $F_1, F_2 \in L^{\infty}(X)$ such that $\| F_1 \|_\infty, \| F_2 \|_\infty \leq 1$. If $F_1 \in \mathcal{A}_1^\perp$, then for $\mu$-a.e. $x \in X$,
\[ \lim_{N \to \infty} \frac{1}{N} \sum_{n=1}^N F_1(T^nx) F_2(T^{2n}x) = 0. \]
Thus, $\mathcal{A}_1$ is a pointwise characteristic factor of this average, i.e. 
\begin{equation}\label{projection} \lim_{N \to \infty} \frac{1}{N} \sum_{n=1}^N F_1(T^nx) F_2(T^{2n}x) = \lim_{N \to \infty} \frac{1}{N} \sum_{n=1}^N \expec(F_1 | \mathcal{A}_1)(T^nx)\expec(F_2 | \mathcal{A}_1)(T^{2n}x). \end{equation}
\end{lemma}
\begin{proof}
Since $\left| \frac{1}{N} \sum_{n=1}^N F_1(T^nx) F_2(T^{2n}x) \right|$ is non-negative, we can prove this lemma by showing
\[\int \limsup_{N \to \infty} \left| \frac{1}{N} \sum_{n=1}^N F_1(T^nx) F_2(T^{2n}x) \right| d\mu(x) = \lim_{N \to \infty} \int \left| \frac{1}{N} \sum_{n=1}^N F_1(T^nx) F_2(T^{2n}x) \right| d\mu(x) =  0\]
where the first equality holds by Bourgain's double recurrence theorem and Lebesgue's dominated convergence theorem. Note that Cauchy-Schwarz inequality asserts that
\[ \int \left| \frac{1}{N} \sum_{n=1}^N F_1(T^nx) F_2(T^{2n}x) \right| d\mu(x) \leq \left( \int \left| \frac{1}{N} \sum_{n=1}^N F_1(T^nx) F_2(T^{2n}x) \right|^2 d\mu(x) \right)^{1/2}. \]
We will proceed by applying the inequality $(\ref{vcd-lim})$ to $a_n = F_1(T^nx)F_2(T^{2n}x)$ pointwise. Observe that, by the inequality $(\ref{revFatou})$, we have
\begin{align*}
& \limsup_{N \to \infty}\int \left| \frac{1}{N} \sum_{n=1}^N F_1(T^nx) F_2(T^{2n}x) \right|^2 d\mu(x) \\
&\leq \int \limsup_{N \to \infty} \left| \frac{1}{N} \sum_{n=1}^N F_1(T^nx) F_2(T^{2n}x) \right|^2 d\mu(x) \\
&\leq \frac{C}{H} + \frac{C}{(H+1)^2} \sum_{h=1}^H (H + 1 - h) \int \limsup_{N \to \infty} \frac{1}{N} \sum_{n=1}^N (F_1 \cdot F_1 \circ T^h)(T^nx) (F_2 \cdot F_2 \circ T^{2h})(T^{2n}x) d\mu \\
&\leq \frac{C}{H} + \frac{C}{H} \sum_{h=1}^H \left| \int \limsup_{N \to \infty} \frac{1}{N} \sum_{n=1}^N (F_1 \cdot F_1 \circ T^h)(T^nx) (F_2 \cdot F_2 \circ T^{2h})(T^{2n}x) d\mu \right|\\
\end{align*}
Note that the limit inside the integral exists by the double recurrence theorem. Hence, the dominated convergence theorem tells us
\begin{align*}
& \limsup_{N \to \infty}\int \left| \frac{1}{N} \sum_{n=1}^N F_1(T^nx) F_2(T^{2n}x) \right|^2 d\mu(x) \\
&\leq  \frac{C}{H} + \frac{C}{H} \sum_{h=1}^H \left| \lim_{N \to \infty} \int  \frac{1}{N} \sum_{n=1}^N (F_1 \cdot F_1 \circ T^h)(T^nx) (F_2 \cdot F_2 \circ T^{2h})(T^{2n}x) d\mu \right|\\
&=  \frac{C}{H} + \frac{C}{H} \sum_{h=1}^H \left|\lim_{N \to \infty} \int  (F_1 \cdot F_1 \circ T^h)(x) \frac{1}{N} \sum_{n=1}^N (F_2 \cdot F_2 \circ T^{2h})(T^{n}x) d\mu \right|\end{align*}
Then we use the mean ergodic theorem and the Cauchy-Schwarz Inequality to obtain
\begin{align*}
\limsup_{N \to \infty}\int \left| \frac{1}{N} \sum_{n=1}^N F_1(T^nx) F_2(T^{2n}x) \right|^2 d\mu(x) 
&\leq  \frac{C}{H} + \frac{C}{H} \sum_{h=1}^H \| F_2 \|_\infty^2 \left| \int  (F_1 \cdot F_1 \circ T^h)(x) d\mu \right|\\
&\leq  \frac{C}{H} + C \left(\frac{1}{H} \sum_{h=1}^H \left| \int  (F_1 \cdot F_1 \circ T^h)(x) d\mu \right|^2 \right)^{1/2}\\
&=  \frac{C}{H} + C \left(\frac{1}{H} \sum_{h=1}^H \left| \hat{\sigma}_{F_1}(h)\right|^2 \right)^{1/2},
\end{align*}
where $\sigma_{F_1}$ is the spectral measure of $F_1$ with respect to the transformation $T$. Now we let $H \to \infty$ to obtain
\[ \limsup_{N \to \infty} \int \frac{1}{N} \sum_{n=1}^N \left| F_1(T^nx)F_2(T^{2n}x) \right|^2 d\mu(x)  \leq C\left( \lim_{H \to \infty} \frac{1}{H} \sum_{h=1}^H |\hat{\sigma}_{F_1}(h)|^2 \right)^{1/2}, \]
and because $F_1 \in \mathcal{A}_1^\perp$, the spectral measure $\sigma_{F_1}$ is continuous, so the Wiener's lemma implies the right hand side of above limit equals $0$.
\end{proof}
Now we will conclude the proof of Theorem $\ref{maxIsom}$. Set $F_{1, h} = f_1 \cdot f_1 \circ T^h$, and $F_{2, h} = f_2 \cdot f_2 \circ T^{2h}$. Denote
\[ P_N(F_{1, h}, F_{2, h}) = \frac{1}{N} \sum_{n=1}^N \expec(F_{1, h} | \mathcal{A}_1)\circ T^n \expec(F_{2, h} | \mathcal{A}_1) \circ T^{2n}. \]
By $(\ref{projection})$, it suffices to prove that $\displaystyle \limsup_{H \to \infty} \frac{1}{H} \sum_{h=1}^H \limsup_{N \to \infty}P_N(F_{1, h}, F_{2, h})(x) \leq [N_2(f_1)]^2$ for $\mu$-a.e. $x \in X$ in order to show $(\ref{vcdlimit})$.  Let $\left\{ e_j \right\}$ be an eigenbasis of $\mathcal{A}_1$, where $\lambda_j$ is a corresponding eigenvalue of $e_j$. Then we would have 
\[ \expec(F_{1, h} | \mathcal{A}_1)\circ T^n = \sum_{j=0}^\infty \left(\int F_{1, h} {e_j} \, d\mu \right) \lambda_j^n e_j \mbox{ and } \expec(F_{2, h} | \mathcal{A}_1)\circ T^{2n} = \sum_{l=0}^\infty  \left( \int F_{2, h} {e_l} \, d\mu \right) \lambda_l^{2n} e_l \] 
in the $L^2$-norm. Hence,
\[ \lim_{N \to \infty} P_N(F_{1, h}, F_{2, h}) = \lim_{N \to \infty} \frac{1}{N} \sum_{n=1}^N \sum_{j = 1}^\infty \sum_{l = 1}^\infty  \left( \int F_{1, h} \overline{e_j} \, d\mu \right) \left( \int F_{2, h}\overline{e_l} \, d\mu \right) \lambda_j^n \lambda_l^{2n} e_j e_l \]
in the $L^2$-norm. Note that for each $j$ and $l$, 
\[ \lim_{N \to \infty} \frac{1}{N} \sum_{n=1}^N \lambda_j^n \lambda_l^{2n} = \left\{ \begin{array}{ll} 
1 & \mbox{if } \lambda_j = \overline{\lambda}_l^2 \\
0 & \mbox{otherwise.} \end{array} \right. \]
Hence, if we denote $R = \left\{ (j, l_j) \in \NN^2 :  \lambda_j = \overline{\lambda}_{l_j}^2 \right\}$, then
\[ \lim_{N \to \infty} P_N(F_{1, h}, F_{2, h}) = 
\sum_{(j, l_j) \in R} \left( \int F_1 \overline{e_j} \, d\mu \right) \left( \int F_{2, h} \overline{e_{l_j}} \, d\mu \right) e_j e_{l_j} \]
in the $L^2$-norm. Note that the sequence 
\[ B_J = \left( \sum_{(j, l_j) \in R, j \leq J} \left( \int F_{1, h} \overline{e_j} \, d\mu \right) \left( \int F_{2, h} \overline{e_{l_j}} \, d\mu \right) e_j e_{l_j} \right)_J\]
converges to $\displaystyle{ \lim_{N \to \infty} P_N(F_{1, h}, F_{2, h})}$ in the $L^2$-norm as $J \to \infty$. Therefore, there exists a subsequence $(B_{J_k})_k$ that converges to $\displaystyle{ \lim_{N \to \infty} P_N(F_{1, h}, F_{2, h})(x)}$ for $\mu$-a.e. $x \in X$. Thus,
\begin{align*}
& \lim_{N \to \infty} \frac{1}{N} \sum_{n=1}^N \expec(F_{1, h} | \mathcal{A}_1)(T^nx) \expec(F_{2, h} | \mathcal{A}_1)(T^{2n}x) \\
&= \lim_{k \to \infty} \sum_{(j, l_j) \in R , j \leq J_k} \left( \int F_{1, h} \overline{e_j} \, d\mu \right) \left( \int F_{2, h} \overline{e_{l_j}} \, d\mu \right) e_j(x) e_{l_j}(x) \\
&\leq \lim_{k \to \infty} \left( \sum_{(j, l_j) \in R , j \leq J_k} \left| \int F_{1, h} \overline{e_j}\, d\mu \right|^2 \right)^{1/2} \left( \sum_{(j, l_j) \in R , j \leq J_k} \left| \int F_{2, h} \overline{e_{l_j}}\, d\mu \right|^2 \right)^{1/2} \mbox{ (Cauchy-Schwarz Inequality)}
\end{align*}
\begin{align*}
&\leq \left( \sum_{j = 1}^\infty \left| \int F_{1, h} \overline{\ e_j} \, d\mu \right|^2 \right)^{1/2} \left( \sum_{l = 1}^\infty \left| \int F_{2, h} \overline{e_{l}} \, d\mu \right|^2 \right)^{1/2} \\
&= \| \expec(F_{1, h} | \mathcal{A}_1) \|_2 \| \expec(F_{2, h} | \mathcal{A}_1) \|_2\\
&\leq \| \expec(F_{1, h} | \mathcal{A}_1) \|_2,
\end{align*}
since $\| f_2  \|_\infty \leq 1$. Therefore,
\begin{align*} 
& \limsup_{H \to \infty}\frac{1}{H} \sum_{h=1}^H \lim_{N \to \infty} \frac{1}{N} \sum_{n=1}^N \left(f_1 \cdot f_1 \circ T^h\right)(T^nx) \left(f_2 \cdot f_2 \circ T^{2h}\right)(T^{2n}x) \\
&\leq\limsup_{H \to \infty} \frac{1}{H} \sum_{h=1}^H \| \expec(f_1 \cdot f_1 \circ T^h | \mathcal{A}_1) \|_2 \\
&\leq \left( \limsup_{H \to \infty} \frac{1}{H} \sum_{h=1}^H \| \expec(f_1 \cdot f_1 \circ T^h | \mathcal{A}_1) \|_2^2 \right)^{1/2}
= [N_2(f_1)]^2,
\end{align*}
where the second inequality holds by the Cauchy-Schwarz inequality.
\end{proof}

\section{Case when both $f_1, f_2 \in \mathcal{Z}_2$}
\label{sec:CVinZ2}
Here we prove the convergence of double recurrence Wiener Wintner averages for the case where $f_1, f_2 \in \mathcal{Z}_2$. To do so, we will use the structural properties of nilsystems, which we shall discuss briefly.

Let $(X, \mathcal{F}, \mu, T)$ be an ergodic system. Recall that $X$ is called a \textit{k-step nilsystem} if $X$ is a homogeneous space of a $k$-step nilpotent Lie group $G$ (such a manifold is called a \textit{nilmanifold}). Let $\Lambda$ be a discrete cocompact subgroup of $G$ such that $X = G / \Lambda$. The outline of the proof of the following theorem is given in \cite{HostKraCubes}.
\begin{theorem}[B. Host, B. Kra \cite{HostKraCubes}]\label{CL-nil}
If $X$ is a Conze-Lesigne system, then it is the inverse limit of a sequence of $2$-step nilsystems.
\end{theorem}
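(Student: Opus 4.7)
The plan is to exploit the structure of a Conze-Lesigne system as an isometric extension of its Kronecker factor, then analyze the cocycle via the Conze-Lesigne equation, and finally recognize the resulting object as a 2-step nilmanifold (in the finite-dimensional case) or as an inverse limit of such (in general).

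First I would recall that by the Host-Kra structure theory, any system $X$ with $X = \mathcal{Z}_2$ can be written as a compact abelian group extension of its Kronecker factor $Z_1 = (Z,m_Z,R_\alpha)$. That is, $X \cong Z \times U$ with $U$ a compact abelian group (the fiber) and transformation $T(z,u) = (z + \alpha, u + \rho(z))$ for some measurable cocycle $\rho \colon Z \to U$. The fact that $\vertiii{\cdot}_3$ controls the system translates into an algebraic equation for $\rho$: for almost every $s \in Z$, there exist a constant $c(s) \in U$ and a measurable function $F_s \colon Z \to U$ so that
\begin{equation*}
\rho(z+s) - \rho(z) = c(s) + F_s(z+\alpha) - F_s(z),
\end{equation*}
which is the Conze-Lesigne equation. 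This equation is the principal algebraic output of the seminorm condition $\vertiii{f}_3 = 0 \Longleftrightarrow f \in \mathcal{Z}_2^\perp$ and is the bridge to nilpotency.

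Next I would reduce to the finite-dimensional case. The Kronecker system $Z$ is a rotation on a compact abelian group, hence an inverse limit of rotations on finite-dimensional tori (or compact abelian Lie groups) $Z^{(n)}$. By truncating the cocycle $\rho$ appropriately (projecting onto a countable family of characters of $U$ and tracking the resulting finite-dimensional factors), one produces a sequence of Conze-Lesigne sub-systems $X^{(n)}$ whose inverse limit is $X$, and each $X^{(n)}$ is of the form (finite-dimensional torus base) $\times$ (finite-dimensional torus fiber). So it suffices to show each $X^{(n)}$ is a 2-step nilsystem.

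For the finite-dimensional piece, the Conze-Lesigne equation above is precisely what allows one to build a 2-step nilpotent Lie group $G$ acting transitively on $X^{(n)}$. The idea is to let $G$ be generated by the fiber translations $\{(0,u) : u \in U\}$ and by the maps $(z,u) \mapsto (z+s, u + F_s(z))$ indexed by $s \in Z$; the Conze-Lesigne equation guarantees that the commutators of these generators are fiber translations (i.e. lie in the center), so $G$ is 2-step nilpotent. One then identifies $X^{(n)} = G/\Lambda$ for a discrete cocompact $\Lambda$, and the transformation $T$ with left multiplication by a fixed element of $G$, exhibiting $X^{(n)}$ as a 2-step nilsystem.

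The main obstacle is the measurable-to-continuous passage and the cocycle identification: the functions $F_s$ from the Conze-Lesigne equation are a priori only measurable and are defined only almost-everywhere in $s$, so one must invoke a selection / cocycle-straightening argument (Mackey-type) to produce a jointly measurable, and eventually continuous, group structure. Combined with the truncation argument to reduce to finite-dimensional fibers, this yields the 2-step nilsystem approximants and hence the desired representation of $X$ as their inverse limit.
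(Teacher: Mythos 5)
Your proposal follows essentially the same route as the paper, which itself only reproduces the Host--Kra outline: realize $X$ as a compact abelian group extension of the Kronecker factor with cocycle $\rho$ satisfying the Conze--Lesigne equation, build the $2$-step nilpotent group $G$ of skew transformations $(z,u)\mapsto(sz,uf(z))$, and identify $X=G/\Lambda$ with $T$ acting by translation by $(\beta,\rho)$. Your added remarks on reducing to finite-dimensional fibers and on the measurable-to-continuous straightening are exactly the technical points the paper defers to \cite{HostKraCubes}, so the two arguments agree in substance.
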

In the outline of the proof, $X$ is reduced to the case where $X$ is a group extension of the Kronecker factor $Z_1$ and torus $U$, with cocycle $\rho: Z_1 \to U$. A group $G$ is defined to be a family of transformations of $X = Z_1 \times U$, where $U$ is a finite dimensional torus and $Z_1$ is the Kronecker factor of $X$ that has the structure of compact abelian Lie group. If $g \in G$, $(z, u) \in X$, then
\[g\cdot(z, u) = (sz, uf(z))\]
where $s \in Z_1$ and $f: Z_1 \to U$ satisfy the Conze-Lesigne equation
\[ \rho(sz)\rho(z)^{-1} = f(Rz)f(z)^{-1}c \]
for some constant $c \in U$. It can be easily verified that $G$ is a $2$-step nilpotent group, and $T$ corresponds to $(\beta, \rho) \in G$, where $\beta \in Z_1$ such that if $\pi_1: Z_2 \to Z_1$ is a factor map, then $\pi_1(Tx) = \beta\pi_1(x)$. Furthermore, if $G$ is given a topology of convergence in probability, then we know that $G$ is a Lie group.\\[5pt]
The outline of the proof given in \cite{HostKraCubes} concludes by stating that $G$ acts on $X$ transitively, and $X$ can be identified with the nilmanifold $G/\Lambda$, where $\Lambda$ is a stabilizer group of a point $x_0 \in X$ (hence it is a discrete cocompact subgroup of $G$). Furthermore, $\mu$ is a Haar measure on $X$, and $T$ is a translation by the element $(\beta, \rho) \in G$. Hence, $T$ acts on $X$ by translation. We will use this fact to prove the convergence of the double recurrence Wiener Wintner average for the case when $f_1, f_2 \in \mathcal{Z}_2$.\\[5pt]
The following convergence result of Leibman will be used. We say $\left\{g(n)\right\}_{n \in \ZZ}$ is a \textit{polynomial sequence} if $g(n) = a_1^{p_1(n)} \cdots a_m^{p_m(n)}$, where $a_1, \ldots, a_m \in G$, and $p_1, \ldots, p_m$ are polynomials taking on integer value on the integers.

\begin{theorem}[A. Leibman \cite{Leibman}]\label{Leibman}Let $X = G/\Lambda$ be a nilmanifold and $\left\{ g(n) \right\}_{n \in \ZZ}$ be a polynomial sequence in $G$. Then for any $x \in X$ and continuous function $F$ on $X$, the average
\[ \frac{1}{N} \sum_{n=1}^N F(g(n)x) \]
converges as $N \to \infty$.
\end{theorem}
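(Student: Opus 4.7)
The plan is to prove convergence by induction on the nilpotency step $k$ of $G$, combined with a van der Corput-style reduction to reduce polynomial sequences of higher complexity to lower-degree sequences on lower-step factors.

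First, I would make standard reductions. Without loss of generality I can assume $G$ is connected (else pass to the connected component of the identity, noting that the orbit of $g(n)x$ lies in finitely many translates) and, by lifting to a universal cover and working modulo a discrete subgroup, I may assume $G$ is simply connected. Since trigonometric-type polynomials are dense in $C(X)$ by Stone--Weierstrass, it suffices to verify convergence for a dense class of continuous functions $F$.

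Second, I would set up the induction. The base case $k=1$ has $G$ abelian, so $X$ is a finite-dimensional torus and $g(n)x$ is an ordinary polynomial sequence in $\mathbb{T}^d$; convergence follows from Weyl's equidistribution theorem for polynomial sequences on tori. For the inductive step, let $G_k$ denote the last nontrivial subgroup in the lower central series, which is central in $G$. The projection $\pi\colon X = G/\Lambda \to Y := G/(G_k\Lambda)$ is a principal fibration with fiber a compact abelian group (a sub-torus of $G_k/(G_k\cap\Lambda)$), and $Y$ is a $(k-1)$-step nilmanifold. The induced sequence $\pi(g(n)x)$ is a polynomial sequence in $Y$, so by the inductive hypothesis averages of continuous functions pulled back from $Y$ converge.

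Third, I would handle the "vertical" part via Fourier analysis along the central fiber. Decompose $F$ into isotypic components under the fiber torus action, $F = \sum_\chi F_\chi$, where $\chi$ ranges over characters of the fiber and $F_\chi(gu) = \chi(u)F_\chi(g)$ for $u$ in the fiber. The $\chi=0$ component reduces to the base $Y$ and is handled by induction. For each nontrivial character $\chi$, I would apply van der Corput's inequality (Lemma~\ref{lem-vdc}) to the sequence $F_\chi(g(n)x)$: this expresses the squared average in terms of correlations indexed by $h$, and these correlations involve the "differenced" polynomial sequence $g(n+h)^{-1}g(n)$. A careful analysis—using the commutator structure of $G$ and the fact that $G_k$ is central—shows that the differenced sequence is polynomial of strictly lower complexity (either lower degree, or taking values in a lower-step subgroup). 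This enables a PET-type induction (Bergelson--Leibman) that terminates, yielding that each $\chi\neq 0$ component has vanishing Cesàro average.

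The main obstacle is executing the PET induction cleanly: tracking the decrease in complexity of the differenced polynomial sequence $g(n+h)^{-1}g(n)$ requires choosing the right notion of "type" or "weight vector" on polynomial sequences in nilpotent groups, so that each van der Corput step strictly decreases the type while keeping the sequence polynomial. The noncommutativity of $G$ complicates the differencing, and one has to verify that commutator terms that arise lie in deeper subgroups of the central series, thereby contributing terms of lower nilpotency step that fall under the outer induction hypothesis. Once this bookkeeping is set up, gluing the base-case convergence with the fiber-wise vanishing and summing over the (finitely many relevant) Fourier modes completes the proof.
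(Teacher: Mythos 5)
The paper does not actually prove this statement: it is Leibman's theorem, quoted from \cite{Leibman} and used as a black box in Section \ref{sec:CVinZ2}, so there is no internal proof to compare against. Judging your sketch on its own merits, the overall architecture (reduce to connected, simply connected $G$; induct on the nilpotency step; decompose $F$ into vertical Fourier modes along the central fiber; treat the trivial mode by induction on the base and the nontrivial modes by van der Corput) is a recognized strategy for equidistribution on nilmanifolds, but it contains a genuine gap at the decisive step.

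The gap: you claim that for each nontrivial vertical character $\chi$ the Ces\`aro averages of $F_\chi(g(n)x)$ vanish, for every $x$. This is false in general. The theorem is asserted for \emph{every} $x \in X$ and \emph{every} polynomial sequence, including degenerate ones; the orbit closure of $\{g(n)x\}$ is typically a proper sub-nilmanifold (or a finite union of sub-nilmanifolds visited periodically), and the limit of the averages is the integral of $F$ over the appropriate component of that closure --- which has no reason to annihilate a nontrivial character of the ambient central fiber. (Take $g(n)$ the constant sequence, or a rational translation acting on a rational point: the averages converge to a nonzero value for many $F_\chi$ with $\chi \neq 0$.) A van der Corput/PET argument of the kind you describe yields vanishing only under an additional total-equidistribution (irrationality) hypothesis on the horizontal projection of $g(n)$; the whole difficulty of the everywhere-convergence theorem is the case analysis when some horizontal character is resonant, in which case one must identify the sub-nilmanifold generated by the orbit and restart the argument there. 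That reduction --- proving the orbit closure is a sub-nilmanifold and that the sequence is well-distributed on it --- is the heart of Leibman's proof and is absent from your sketch. Leibman also reaches the polynomial case by a different route: using the Hall--Petresco structure of the group of polynomial sequences, he realizes $g(n)x$ as a \emph{linear} orbit on a larger nilmanifold and then proves equidistribution of linear orbits in their closures; your direct PET differencing is closer to the Bergelson--Leibman $L^2$ arguments and would additionally require the complexity bookkeeping you acknowledge but do not carry out.
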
 

\begin{theorem}\label{BothInZ2}
Let $(X, \mathcal{F}, \mu, T)$ be an ergodic dynamical system. Suppose $f_1, f_2 \in \mathcal{Z}_2$ are both continuous functions on $X$. Then the average
\[ \frac{1}{N} \sum_{n=1}^N f_1(T^{an}x)f_2(T^{bn}x)e^{2\pi i n t} \]
converges, as $N \to \infty$, off of a single null-set that is independent of $t$.
\end{theorem}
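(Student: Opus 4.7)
The plan is to exploit the inverse limit structure of $\mathcal{Z}_2$ furnished by Theorem \ref{CL-nil} and reduce the statement to an instance of Leibman's Theorem \ref{Leibman} on a 2-step nilmanifold augmented by a torus factor that absorbs the modulation $e^{2\pi i n t}$. First, since each $f_i$ is $\mathcal{Z}_2$-measurable, I would realize $f_i$ as a function on the inverse limit of 2-step nilsystems $(X_k, \nu_k, T_k)$ with factor maps $\pi_k : X \to X_k$. Continuous functions of the form $\tilde{f}_{i,k} \circ \pi_k$, with $\tilde{f}_{i,k}$ continuous on the finite-stage nilfactor $X_k$, are $L^2$-dense in $L^2(\mathcal{Z}_2)$; I would pick approximants with $\|f_i - \tilde{f}_{i,k} \circ \pi_k\|_2 \leq 2^{-k}$ for $i = 1, 2$.

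Next, to place the modulated average inside Leibman's framework, write $X_k = G_k/\Lambda_k$ with $T_k$ left translation by $a_k \in G_k$, and form the 2-step nilpotent Lie group $G_k' = G_k \times G_k \times \RR$ with discrete cocompact subgroup $\Lambda_k' = \Lambda_k \times \Lambda_k \times \ZZ$, so that $G_k'/\Lambda_k' \cong X_k \times X_k \times \mathbb{T}$ is itself a 2-step nilmanifold. For each fixed $t \in \RR$, the sequence $g_t(n) = (a_k^{an}, a_k^{bn}, tn)$ is polynomial in $G_k'$, and $F(y_1, y_2, s) = \tilde{f}_{1,k}(y_1)\,\tilde{f}_{2,k}(y_2)\,e^{2\pi i s}$ is continuous on the quotient. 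Applying Theorem \ref{Leibman} at the base point $(\pi_k(x), \pi_k(x), 0)$ yields, for every $x \in X$ and every $t \in \RR$, convergence of
\[
\frac{1}{N}\sum_{n=1}^N \tilde{f}_{1,k}(T_k^{an}\pi_k(x))\, \tilde{f}_{2,k}(T_k^{bn}\pi_k(x))\, e^{2\pi i n t}.
\]

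To pass from the approximants to $f_1, f_2$ off a single null set, I would invoke the $t$-uniform Cauchy--Schwarz bound
\[
|W_N(g, h, x, t)|^2 \leq \left(\frac{1}{N}\sum_{n=1}^N |g(T^{an}x)|^2\right) \left(\frac{1}{N}\sum_{n=1}^N |h(T^{bn}x)|^2\right),
\]
valid because $|e^{2\pi i n t}| = 1$. The pointwise ergodic theorem for the (possibly non-ergodic) measure-preserving transformations $T^a, T^b$ shows the right side converges a.e.\ to $\expec_a(|g|^2)(x)\,\expec_b(|h|^2)(x)$, where $\expec_a$ denotes conditional expectation onto the $T^a$-invariant $\sigma$-algebra. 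Applying this with $g = f_1 - \tilde{f}_{1,k}\circ\pi_k$ and $h = f_2$ (and symmetrically in the second slot), the summability $\sum_k \|f_i - \tilde{f}_{i,k}\circ\pi_k\|_2^2 < \infty$ together with Borel--Cantelli supplies a single null set $N_0$ outside of which $\expec_a(|f_i - \tilde{f}_{i,k}\circ\pi_k|^2)(x) \to 0$ as $k \to \infty$. For $x \notin N_0$ and every $t$, the everywhere convergence of the approximating averages combined with a standard Cauchy argument in $N$ forces $W_N(f_1, f_2, x, t)$ to converge, giving the $t$-independent null set claimed.

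The main obstacle I anticipate is accommodating the modulation $e^{2\pi i n t}$ within Leibman's polynomial framework while keeping the exceptional null set $t$-independent. The product nilmanifold $X_k \times X_k \times \mathbb{T}$ resolves the former by realizing the rotation as an abelian factor of a larger 2-step nilmanifold, and the Cauchy--Schwarz bound above makes the approximation error dominated by a $t$-free quantity, so the null set arises purely from the pointwise ergodic theorem and Borel--Cantelli, never from a supremum over $t$.
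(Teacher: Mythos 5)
Your proposal is correct, but it follows a genuinely different route from the paper's. The paper splits into two cases according to whether $t$ is rational or irrational: for rational $t$ it passes to the product system $X\times\mathbb{T}$ with the rotation by $e^{2\pi i t}$ and invokes Bourgain's double recurrence theorem, taking a countable union of null sets over $t\in\QQ$; for irrational $t$ it builds a function $f_t=\bar{\phi}_t\circ\pi_1$ satisfying $f_t(T^n x)=f_t(x)e^{2\pi i n t}$ by extending the character $\beta\mapsto e^{2\pi i t}$ from the dense cyclic subgroup $\langle\beta\rangle$ of the Kronecker factor $Z_1$ to all of $Z_1$, and then applies Leibman's theorem to $f_1\otimes f_2\otimes f_t$ on $X^3$ along the polynomial sequence $g_1^{an}g_2^{bn}g_3^{n}$, obtaining convergence at every point. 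You instead absorb the modulation into an extra circle coordinate of the nilmanifold itself, via the polynomial sequence $n\mapsto tn$ in the $\RR$-factor of $G_k\times G_k\times\RR$; this handles all $t$ at once, needs no case distinction, and avoids the delicate step of extending $\phi_t$ continuously from $\langle\beta\rangle$ to $Z_1$ (a step whose validity for arbitrary irrational $t$ is not evident, since $\lim_k e^{2\pi i n_k t}$ need not exist along every sequence with $\beta^{n_k}\to z$). Your argument is also more explicit about the inverse-limit structure of $\mathcal{Z}_2$, which the paper elides by writing ``without loss of generality let $X=Z_2$'' and treating $Z_2$ as a single nilsystem, and your approximation scheme in fact extends the conclusion from continuous functions to all of $L^\infty(\mathcal{Z}_2)$. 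Two small points to tighten: arrange $\|\tilde{f}_{i,k}\|_\infty\le\|f_i\|_\infty$ (truncating if necessary) so the cross term $W_N(\tilde{f}_{1,k}\circ\pi_k,\, f_2-\tilde{f}_{2,k}\circ\pi_k,\, x, t)$ is controlled uniformly in $k$, and note that the a.e.\ convergence $\expec_a(|f_i-\tilde{f}_{i,k}\circ\pi_k|^2)(x)\to 0$ follows from $\sum_k\|f_i-\tilde{f}_{i,k}\circ\pi_k\|_2^2<\infty$ by monotone convergence rather than Borel--Cantelli proper; the conclusion is unaffected.
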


\begin{proof}
In this proof, we will consider two cases: The case when $t$ is rational, and the case when $t$ is irrational.\\[5pt]
\textbf{Case I: When $t$ is rational.} 
Fix $t \in \QQ$. Let $S_t$ be a rotation on $\mathbb{T}$ by $e^{2\pi i t}$. Let $(X \times \mathbb{T}, \mu \otimes m, U)$ be a measure preserving system, where $m$ is the Lebesgue measure on $\mathbb{T}$, and $U = T \otimes S_t$. Define $F_1(x, y) = f_1(x)e^{2\pi i \alpha_1y}$, and $F_2(x, y) = f_2(x)e^{2\pi i \alpha_2y}$, where $\alpha_1, \alpha_2 \in \RR$ such that $\alpha_1 a + \alpha_2 b = 1$. Then
\begin{equation}\label{rational} \frac{1}{N} \sum_{n=1}^N F_1(U^{an}(x, y))F_2(U^{bn}(x, y)) = \frac{e^{2\pi i y}}{N} \sum_{n=1}^N f_1(T^{an}x)f_2(T^{bn})e^{2\pi i n t}\end{equation}
Note that the average on the left hand side of $(\ref{rational})$ converges $\mu \otimes m$-a.e. as $N \to \infty$ by Bourgain's double recurrence theorem \cite{BourgainDR}. So there exists a set of full measure $V_t \subset X \times \mathbb{T}$ such that the average in $(\ref{rational})$ converges for all $(x, y) \in V_t$. If $V = \bigcup_{t \in \QQ} V_t$, then $V$ is a set of full measures such that the average on $(\ref{rational})$ converges for all $(x, y) \in V$ for all $t \in \QQ$. This implies that the claim holds for $\mu$-a.e. $x \in X$ when $t \in \QQ$. \\[5pt]
\textbf{Case II: When $t$ is irrational.}
Without loss of generality, we let $X = Z_2$, the Conze-Lesigne system. Let $\beta \in Z_1$ is an element such that for any $(z, u) \in Z_1 \times U = X$, $T(z, u) = (\beta z, u\rho(z))$. In other words, $T$ acts on $Z_1$ as a rotation by $\beta$ (here, we let $Z_1$ be a multiplicative abelian group). Then note that $B = \la \beta \ra$, the cyclic subgroup generated by $\beta$, is dense in the Kronecker factor $Z_1$. Define a character $\phi_t: B \to \mathbb{T}$ such that $\phi_t(\beta) = e^{2\pi i t}$. Such group homomorphism exists since $t$ is irrational, and $\la e^{2\pi i t} \ra$ generates a dense cyclic subgroup in $\mathbb{T}$. \\[5pt]
We claim that there exists a multiplicative character $\bar{\phi_t}: Z_1 \to \mathbb{T}$ such that $\bar{\phi_t}|_{B} = \phi_t$. Since $B$ is dense in $Z_1$, for any $z \in Z_1$, there exists a sequence $(\beta^{n_k})_k$ such that $\lim_{k \to \infty} \beta^{n_k} = z$. So we define
\[ \bar{\phi_t}(z) = \lim_{k \to \infty} \phi_t(\beta)^{n_k}. \]
We must show that this limit converges, which would show that $\overline{\phi}_t$ is well-defined by the continuity of $\phi$. Note that $\mathbb{T}$ is compact, so there exists a converging subsequence $(\phi_t(\beta)^{n_{k_l}}) \in \mathbb{T}$ such that $\lim_{l \to \infty} \phi_t(\beta)^{n_{k_l}} = \gamma$ for some $\gamma \in \mathbb{T}$. We will show that $\lim_{k \to \infty} \phi_t(\beta)^{n_k} = \gamma$. 
Assuming on the contrary, suppose that there exists a subsequence $(\phi_t(\beta)^{n_{k_m}})_m$ such that $|\phi_t(\beta)^{n_{k_m}} - \gamma| > \epsilon$ for all $m \in \NN$. This implies that, for sufficiently large $l$, we have $|\phi_t(\beta)^{n_{k_m}} - \phi_t(\beta)^{n_{k_l}} | > \epsilon / 2$. This however contradicts the continuity of $\phi_t$, since if $d_{Z_1}$ is the metric on $Z_1$, then $d_{Z_1}(\beta^{n_{k_l}}, \beta^{n_{k_m}}) \to 0$ as $l, m \to \infty$, because both $\beta^{n_{k_l}}$ and $\beta^{n_{k_m}}$ converges to the same limit $z$. This proves that $\bar{\phi_t}$ is well-defined for all $z \in Z_1$. The fact that $\bar{\phi_t}$ is a multiplicative character is obvious from the way $\bar{\phi_t}$ is defined in terms of $\phi_t$.\\[5pt]
We define a continuous function $f_t := \bar{\phi}_t \circ \pi_1$, where $\pi_1: Z_2 \to Z_1$ is the factor map. We note that
\[ f_t(T^nx) = \bar{\phi}_t (\pi_1(T^nx)) = \bar{\phi}_t (\pi_1(x)\beta^n) = f_t(x)\phi_t(\beta)^n = f_t(x)e^{2\pi i n t}. \]
Therefore,
\[ \frac{1}{N} \sum_{n=1}^N f_1(T^{an}x)f_2(T^{bn}x)f_t(T^nx) = \frac{f_t(x)}{N} \sum_{n=1}^N f_1(T^{an}x)f_2(T^{bn}x)e^{2\pi i n t}. \]
To show the convergence of this average, let $F(x_1, x_2, x_3) = f_1(x_1)f_2(x_2)f_t(x_3)$ be a function on $X^3 = G^3/\Lambda^3$. Let $T_1 = T \times \Id \times \Id$, $T_2 = \Id \times T \times \Id$, and $T_3 = \Id \times \Id \times T$. Note that an action of $T_1$ on $X^3$ corresponds to $g_1 = ((\beta, \rho), e, e) \in G^3$ (where $e$ is the identity element of $G$), and similarly, $T_2$ corresponds to $g_2 = (e, (\beta, \rho), e) \in G^3$, and $T_3$ corresponds to $g_3 = (e, e, (\beta, \rho)) \in G^3$. Thus,
\[ g(n) = g_1^{an} g_2^{bn} g_3^{n} \]
is a polynomial sequence. Furthermore, if $\vec{x} = (x, x, x) \in X^3$, then
\[ \frac{1}{N} \sum_{n=1}^N F(g(n)\vec{x}) = \frac{1}{N} \sum_{n=1}^N f_1(T^{an}x)f_2(T^{bn}x)f_t(T^nx) \]
converges by theorem $\ref{Leibman}$. 
\end{proof}

\section*{Appendix}
Here we provide the proofs of the inequalities mentioned in Section $\ref{sec:Inequalities}$.

\begin{proof}[Proof of Lemma $\ref{lem-vdc}$]
One can find the proof of van der Corput's lemma in many different sources; see \cite{KuipersNiederreiter} for example.
\end{proof}
\begin{proof}[Proof of Lemma $\ref{lem-vcd-ww}$]
To show $(\ref{vcd-lim})$, we take the limit supremum (as $N \to \infty$) on both sides of $(\ref{vdc})$. Then we obtain
\[ \limsup_{N \to \infty} \left| \frac{1}{N} \sum_{n=0}^{N-1} a_n \right|^2 \leq \frac{1}{H} + \frac{2}{(H+1)^2} \sum_{h=1}^H (H + 1 - h) \Re \left( \limsup_{N \to \infty} \frac{1}{N} \sum_{n=0}^{N - h - 1} a_n \overline{a}_{n+h} \right) \]
Let $u_n$ be another sequence of complex numbers norms bounded by $1$. Then, for fixed $h$, we have
\[ \frac{1}{N} \sum_{n=0}^{N - h - 1} u_n =  \frac{1}{N} \sum_{n=0}^{N} u_n - \frac{1}{N} \sum_{n=N-h}^{N} u_n. \]
Since $|u_n| \leq 1$, we know that for fixed $h$,
\[ \limsup_{N \to \infty} \left| \frac{1}{N} \sum_{n=N-h}^{N} u_n \right| \leq \limsup_{N \to \infty} \frac{h}{N} = 0. \]
Therefore, 
\begin{equation}\label{sameLimit} \limsup_{N \to \infty} \frac{1}{N} \sum_{n=0}^{N - h - 1} u_n = \limsup_{N \to \infty} \frac{1}{N} \sum_{n=0}^{N} u_n \end{equation}
Now apply $(\ref{sameLimit})$ to $u_n = a_n \overline{a}_{n+h}$, we obtain
\[\limsup_{N \to \infty} \left| \frac{1}{N} \sum_{n=0}^{N-1} a_n \right|^2 \leq \frac{1}{H} + \frac{2}{(H+1)^2} \sum_{h=1}^H (H + 1 - h) \Re \left( \limsup_{N \to \infty} \frac{1}{N} \sum_{n=0}^{N} a_n \overline{a}_{n+h} \right), \]
so set $C > 2$, and the claim holds.

To show $(\ref{vcd-ww})$, we recall that corollary 2.1 of \cite{AssaniWWET} states that
\[ \sup_{t \in \RR} \left| \frac{1}{N} \sum_{n=1}^N a_n e^{2\pi i n t} \right|^2 \leq \frac{2}{NH} \sum_{n=1}^{N} |a_n|^2 + \frac{4}{H} \sum_{h=1}^H \left| \frac{1}{N} \sum_{n=1}^N a_n \overline{a}_{n+h} \right|. \]
Since $\sup_n |a_n|^2 \leq 1$, we have
\[ \frac{2}{NH} \sum_{n=1}^{N} |a_n|^2 \leq \frac{2}{H}. \]
Choose $C > 4$, and we obtain the desired inequality.

To show $(\ref{vcd-ww-lim})$, we apply limit supremum (as $N \to \infty$) to both sides of $(\ref{vcd-ww})$, which gives us
\[ \limsup_{N \to \infty} \sup_{t \in \RR} \left| \frac{1}{N} \sum_{n=1}^N a_ne^{2\pi i n t} \right|^2 \leq \frac{C}{H} + \frac{C}{H} \sum_{h=1}^H \limsup_{N \to \infty}\left| \frac{1}{N} \sum_{n-1}^{N} a_n \overline{a}_{n+h} \right|.  \]
We apply $(\ref{sameLimit})$ to $u_n = a_n \overline{a}_{n+h}$, and we obtain the desired inequality.
\end{proof}

\begin{proof}[Proof of Lemma $\ref{lem-revFatou}$]
Note that
\[ \limsup_{n \to \infty} f_n = \inf_{k \in \NN} \sup_{n \geq k} f_n. \]
So if we set $\displaystyle g_k = \sup_{n \geq k} f_n$, and since $g_k$ is decreasing, its limit exists pointwise (i.e. $\displaystyle \limsup_n f_n$), and $\displaystyle g_1 = \sup_{n \geq 1} f_n \leq F$, we apply the dominated convergence theorem to obtain the following:
\[ \int \limsup_{n \to \infty} f_n \, d\mu = \lim_{k \to \infty} \int g_k \, d\mu = \lim_{k \to \infty} \int \sup_{n \geq k} f_n \, d\mu.\]
Of course, $\displaystyle f_i \leq \sup_{n \geq k} f_n$ for all $i \geq k$, we know that $\displaystyle \int f_i d\mu \leq \int \sup_{n \geq k} f_n d\mu$. So in particular, $\displaystyle \sup_{n \geq k} \int f_n \leq \int \sup_{n \geq k} f_n$. Hence,
\[ \int \limsup_{n \to \infty} f_n \, d\mu  \geq \lim_{k \to \infty} \sup_{n \geq k} \int f_n = \limsup_{n \to \infty} \int f_n \, d\mu\]
\end{proof}

\begin{proof}[Proof of Lemma $\ref{lem-cubes-variant}$]
This proof is a small modification of the proof provided in Lemma 5 of \cite{AssaniCubes}. By the Cauchy-Schwarz inequality, we have
\begin{align*}
& \left| \frac{1}{H(H+1)^2} \sum_{h, k = 0}^{H-1} (H + 1 - k) a_h b_k c_{h+k} \right|^2 \\
&\leq \| a \|_\infty^2 \left( \frac{1}{H} \sum_{h=0}^{H-1} \left| \frac{1}{(H+1)^2} \sum_{k=0}^{H-1}(H + 1 - k) b_k c_{h + k} \right|^2 \right).
\end{align*}
Set $\displaystyle B_k =  b_k\frac{(H + 1 - k)}{H + 1}$, and the inequality above becomes
\begin{align*}
& \left| \frac{1}{H(H+1)^2} \sum_{h, k = 0}^{H-1} (H + 1 - k) a_h b_k c_{h+k} \right|^2 \\
&\leq \| a \|_\infty^2 \left( \frac{1}{H} \sum_{h=0}^{H-1} \left| \frac{1}{H+1} \sum_{k=0}^{H-1} B_k c_{h+k} \right|^2 \right) \\
& \leq \| a \|_\infty^2 \frac{1}{H} \sum_{h=0}^{H-1} \left| \int \left( \sum_{k = 0}^{H-1} B_k e^{-2\pi i k t} \right) \left(\frac{1}{H+1} \sum_{k=0}^{2(H-1)} c_k e^{2\pi i k t} \right) e^{-2\pi i h t} dt \right|^2.
\end{align*}
We apply Parseval's inequality to the integral above to obtain
\begin{align*}
& \left| \frac{1}{H(H+1)^2} \sum_{h, k = 0}^{H-1} (H + 1 - k) a_h b_k c_{h+k} \right|^2 \\
&\leq \| a \|_\infty^2 \frac{1}{H} \sum_{h=0}^{H-1} \int \left| \sum_{k=0}^{H-1} B_k e^{-2\pi i k t} \right|^2 \left| \frac{1}{H+1} \sum_{k'=0}^{2(H-1)} c_{k'} e^{2\pi i k' t} \right|^2 dt \\
&\leq \| a \|_\infty^2  \sup_{t \in \RR} \left| \frac{1}{H+1} \sum_{k=0}^{2(H-1)} c_k e^{2\pi i k t} \right|^2 \frac{1}{H} \sum_{h=0}^{H-1}\int \left| \sum_{k=0}^{H-1} B_k e^{-2\pi int} \right|^2 dt \\
&\leq \|a \|_\infty^2 \sup_{t \in \RR} \left| \frac{1}{H+1} \sum_{k=0}^{2(H-1)} c_k e^{2\pi i k t} \right|^2 \frac{1}{H+1} \sum_{h=0}^{H-1} |B_k|^2.
\end{align*}
Since $|B_k| < 1$, we know that $\displaystyle \frac{1}{H+1} \sum_{h=0}^{H-1} |B_k|^2 \leq 1$. Thus, $(\ref{cubes-variant})$ holds.
\end{proof}

\textbf{Remark:} The first and the third authors are currently preparing the extension of theorem $\ref{MainResult}$ to show that the sequence $u_n = f_1(T^{an}x)f_2(T^{bn}x)$ is a good universal weight for the pointwise ergodic theorem \cite{AssaniMooreUnivWeight}.
\bibliography{WWDR_preprint}
\bibliographystyle{plain}

\end{document}